\theoremstyle{plain}
\newtheorem{theorem}{Theorem}[section]
\newtheorem{corollary}[theorem]{Corollary}
\newtheorem{lemma}[theorem]{Lemma}
\newtheorem{fact}[theorem]{Fact}
\newtheorem{proposition}[theorem]{Proposition}
\newtheorem{claim}{Claim}
\newtheorem*{theorem*}{Theorem}
\newtheorem*{corollary*}{Corollary}
\newtheorem*{lemma*}{Lemma}
\newtheorem*{fact*}{Fact}
\newtheorem*{proposition*}{Proposition}
\theoremstyle{definition}
\newtheorem{definition}[theorem]{Definition}
\newcommand{\countlogic}{\mathcal{L}_{\omega_1, \omega}}
\newcommand{\isom}{\cong}
\newcommand{\mcA}{\mathcal{A}}
\newcommand{\mcB}{\mathcal{B}}
\newcommand{\mcC}{\mathcal{C}}
\newcommand{\mcD}{\mathcal{D}}
\newcommand{\mcF}{\mathcal{F}}
\newcommand{\mcH}{\mathcal{H}}
\newcommand{\mcI}{\mathcal{I}}
\newcommand{\mcK}{\mathcal{K}}
\newcommand{\mcL}{\mathcal{L}}
\newcommand{\mcM}{\mathcal{M}}
\newcommand{\mcN}{\mathcal{N}}
\newcommand{\oc}{\mathbf{c}}
\newcommand{\BPi}{\mathbf{\Pi}}
\newcommand{\inj}{\text{inj}}
\newcommand{\fin}{\text{fin}}
\DeclareMathOperator{\Aut}{Aut}
\DeclareMathOperator{\supp}{supp}
\DeclareMathOperator{\cl}{cl}
\DeclareMathOperator{\clmin}{Kcl}
\DeclareMathOperator{\Stab}{Stab}
\DeclareMathOperator{\Drk}{Drk}
\DeclareMathOperator{\Dcl}{Dcl}
\DeclareMathOperator{\Krk}{Krk}
\newcommand{\forkindep}[1][]{%
  \mathrel{
    \mathop{
      \vcenter{
        \hbox{\oalign{\noalign{\kern-.3ex}\hfil$\vert$\hfil\cr
              \noalign{\kern-.7ex}
              $\smile$\cr\noalign{\kern-.3ex}}}
      }
    }\displaylimits_{#1}
  }
}
\begin{document}
\title{Classification strength of Polish groups: involving $S_\infty$}
\date{\today}
\author{Shaun Allison}
\address{Department of Mathematics, University of Toronto}

\subjclass[2020]{Primary 54H05, 37B02, 54H11; Secondary 03E15, 03C15}

\keywords{Polish group, Borel reduction, Knight's model, countable model, Fra\"{i}ss\'{e} limit}

\begin{abstract}
    In recent years, much work has been done to measure and compare the complexity of orbit equivalence relations, especially for certain classes of Polish groups.
    We start by introducing some language to organize this previous work, namely the notion of \textbf{classification strength} of Polish groups.
    Broadly speaking, a Polish group $G$ has stronger classification strength than $H$ if every orbit equivalence relation induced by a continuous action of $H$ on a Polish space can be ``emulated" by such an action of $G$ in the sense of Borel reduction.

    Among the non-Archimedean Polish groups, the groups with the highest classification strength are those that involve $S_\infty$, the Polish group of permutations of a countably-infinite set.
    We prove that several properties, including a weakening of the disjoint amalgamation in Fra\"{i}ss\'{e} theory, a weakening of the existence of an absolute set of generating indiscernibles, and not having ordinal rank in a particular coanalytic rank function, are all equivalent to a non-Archimedean Polish group involving $S_\infty$.
    Furthermore, we show the equivalence relation $=^+$, which is a relatively simple benchmark equivalence relation in the theory of Borel reducibility, can only be classified by such groups that involve $S_\infty$.
\end{abstract}

\maketitle

\section{Introduction}
Invariant descriptive set theory is concerned with definable equivalence relations and definable reductions between them.
In particular, we usually consider equivalence relations living on Polish spaces, where the equivalence relations are \emph{analytic}, i.e. analytic as a subset of the product space.
The definable reductions that we consider are usually the \emph{Borel} ones.
Given analytic equivalence relations $E$ and $F$ living on Polish spaces $X$ and $Y$ respectively, a \emph{Borel reduction} from $E$ to $F$ is a Borel function $f : X \rightarrow Y$ satisfying $x \mathrel{E} x'$ iff $f(x) \mathrel{F} f(x')$ for every $x, x' \in X$.
When such a reduction exists, we say that $E$ is \emph{Borel-reducible} to $F$, i.e. $E \le_B F$.
In the case that $E$ and $F$ represent classification problems in math, such as isomorphism of graphs or conjugacy of measure-preserving translations, then from $E \le_B F$ we can conclude that the problem represented by $E$ is no harder than the one represented by $F$.

One common way to show that $E$ does \emph{not} Borel reduce to $F$ is by showing that $E$ is \emph{generically-ergodic} with respect to $F$.
A function $f : X \rightarrow Y$ is a homomorphism from $E$ to $F$ iff for every $x, y \in X$, if $x \mathrel{E} y$ then $f(x) \mathrel{F} f(y)$.
Then we say that $E$ is generically-ergodic with respect to $F$ iff for every Borel homomorphism $f : X \rightarrow Y$ from $E$ to $F$, there is a comeager $C \subseteq X$ such that for any $x, y \in C$ we have $f(x) \mathrel{F} f(y)$.
In the case that $E$ does not have a comeager class, this precludes the existence of a Borel reduction to $F$.

While Borel reducibility yields a relational measure of complexity between different equivalence relations, there are also absolute measures of complexity that place equivalence relations in a neatly-organized ``hierarchy''.
One very successful program along these lines has been, in the case of Borel equivalence relations, to consider their place in the Borel hierarchy via a notion of \emph{potential Borel complexity}.
This was essentially initiated in the seminal \cite{HKL1998}, where a connection to potential Borel complexity was made to the set-theoretic complexity of any definable assignment of invariants.
In particular, in the large class of equivalence relations that they consider, an equivalence relation is potentially $\BPi^0_2$ iff there is a Borel assignment of reals as invariants, potentially $\BPi^0_3$ iff there is a Borel assignment of sets of reals as invariants, potentially $\BPi^0_4$ iff there is a Borel assignment of sets of sets of reals as invariants, and so on.

The case of \textbf{orbit equivalence relations} gives another opportunity to produce a meaningful hierarchy of complexity, by instead studying the acting (``classifying") groups.
Many definable equivalence relations that arise in practice are actually orbit equivalence relations.
Given a Polish group $G$, a \emph{Polish $G$-space} is a Polish space $X$ along with a continuous action $G \curvearrowright X$.
We use $E^G_X$ to denote the induced orbit equivalence relation, which is analytic, and moreover has the property that every orbit is Borel, see \cite[2.3.4]{BeckerKechris1996}.

Formally, given an equivalence relation $E$ on a Polish space $X$, we say that a Polish group $G$ \textbf{classifies} $E$ iff there is a Polish $G$-space $Y$ such that $E \le_B E^G_Y$.
This notion can be used to separate equivalence relations up to Borel reduction.
We identify three examples of this phenomenon of particular importance.

For the first example, we say that an equivalence relation $E$ is \emph{classifiable by countable structures} iff it is classifiable by $S_\infty$, the Polish group of automorphisms of a countably-infinite set with the natural topology.

An important benchmark equivalence relation is $=^+$, the Friedman-Stanley jump of equality, which has particular importance in the study of potential Borel complexity in \cite{HKL1998}.
It is defined on $\mathbb{R}^\omega$ by 
\[(x_n) \mathrel{=^+} (y_n) \quad \text{iff} \quad \{x_n \mid n \in \omega\} = \{y_n \mid n \in \omega\}.\]
This is easily seen to be classifiable by countable structures.

Hjorth showed that any orbit equivalence relation that is generically-ergodic with respect to $=^+$ is also generically-ergodic with respect to any action of $S_\infty$.
If the orbit equivalence relation moreover does not have a comeager orbit then it is not classifiable by countable structures.
Hjorth also isolated a dynamical property of some orbit equivalence relations called \emph{turbulence}, which precludes them from being classifiable by countable structures (see \cite{HjorthBook} and \cite{Hjorth2002Turbulence}).

For the second example, we observe that further separations can be found among the orbit equivalence relations classifiable by countable structures.
Recall that a Polish group is \emph{complete left-invariant} (or cli) iff it has a complete metric $d_G$ which is compatible in the sense that it generates the topology, and furthermore is left-invariant, i.e.
\[d_G(g, g') = d_G(hg, hg')\]
for every group elements $g, g', h$.
In \cite{Hjorth1999}, Hjorth identifies a metamathematical property of equivalence relations that precludes them from being classifiable by cli Polish groups, and shows that $=^+$ exhibits this property.
This property was further explored and dubbed ``unpinned" in \cite{Kanovei2008}.

For the third example, we recall a stronger invariance notion than a Polish group being cli is being \emph{two-sided invariant} (or tsi).
A Polish group is tsi iff there is a compatible complete metric $d_G$ satisfying
\[ d_G(g, g') = d_G(hg, hg') = d_G(gh, g'h)\]
for every group elements $g, g', h$.

A natural example of an equivalence relation classifiable by a tsi Polish group is $E_\infty$, defined as the orbit equivalence relation induced by the Bernoulli-shift action of $F_2 \curvearrowright \mathbb{R}^{F_2}$, where $F_2$ is the free group on two generators with the discrete topology.
Since $F_2$ is a countable discrete group, it is tsi for trivial reasons.

In \cite{AllisonTSI}, it was shown that an orbit equivalence relation that is generically ergodic with respect to $E_\infty$ is generically ergodic with respect to any orbit equivalence relation induced by a tsi non-Archimedean Polish group.
This strongly parallels Hjorth's result relating generic ergodicity with respect to $=^+$ and orbit equivalence relations induced by any non-Archimedean Polish group.

In \cite{CC2022} Clemens-Coskey introduced an equivalence relation $E_0^{[\mathbb{Z}]}$, called the $\mathbb{Z}$-jump of $E_0$, which is classifiable by a cli Polish group.
They asked if it is in fact classifiable by a tsi Polish group.
However they showed that it is generically ergodic with respect to $E_\infty$ and furthermore has all meager classes, thus by \cite{AllisonTSI} it is not classifiable by a tsi non-Archimedean Polish group.

This was generalized by the author and Panagiotopoulos in \cite{AllisonPanagio2021} to general tsi Polish groups, and a purely dynamical property was identified similar to Hjorth's notion of turbulence which serves as an obstruction to classifiability by tsi Polish groups.
In particular, $E_0^{[\mathbb{Z}]}$ is not classifiable by \emph{any} tsi Polish group.
We will not need the definition of $E_0^{[\mathbb{Z}]}$ in this work, but its definition, as well as an exploration of an interesting hierarchy of similar equivalence relations, can be found in \cite{CC2022}.

We can observe that by considering properties of the classifying group we can now start to see that it produces a meaningful hierarchy of equivalence relations which are classifiable by countable structures:
\begin{align*}
    &\textbf{classifiable by non-Archimedean TSI}\\
    \subsetneq \; &\textbf{classifiable by non-Archimedean CLI} \\
    \subsetneq \;&\textbf{classifiable by } \mathbf{S_\infty}.
\end{align*}
In another upcoming paper \cite{AllisonPanagio2026}, we are showing with Panagiotopoulos that there is a finer hierarchy below classifiability by CLI, and in this paper we expose another hierarchy which exists above classifiability by CLI.
With this picture in mind, the following definition seems natural:

\begin{definition}
    Say that $G$ is \textbf{stronger in classification strength} than $H$ iff for every Polish $H$-space $X$, the orbit equivalence relation $E^H_X$ is classifiable by $G$.
\end{definition}

The following weak restatement of a result of Mackey and Hjorth further motivates this definition.
Recall that $G$ \emph{involves} $H$ iff there is a closed subgroup $G_0$ of $G$ and a continuous surjective homomorphism from $G_0$ onto $H$. Note that this is also sometimes said ``$H$ divides $G$".

\begin{lemma}[Mackey, Hjorth]\label{lem:mackey}
    If $G$ involves $H$, then $G$ is stronger in classification strength than $H$.
\end{lemma}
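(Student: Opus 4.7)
The plan is to decompose the statement along the definition of ``involves'': write $\pi : G_0 \twoheadrightarrow H$ for the continuous surjective homomorphism from the closed subgroup $G_0 \leq G$, and handle the surjective-quotient step and the closed-subgroup step separately. So given a Polish $H$-space $X$, I will first exhibit a Polish $G_0$-space $X_0$ with $E^H_X \leq_B E^{G_0}_{X_0}$, and then a Polish $G$-space $Y$ with $E^{G_0}_{X_0} \leq_B E^G_Y$; composing Borel reductions then yields the conclusion.

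The first step is essentially trivial and handles the quotient map $\pi$. I take $X_0 = X$ as a set and define a $G_0$-action by pullback, $g_0 \cdot_{G_0} x := \pi(g_0) \cdot_H x$. Continuity of this action is immediate from continuity of $\pi$ and of the $H$-action, so $X_0$ is a Polish $G_0$-space. Because $\pi$ is surjective, the $G_0$-orbit of $x$ equals its $H$-orbit, so $E^{G_0}_{X_0} = E^H_X$ as subsets of $X \times X$, and the identity map is a (continuous, hence Borel) reduction.

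The second step is the classical Mackey--Hjorth \emph{induced action} construction, and is where the real work is. Given the closed subgroup $G_0 \leq G$ and the Polish $G_0$-space $X_0$, I want to build a Polish $G$-space $Y$ together with a Borel map $\iota : X_0 \to Y$ which is a reduction of $E^{G_0}_{X_0}$ to $E^G_Y$. The model I have in mind is to form the ``balanced product'' $Y = G \times_{G_0} X_0$, i.e.\ the quotient of $G \times X_0$ by the right $G_0$-action $g_0 \cdot (g, x) = (g g_0^{-1}, g_0 \cdot x)$, equipped with the left $G$-action $g' \cdot [g, x] = [g' g, x]$, with $\iota(x) = [e_G, x]$. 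Equivalently (and often more convenient for checking Polishness), one realizes $Y$ as the space of $G_0$-equivariant Borel sections $f : G \to X_0$ satisfying $f(g g_0) = g_0^{-1} \cdot f(g)$, acted upon by left translation. On orbits it is straightforward: $[e, x] \sim_G [e, x']$ iff $x = g_0 \cdot x'$ for some $g_0 \in G_0$, which is exactly $x \mathrel{E^{G_0}_{X_0}} x'$.

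The main obstacle is topological: endowing $Y$ with a Polish topology making the $G$-action continuous, and making $\iota$ Borel. This is where one uses a Borel selector $s : G/G_0 \to G$ for the coset space (which exists by Kuratowski--Ryll-Nardzewski since $G_0$ is closed and hence $G/G_0$ is Polish or at least standard Borel), writes $Y$ as $G/G_0 \times X_0$ with the twisted action $g \cdot (aG_0, x) = \bigl(g a G_0,\, c(g, aG_0) \cdot x\bigr)$ for the cocycle $c(g, aG_0) = s(g a G_0)^{-1} g s(a G_0) \in G_0$, and then verifies continuity of the action after possibly refining the topology on $Y$ within the same Borel structure (using the Becker--Kechris topology-refinement technology for Polish group actions). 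Granting this construction, the embedding $x \mapsto (G_0, x)$ is continuous, and it is a reduction by the orbit computation above, completing the proof.
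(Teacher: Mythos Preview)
The paper does not prove this lemma; it is simply stated in the introduction as a known result attributed to Mackey and Hjorth, with no argument given. So there is nothing in the paper to compare your proof against.

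On its own merits your outline is the standard argument and is correct: factor ``involves'' into the continuous-surjection step (pull the $H$-action back along $\pi$; surjectivity of $\pi$ makes the $G_0$-orbits and $H$-orbits coincide, so the identity is a reduction) and the closed-subgroup step (form the co-induced $G$-space $G \times_{G_0} X_0$ and embed $X_0$ via $x \mapsto [e,x]$). You correctly isolate the one genuinely nontrivial point---putting a Polish topology on the co-induced space for which the $G$-action is continuous---and handle it via a Borel selector $s : G/G_0 \to G$, the resulting Borel cocycle, and the Becker--Kechris topology-refinement theorem for Borel $G$-spaces. This is exactly how the result is established in the standard references (e.g.\ Hjorth's monograph or Gao's textbook), so your ``granting this construction'' is an honest appeal to a known black box rather than a gap.

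One minor caution: your alternative realization of $Y$ as the space of $G_0$-equivariant maps $f : G \to X_0$ needs care when $G$ is uncountable, since $X_0^G$ is then not separable; the usual fix is to restrict to a fixed countable dense subset of $G$. But since you ultimately take the selector-plus-cocycle route and invoke Becker--Kechris, this does not affect the correctness of your argument.
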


The non-Archimedean Polish groups are exactly those that are isomorphic to closed subgroups of $S_\infty$.
Thus the non-Archimedean Polish groups which involve $S_\infty$ have maximal classification strength among the non-Archimedean Polish groups.
A result of Hjorth implies the converse \cite{Hjorth2001} (see also the more recent \cite{LU2021} which recovers this result using a different strategy.)

Hjorth's result gives a metamathematical sufficient condition for a non-Archimedean Polish group to involve $S_\infty$.
On the other hand, a paper of Baldwin-Friedman-Koerwien-Laskowski contains an argument that the automorphism group of the limit of any Fra\"{i}ss\'{e} class which satisfies disjoint amalgamation involves $S_\infty$.
However, not much else was known, and there had not yet been any comprehensive study of the division between the Polish groups which do and do not involve $S_\infty$.

In this paper, we identify several seemingly unconnected properties of non-Archimedean Polish groups, which are equivalent to involving $S_\infty$.
This tells us, we believe, that such groups have an interesting and deep structure and are worthy of further study.

Our main result is the following, where the terms mentioned in equivalences (2), (3), (4), and (5) are yet to be defined.

\begin{theorem}
Let $G = \Aut(\mcM)$ be a non-Archimedean Polish group for some countable structure $\mcM$ in a countable language $\mcL$. Then the following are equivalent:
\begin{enumerate}
    \item $G$ does not involve $S_\infty$;
    \item Every disjointifying closure operator on $M$ is trivial;
    \item $\Krk(\mcM) < \infty$;
    \item $\Krk(\mcM) < \omega_1$;
    \item There is no indiscernible support function on $\mcM$;
    \item $G$ does not classify $=^+$;
\end{enumerate}
\end{theorem}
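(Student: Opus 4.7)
The plan is to establish the cycle of implications
\[(6) \Rightarrow (1) \Rightarrow (2) \Rightarrow (3) \Rightarrow (4) \Rightarrow (5) \Rightarrow (6).\]
The direction $(6) \Rightarrow (1)$ is the easy half, and I would prove it by contrapositive from the Mackey--Hjorth lemma quoted above: if $G$ involves $S_\infty$, then $G$ is at least as strong in classification strength as $S_\infty$, and since the natural Bernoulli-shift action witnesses that $S_\infty$ classifies $=^+$, $G$ must classify $=^+$ as well.

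For $(1) \Rightarrow (2)$, I would also argue by contrapositive and follow the template of Baldwin--Friedman--Koerwien--Laskowski. Given a nontrivial disjointifying closure operator on $G = \Aut(\mcM)$, use it to carve out of $\Age(\mcM)$ (or a natural expansion) a Fra\"iss\'e subclass with disjoint amalgamation, whose Fra\"iss\'e limit sits naturally inside $\mcM$ and whose automorphism group is realized as a closed subgroup of $G$. Disjoint amalgamation then produces a continuous surjection from this closed subgroup onto $S_\infty$, witnessing involvement. The block $(2) \Rightarrow (3) \Rightarrow (4) \Rightarrow (5)$ I expect to be the rank-theoretic bookkeeping: $\Krk$ should be set up precisely as the stage-by-stage coanalytic rank of an iterated disjointifying closure process on $\mcM$, so triviality of all such closures collapses the rank to a finite value. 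The step $(3) \Rightarrow (4)$ is then either trivial by convention or follows from the standard coanalytic boundedness observation applied to the countable structure $\mcM$; and $(4) \Rightarrow (5)$ proceeds by contrapositive, using an indiscernible support function to manufacture, by iteration, witnesses forcing $\Krk(\mcM)$ past every countable ordinal.

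The main obstacle, and the heart of the theorem, is the implication $(5) \Rightarrow (6)$. Suppose for contradiction that $f : \R^\omega \to Y$ is a Borel reduction of $=^+$ into a Polish $G$-space $Y$. The strategy is to apply the Becker--Kechris toolkit --- Vaught transforms, change-of-topology, Kuratowski--Ulam --- together with the generic indiscernibility of sequences in $\R^\omega$ modulo $=^+$, in order to extract from $f$ a canonical assignment of finite supports in $\mcM$ to tuples in $\R^\omega$. The delicate point is to verify that enough of the symmetry of a generic indiscernible sequence of reals is inherited by this assignment so that, after passing to a suitable subsequence, one recovers an actual indiscernible support function on $\mcM$, contradicting (5). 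I expect this last step to require Hjorth's result that generic ergodicity with respect to $=^+$ transfers to every Polish $S_\infty$-action, which should be what upgrades a merely symmetric extraction into a fully indiscernible one.
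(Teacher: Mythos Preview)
Your cycle $(6)\Rightarrow(1)\Rightarrow(2)\Rightarrow(3)\Rightarrow(4)\Rightarrow(5)\Rightarrow(6)$ matches the paper's, and $(6)\Rightarrow(1)$ and the rank-theoretic middle are essentially as you describe. Two steps, however, are off.

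For $(1)\Rightarrow(2)$ you propose carving out a Fra\"iss\'e subclass of $\Age(\mcM)$ with \emph{disjoint} amalgamation. The paper does not do this, and in general no such subclass need exist. Instead it \emph{expands} $\Age(\mcM)$ by partial colorings $\oc:A\to\omega\cup\{\nll\}$ that are forced to be $\nll$ on $\cl$-closures, and takes as strong substructure the relation that respects both the coloring and the closure. The disjointifying property is exactly what makes \emph{ordinary} amalgamation hold for this expanded class: one first moves $\mcC$ to be $\cl$-independent from $\mcB$ over $\mcA$, and then the color extension is well-defined. The surjection onto $S_\infty$ then comes from reading off how automorphisms permute the colors of the limit. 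So the BFKL template is right in spirit, but the mechanism is coloring plus independence, not passing to a disjoint-amalgamation subclass.

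For $(5)\Rightarrow(6)$ your proposed toolkit is wrong. The paper does not use Vaught transforms or change-of-topology, and Hjorth's theorem that generic ergodicity with respect to $=^+$ transfers to all $S_\infty$-actions plays no role whatsoever --- that result points in the opposite direction (it obstructs reductions, it does not extract structure from them). What the paper actually does is: (i) replace $=^+$ by a Borel bi-reducible orbit equivalence relation $E^Q_Y$ where $Q$ has a concrete semidirect-product structure $Q=\chi[S_T]\ltimes N$; (ii) apply the orbit continuity lemma of Hjorth and Lupini--Panagiotopoulos to the given reduction, fixing a single generic $y_0$; (iii) define $\supp(A)$ as the minimal finite $u\subseteq T$ such that elements of $\Stab_{Q_0}(u)$ move $f(y)$ only within $\Stab_P(A)\cdot f(y)$ on a neighborhood of $y_0$; (iv) prove by a hands-on decomposition of permutations that supports are closed under intersection, hence a minimal one exists; and (v) prove indiscernibility by conjugating through $\chi(\sigma)$ for suitable finite-support $\sigma\in S_T$. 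None of this requires any black-box generic-ergodicity input; the indiscernibility comes directly from the transitivity of finite-support permutations on $T$ and the orbit continuity lemma.
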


This is the beginning of a larger project to understand the hierarchy of Polish groups not involving $S_\infty$ which arises from the rank function $\Krk$.

\subsection{Acknowledgements}
The author would like to thank Omer Ben-Neria, Clinton Conley, Aristotelis Panagiotopoulos, and Spencer Unger for many valuable conversations. This work was partially funded by the Israel Science Foundation (grant no. 1832/19).

\section{Preliminaries}

\subsection{Countable model theory}

We briefly review a few basic concepts and notation from the model theory of countable structures.
We will always use $\mcL$ to refer to a countable relational language.
Given an $\mcL$-structure $\mcM$, we will write $M$ to refer to the underlying set of $\mcM$.
If $a, a' \in M$ and $B \subseteq M$, we write $a \isom_B a'$ iff there is an automorphism $\pi$ of $\mcM$ satisfying $\pi(a) = a'$ and $\pi(b) = b$ for every $b \in B$.

We write $\mcN \preceq_{\mcL} \mcM$ to denote that $\mcN$ is an $\mcL$-substructure of $\mcM$, and we write $\mcN \preceq_{\mcL_{\omega_1, \omega}} \mcM$ iff $\mcN$ is an $\mcL_{\omega_1, \omega}$-elementary substructure of $\mcM$.
We write $\prec_{\mcL}$ for proper $\mcL$-substructure and respectively for $\prec_{\mcL_{\omega_1, \omega}}$.
The following characterization of $\mcL_{\omega_1, \omega}$-elementary substructure which is the ``correct'' analogue of the Tarski-Vaught test, is extremely useful yet seems to be neglected in the literature, so a proof is included.

\begin{lemma}\label{lem:elementary_substructure}
For any two countable $\mcL$-structures $\mcN$ and $\mcM$, we have $\mcN \prec_{\mcL_{\omega_1, \omega}} \mcM$ iff $\mcN$ is an $\mcL$-substructure of $\mcM$ and for every finite $B \subseteq N$ and $a \in M$, there is some $a' \in N$ such that $a' \isom_B a$.
\end{lemma}

\begin{proof}
We start with the forward direction, assuming $\mcN$ is an $\mcL_{\omega_1, \omega}$-elementary substructure of $\mcM$, and fixing finite $B \subseteq N$ and $a \in M$.
List the elements of $B$ as $b_1, \dots, b_n$ and let $\varphi(x_1, \dots, x_n, y)$ be the Scott sentence of $(\mcM, b_1, \dots, b_n, a)$.
For a review of the Scott analysis, please see e.g. \cite[Chapter 12]{Gao2008}.
Then $\mcM \models \exists y, \; \varphi(b_1, \dots, b_n, y)$ and so by $\mcL_{\omega_1, \omega}$-elementarity, we have $\mcN \models \exists y, \; \varphi(b_1, \dots, b_n, y)$.
Fix $a' \in N$ such that $\mcN \models \varphi(b_1, \dots, b_n, a')$.
Again by elementarity we have $\mcM \models \varphi(b_1, \dots, b_n, a')$.
As $\varphi$ is a Scott sentence and $\mcM$ is countable there is an automorphism $\pi$ of $\mcM$ such that $\pi(b_i) = b_i$ for all $i$ and $\pi(a) = a'$, as desired.

For the backwards direction, assume $\mcN$ is an $\mcL$-substructure of $\mcM$ and for every finite $B \subseteq N$ and $a\in M$ there is $a' \in N$ such that $a' \isom_B a$.
Let $\mcH$ be the set of all formulas $\varphi(x_1, \dots x_n)$ in $\mcL_{\omega_1, \omega}$ such that for every $a_1, \dots, a_n \in N$ we have $\mcN \models \varphi(a_1, \dots, a_n)$ iff $\mcM \models \varphi(a_1, \dots, a_n)$.
This set $\mcH$ contains all atomic $\mcL$-formulas since $\mcN$ is an $\mcL$-substructure of $\mcM$.
Moreover, it is clearly closed under infinite disjunction and conjunction and negation.
It suffices to show that it is closed under existential quantification, as closure under universal quantification will also follow immediately and this would imply that $\mcH$ contains all the $\mcL_{\omega_1, \omega}$ formulas.
Indeed, suppose $\varphi(x_1, \dots, x_n, y)$ is in $\mcH$ and fix any $a_1, \dots, a_n \in N$.
Then $\mcN \models \exists y, \varphi(a_1, \dots, a_n, y)$ if and only if there exists some $b \in N$ such that $\mcN \models \varphi(a_1, \dots, a_n, b)$ if and only if (by the fact that $\varphi$ is in $\mcH$) there exists some $b \in N$ such that $\mcM \models \varphi(a_1, \dots, a_n, b)$ if and only if (by the assumption) there exists some $b \in M$ such that $\mcM \models \varphi(a_1, \dots, a_n, b)$ if and only if $\mcM \models \exists y, \; \varphi(a_1, \dots, a_n, y)$.
\end{proof}

We say $\mcM$ is \emph{ultrahomogeneous} iff for any two tuples $\bar{a}$ and $\bar{b}$ from $M$, if they have the same quantifier-free type, then there is an automorphism sending one to the other.


\subsection{Non-Archimedean Polish groups}

A Polish group is called \emph{non-Archimedean} if it has a countable local basis of the identity of open subgroups.
The most important such group is $S_\infty$, the Polish group of permutations of a countably-infinite set (with the discrete topology), equipped with the pointwise convergence topology.
There are several useful equivalent formulations of a Polish group being non-Archimedean:

\begin{fact}\label{fact:nonarchimedean}
Let $G$ be a Polish group. The following are equivalent:
\begin{enumerate}
    \item $G$ is non-Archimedean;
    \item $G$ is isomorphic to $\Aut(\mcM)$ for a countable structure $\mcM$ in a countable language;
    \item $G$ is isomorphic to a closed subgroup of $S_\infty$;
    \item $G$ has a compatible left-invariant ultrametric;
    \item $G$ is involved by $S_\infty$.
\end{enumerate}
\end{fact}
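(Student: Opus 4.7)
The plan is to prove $(2) \Leftrightarrow (3) \Leftrightarrow (1)$ first, and then handle $(1) \Leftrightarrow (4)$ separately. The implication $(2) \Rightarrow (3)$ is immediate since $\Aut(\mcM)$ is a closed subgroup of $\mathrm{Sym}(M) \cong S_\infty$ whenever $M$ is countably infinite. For $(3) \Rightarrow (2)$, given a closed subgroup $G \le S_\infty$, I would take $\mcM$ to be the canonical structure on $\omega$ whose relations are indexed by the $G$-orbits on $\omega^n$ for each $n \ge 1$; closedness of $G$ in $S_\infty$ ensures that any permutation of $\omega$ preserving all of these orbit-relations already lies in $G$, so $\Aut(\mcM) = G$.

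For $(3) \Rightarrow (1)$, the pointwise stabilizers of finite subsets of $\omega$ form a countable neighborhood basis of open subgroups in $S_\infty$, and this property passes to any closed subgroup. For the converse $(1) \Rightarrow (3)$, I would fix a decreasing countable basis $G = V_0 \supseteq V_1 \supseteq \cdots$ of open subgroups at $e$; each $V_n$ has countable index since $G$ is second countable. The left coset action $G \curvearrowright \bigsqcup_n G/V_n$ yields a continuous homomorphism $\pi : G \to S_\infty$, and injectivity follows from $\bigcap_n V_n = \{e\}$. To promote $\pi$ to a closed topological-group embedding I would invoke the standard fact that an injective continuous homomorphism between Polish groups is automatically a closed embedding, via a Baire category or Effros-style argument.

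For $(1) \Rightarrow (4)$, with $\{V_n\}$ as above, the function $d_L(g,h) = \inf\{2^{-n} : g^{-1}h \in V_n\}$ is a compatible left-invariant ultra-pseudometric, and it separates points since $\bigcap_n V_n = \{e\}$. To obtain a complete compatible ultrametric I would symmetrize via $d(g,h) = \max\{d_L(g,h),\, d_L(g^{-1}, h^{-1})\}$, then check that a $d$-Cauchy sequence is both $d_L$- and $d_L$-inverse Cauchy, so its limit in the Polish topology is indeed a group element. For $(4) \Rightarrow (1)$, I would use that ultrametric topologies are zero-dimensional and appeal to the classical fact that a zero-dimensional Polish group is non-Archimedean: any clopen neighborhood of $e$ can be shrunk to an open subgroup by taking a suitable countable intersection of conjugates and preimages under the group operations.

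The two places I expect real work are exactly the Polish-group refinements: in $(1) \Rightarrow (3)$, the passage from a continuous injection into $S_\infty$ to a closed embedding, which is the only step that is not formal manipulation; and in $(1) \Rightarrow (4)$, verifying that the symmetrized $d$ remains an ultrametric and is complete, since compatible left-invariant metrics on Polish groups are well known to fail completeness in general. Everything else reduces to bookkeeping with cosets and orbit-relations.
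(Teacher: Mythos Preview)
The paper does not prove this; it is stated as a fact with a citation to Gao's textbook. Your outline follows the standard route, but it contains one genuine error and one underestimate of difficulty.

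In $(1) \Rightarrow (3)$, the assertion that ``an injective continuous homomorphism between Polish groups is automatically a closed embedding'' is false. The map $n \mapsto e^{2\pi i n \alpha}$ from $\mathbb{Z}$ (discrete) to $\mathbb{T}$, with $\alpha$ irrational, is a continuous injective homomorphism between Polish groups whose image is dense and not closed. What you actually need is specific to your map $\pi$: the cosets $gV_n$ form a basis for the topology of $G$, and each such coset is exactly the $\pi$-preimage of a subbasic open set of $S_\infty$ (the permutations sending the point $V_n$ of $\bigsqcup_m G/V_m$ to $gV_n$), so $\pi$ is open onto its image and hence a topological embedding. Once $\pi(G)$ is known to be Polish in the subspace topology it is $G_\delta$ in its closure, hence comeager there; since the closure is partitioned into cosets of $\pi(G)$, each also comeager, Baire category forces there to be only one coset. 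The Baire-category ingredient you gesture at is the right one, but it applies only after the embedding step, which does not follow from injectivity alone.

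In $(4) \Rightarrow (1)$, a compatible ultrametric certainly gives zero-dimensionality, but producing an open \emph{subgroup} inside a given clopen neighborhood of $e$ is not achieved by ``a suitable countable intersection of conjugates and preimages'': a countable intersection of open sets is typically not open, and a finite one has no reason to be a subgroup. That every zero-dimensional Polish group is non-Archimedean is a genuine theorem (and is part of what the paper's citation to Gao covers); your one-line gloss is not a proof sketch of it.
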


\begin{proof}
See \cite[Theorem 2.4.1, Theorem 2.4.4]{Gao2008} for the equivalence of (1), (2), (3), and (4).
Statement (3) immediately implies (5).
We show that (5) implies (1).
Indeed, if $G$ is involved by $S_\infty$ then there is a closed subgroup $H \le S_\infty$ and a continuous surjective homomorphism from $H$ onto $G$.
By the equivalence of (3) and (1), we can fix a countable local basis of the identity of $H$ of open subgroups of $H$.
The images of these open subgroups are open in $G$ by the open mapping theorem, see e.g. \cite[Theorem 2.3.3]{Gao2008}.
These sets easily form a countable local basis of the identity of $G$ of open subgroups of $G$.
\end{proof}

We will make use of equivalence (2) the most, as it allows the use of language and techniques from model theory, even though our use of model theory will not be particularly deep.
We can assume that $\mcM$ is ultrahomogeneous by adding new relations without changing the set of automorphisms.
This is done as follows:
for each tuple $\bar{b}$ from $M$, add a new relation $R_{\bar{b}}$ to the language and declare $R_{\bar{b}}^\mcM(\bar{a})$ iff there is an automorphism of $\mcM$ sending $\bar{a}$ to $\bar{b}$.

\subsection{Non-Archimedean Polish groups that are cli and not cli}

Recall that a Polish group is \emph{cli} iff it has a complete left-invariant metric which is compatible with the topology.
In the case of a non-Archimedean Polish group, there is a nice characterization.

\begin{fact}[Gao, \cite{Gao1998}]
Let $G = \Aut(\mcM)$ be a non-Archimedean Polish group. Then the following are equivalent:
\begin{enumerate}
    \item $G$ is cli
    \item $\mcM$ has no nontrivial $\mcL_{\omega_1, \omega}$-elementary substructure;
    \item there is no uncountable model of the Scott sentence of $\mcM$
\end{enumerate}
\end{fact}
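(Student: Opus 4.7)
The strategy is to work with a canonical compatible left-invariant metric on $G=\Aut(\mcM)$ and to identify its (non-)completeness with the (non-)existence of a proper elementary substructure, then to relate the latter to uncountable models of the Scott sentence. Following the previous subsection, assume $\mcM$ is ultrahomogeneous. Fix an enumeration $(a_n)_{n<\omega}$ of $M$ and define the compatible left-invariant ultrametric $d(g,h)=2^{-n}$ where $n$ is the least index with $g(a_n)\neq h(a_n)$. The first observation is that a sequence $(g_n)$ is $d$-Cauchy iff $g_n^{-1}g_m\to 1$ in $G$, equivalently $g_n(a)$ is eventually constant for each $a\in M$; this condition does not depend on the choice of compatible left-invariant metric. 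Moreover the pointwise limit $f(a):=\lim_n g_n(a)$ is automatically an injective embedding $\mcM\hookrightarrow\mcM$, and $(g_n)$ converges in $G$ iff $f\in G$ iff $f$ is surjective. So $G$ is cli iff every such self-embedding is automatically surjective.

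For (1)$\Leftrightarrow$(2), the key claim is that the image of any self-embedding $f$ of an ultrahomogeneous $\mcM$ is an $\mcL_{\omega_1,\omega}$-elementary substructure. Given finite $B\subseteq f(M)$, say $B=f(B')$, and $a\in M$: since $f$ is an embedding, $B$ and $B'$ have the same quantifier-free type in $\mcM$, so by ultrahomogeneity there is $\tau\in G$ with $\tau(B')=B$. Then $a':=f(\tau^{-1}(a))\in f(M)$ satisfies $\operatorname{qftp}(B,a')=\operatorname{qftp}(B',\tau^{-1}(a))=\operatorname{qftp}(B,a)$, the first equality because $f$ is an embedding and the second by applying $\tau$. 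By ultrahomogeneity, $a'\isom_B a$. Hence if (2) holds, $f(M)=M$ for every such $f$, and $G$ is cli. Conversely, given a proper $\mcN\prec_{\mcL_{\omega_1,\omega}}\mcM$, a back-and-forth along the enumeration yields partial isomorphisms $p_0\subseteq p_1\subseteq\cdots$ with $p_n:\{a_0,\ldots,a_n\}\to N$, each extendible to some $g_n\in G$; the resulting $(g_n)$ is left-Cauchy but its pointwise limit has range contained in $N\subsetneq M$, so it does not lie in $G$.

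For (2)$\Leftrightarrow$(3): given an uncountable model $\mcM^*$ of the Scott sentence $\sigma_\mcM$, apply downward Löwenheim-Skolem for $\mcL_{\omega_1,\omega}$ twice to produce countable $\mcN\subsetneq\mcN'$ both $\mcL_{\omega_1,\omega}$-elementary in $\mcM^*$ (the second time including any fixed point of $M^*\setminus N$); both satisfy $\sigma_\mcM$, hence are isomorphic to $\mcM$, witnessing a proper $\mcL_{\omega_1,\omega}$-elementary substructure of $\mcM$. For the converse, a proper $\mcN\prec_{\mcL_{\omega_1,\omega}}\mcM$ is automatically isomorphic to $\mcM$ as a countable model of $\sigma_\mcM$, yielding a proper self-embedding of $\mcM$; iterating this self-embedding along $\omega_1$ produces an $\mcL_{\omega_1,\omega}$-elementary chain whose union is an uncountable model of $\sigma_\mcM$.

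The main obstacle is the back-and-forth argument in (1)$\Leftrightarrow$(2) that the image of a self-embedding of an ultrahomogeneous structure is $\mcL_{\omega_1,\omega}$-elementary: ultrahomogeneity is essential here, since otherwise the image of $f$ is only guaranteed to be abstractly isomorphic to $\mcM$, not elementary within it, and without this bridge the topological-algebraic statement about $G$ does not line up with the model-theoretic statements in (2) and (3).
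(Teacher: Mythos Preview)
The paper does not actually give a proof of this statement: it is recorded as a \emph{Fact} attributed to Gao (and later restated as a Theorem with a citation to \cite{Gao1998}), with no argument supplied. So there is nothing in the paper to compare your proof against line by line.

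That said, your argument is essentially the standard one and is correct. A few small points worth tightening:
\begin{itemize}
\item In the $(2)\Leftrightarrow(3)$ direction where you invoke downward L\"owenheim--Skolem for $\mcL_{\omega_1,\omega}$, you should explicitly work with a countable fragment $\mcF$ containing the Scott sentence $\sigma_\mcM$ and all of its subformulas. Full $\mcL_{\omega_1,\omega}$-elementarity does not come for free from L\"owenheim--Skolem; what you actually get is $\mcF$-elementary substructures $\mcN\subsetneq\mcN'$, and then you use that in a countable model of $\sigma_\mcM$ the $\mcF$-types already determine orbits, so $\mcF$-elementarity upgrades to $\mcL_{\omega_1,\omega}$-elementarity in the sense the paper uses (existence of automorphic realizations).
\item In building the $\omega_1$-chain for $\neg(2)\Rightarrow\neg(3)$, you are implicitly using that unions of $\mcL_{\omega_1,\omega}$-elementary chains are $\mcL_{\omega_1,\omega}$-elementary extensions, so that at each countable limit stage the union is still a countable model of $\sigma_\mcM$ (hence isomorphic to $\mcM$, so the construction continues). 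This is routine but worth stating.
\end{itemize}
With these clarifications your proof is complete and is precisely the argument one finds in Gao's work.
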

There is also a nice rank function due to Deissler which characterizes when $G$ is cli, which we will define and discuss later.

A Polish group is \emph{tsi} iff it has a complete two-sided invariant metric which is compatible with the topology.
An example of a Polish group which is cli but not tsi is the automorphism group of the linear order with order-type $\mathbb{Z} * \mathbb{Z}$, which is just the lex order on $\mathbb{Z} \times \mathbb{Z}$.
A natural action of this group induces the equivalence relation $E_0^{[\mathbb{Z}]}$ mentioned in the introduction, discussed further in \cite{CC2022}, \cite{AllisonTSI}, \cite{AllisonPanagio2021}.

The most important non-cli Polish group from our perspective is the automorphism group of Knight's model.
It is significant in the sense that it was the simplest known Polish group which is not cli but does not involve $S_\infty$.
Knight's model $\mcK$ is a countable structure in the language $\mcL = \{<, f_n\}_{n \in \omega}$, where $<$ is a binary relation and each $f_n$ is a unary function which satisfies:
\begin{enumerate}
    \item $<$ is a linear order on $K$;
    \item for every $a \in K$, $\{b \in K \mid b < a\} = \{f_n(a) \mid n \in \omega\}$;
    \item $\mcK$ is 1-transitive (i.e. there is an automorphism between any two elements);
    \item there is a non-trivial $\mcL_{\omega_1, \omega}$-elementary substructure of $\mcK$.
\end{enumerate}

By Gao's characterization, (4) implies that $\Aut(\mcK)$ is not cli.
A concept of \cite{Hjorth1999} which was further developed by Kanovei under the name of ``pinned'' and ``unpinned'' equivalence relations in \cite{Kanovei2008} and Larson and Zapletal in \cite{LarsonZapletal2020} shows that $\Aut(\mcK)$ does not classify $=^+$ and thus does not involve $S_\infty$.
This uses metamathematical techniques (namely, forcing).
We recover the same result in this paper using elementary techniques.

\subsection{Examples of non-Archimedean Polish groups involving $S_\infty$}

The original intention of Knight's model was to produce a countable structure whose Scott sentence has a model of cardinality $\aleph_1$, but no larger.
It is said that the Scott sentence of Knight's model ``characterizes" $\aleph_1$.
In \cite{Hjorth2002Knight}, Hjorth constructed, for each $\alpha$, a countable structure characterizing $\aleph_\alpha$.
Rather than a generalization of Knight's model, Hjorth's construction relies on a very different construction method which resulted in structures whose automorphism groups involve $S_\infty$.

Hjorth's model characterizing $\aleph_1$, which we denote by $\mcH$, is a countable ultrahomogeneous structure in a language with binary relations $S_n$ for every $n$, and $k+2$-ary relations $R_k$ for every $k$, satisfying
\begin{enumerate}
    \item there is some function $f : [H]^2 \rightarrow \omega$ such that for every $a$ and $b$ in $H$ and $n \in \omega$, we have $S_n^{\mcH}(a, b)$ iff $f(\{a, b\}) = n$;
    \item for every $a$ and $b$, the set $S(\{a, b\})$ of all $c$ such that $f(\{a, c\}) = f(\{b, c\})$ is finite, and $R_k^{\mcH}(a, b, \bar{c})$ iff $\bar{c}$ enumerates $S(\{a, b\})$.
\end{enumerate}
It's straightforward to check that every proper elementary $\mcL_{\omega_1, \omega}$-substructure of any model of the Scott sentence of $\mcH$ must be countable.
Thus there are no models of the Scott sentence of $\mcH$ of cardinality $\aleph_2$ or higher.
On the other hand, Hjorth showed that the automorphism group of $\mcH$ involves $S_\infty$ (in the process of showing that its Scott sentence has an uncountable model).
This is done by exploiting the fact that $\mcH$ satisfies disjoint amalgamation.
The fact that the automorphism groups of limits of Fra\"{i}ss\'{e} structures satisfying disjoint amalgamation must involve $S_\infty$ is stated and proved more explicitly in \cite{BFKL2016}.
Another proof of this, in even greater detail, appeared recently in \cite{LU2022}.
We briefly sketch this argument in Section \ref{sec:closure}, as we will build on this idea.

A classic example of an automorphism group that involves $S_\infty$ is $\Aut(\mathbb{Q}, <)$, the automorphism group of the rational linear order.
To see that it involves $S_\infty$, first partition $\mathbb{Q}$ into countably-many dense subsets $A_n$.
Let $H \le \Aut(\mathbb{Q})$ be the closed subgroup of automorphisms $\pi$ satisfying that for every $n, m \in \omega$ and $a, b \in \mathbb{Q}$, if $a \in A_n$ and $\pi(a) \in A_m$ then $b \in A_n$ iff $\pi(b) \in A_m$.
Then define a continuous homomorphism $f : H \rightarrow S_\infty$ where $f(\pi)$ is defined to be the unique $\sigma \in S_\infty$ such that $a \in A_m$ iff $\pi(a) \in A_{\sigma(m)}$ for every $a$ and $m$.
To see that $f$ is indeed surjective, apply a back-and-forth argument.

We will describe one more example, which we will actually make use of later.
Let $\Delta$ be some countably-infinite group, and consider a free action $\Delta \curvearrowright I$ on a countably-infinite set $I$ with infinitely-many $\Delta$-orbits.
Now consider the closed subgroup $P \le S_I$ of permutations $\pi$ of $I$ satisfying $\pi(\delta \cdot x) = \delta \cdot \pi(x)$ for every $x \in I$.
This group involves $S_\infty$.
To see this, fix a transversal $T \subseteq I$, i.e. a set which intersects every $\Delta$-orbit exactly once.
Since $T$ is a countably-infinite set, the Polish group $S_T$ is isomorphic to $S_\infty$.
Moreover, the map $f : P \rightarrow S_T$ sending each $\pi \in P$ to the unique $\sigma \in S_T$ satisfying $\pi(x) \in \Delta \cdot \sigma(x)$ for every $x \in T$, is easily seen to be a continuous surjective homomorphism.

\subsection{Baire-measurable homomorphisms and the orbit continuity lemma}

Given a Polish group $G$, recall that a Polish $G$-space is a Polish space $X$ along with a continuous action $G \curvearrowright X$. We write the induced orbit equivalence relation as $E^G_X$. Given two such orbit equivalence relations $E^G_X$ and $E^H_Y$, a homomorphism is a function $f : X \rightarrow Y$ such that for every $x, y \in X$, $x \mathrel{E^G_X} y$ implies $f(x) \mathrel{E^H_Y} f(y)$. 

The following so-called ``orbit continuity lemma", due to Hjorth and Lupini-Panagiotopoulos, is central. 
This is the main way to extract information from the existence of Baire-measurable homomorphisms that don't trivialize on a comeager set.
Our statement differs slightly from the statement in \cite{LupiniPanagio2018}, so we give an argument for how to recover this one from theirs.

\begin{lemma}[Hjorth, Lupini-Panagiotopoulos]\label{lem:orbit_continuity}
Suppose $f : X \rightarrow Y$ is a Baire-measurable homomorphism from $E^G_X$ to $E^H_Y$ and $G_0$ is a countable dense subgroup of $G$. Then there is a comeager subset $C \subseteq X$ such that
\begin{enumerate}
    \item $f$ is continuous on $C$;
    \item for every $x \in C$, the set of $g$ such that $g \cdot x \in C$ is comeager;
    \item for every $x \in C$ and for every $g \in G_0$, we have $g \cdot x \in C$;
    \item for every $x_0 \in C$ and every open neighborhood $V \subseteq H$ of the identity, there is a neighborhood $U$ of $x_0$ and a nonempty open neighborhood $W \subseteq G$ of the identity such that for any $x \in C \cap U$ and for every $w \in W$ with $w \cdot x \in C \cap U$, we have $f(w \cdot x) \in V \cdot f(x)$; and
    \item for every $x_0 \in C$ and $g_0 \in G_0$ and nonempty open $W \subseteq H$ such that $f(g_0 \cdot x_0) \in W \cdot f(x_0)$, there is an open neighborhood $U$ of $x_0$ such that for every $x \in C \cap U$, we have $f(g_0 \cdot x) \in W \cdot f(x)$;
\end{enumerate}
\end{lemma}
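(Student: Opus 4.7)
The plan is to take the Lupini-Panagiotopoulos version of the orbit continuity lemma, which supplies a comeager $C^{*} \subseteq X$ on which $f$ is continuous and which satisfies the first two conclusions as well as the local property (4), and to refine it to a smaller comeager $C$ which is additionally $G_0$-invariant and for which the pointwise conclusion (5) holds. Since $G_0$ is countable, both refinements come from straightforward topological manipulations rather than any genuinely new analytic input.

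To secure (3) I would set
\[
C = \bigcap_{g_0 \in G_0} g_0 \cdot C^{*},
\]
which is comeager as a countable intersection of left-translates of a comeager set, and is $G_0$-invariant by construction. Continuity on $C$ (hence (1)) is inherited from $C \subseteq C^{*}$. Property (2) is preserved: for $x \in C$,
\[
\{ g \in G : g \cdot x \in C \} \;=\; \bigcap_{g_0 \in G_0} g_0 \cdot \{ g : g \cdot x \in C^{*} \},
\]
which is a countable intersection of comeager left-translates (using (2) for $C^{*}$), hence comeager. Property (4) transfers directly from $C^{*}$ to $C$, since replacing $C^{*}$ by the smaller $C$ in the clause ``$w \cdot x \in C \cap U_0$'' only strengthens the hypothesis.

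The substantive step is deriving (5). Fix $x_0 \in C$, $g \in G_0$, and nonempty open $W \subseteq H$ with $f(g \cdot x_0) \in W \cdot f(x_0)$, say $f(g \cdot x_0) = h_0 f(x_0)$ with $h_0 \in W$. By continuity of multiplication and inversion in $H$, pick an open neighborhood $V$ of the identity satisfying $V h_0 V^{-1} \subseteq W$. By $G_0$-invariance $g \cdot x_0 \in C$, and continuity of $f$ on $C$ at both $x_0$ and $g \cdot x_0$ (combined with continuity of the action map $x \mapsto g \cdot x$) yields an open neighborhood $U_0$ of $x_0$ such that for every $x \in C \cap U_0$ we have simultaneously $f(x) \in V \cdot f(x_0)$ and $f(g \cdot x) \in V \cdot f(g \cdot x_0) = V h_0 \cdot f(x_0)$. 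Writing $f(x) = v_2 f(x_0)$ and $f(g \cdot x) = v_1 h_0 f(x_0)$ with $v_1, v_2 \in V$ then gives $f(g \cdot x) = v_1 h_0 v_2^{-1} \cdot f(x) \in W \cdot f(x)$, as required. The only real obstacle is bookkeeping: at each step one must check that the chosen refinement preserves the earlier conclusions, which is always a matter of a countable intersection of comeager sets or a uniform shrinking of open neighborhoods.
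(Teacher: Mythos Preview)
Your argument for (5) has a genuine gap. You write that continuity of $f$ on $C$ at $x_0$ gives an open $U_0 \ni x_0$ with $f(x) \in V \cdot f(x_0)$ for all $x \in C \cap U_0$. But $V \cdot f(x_0)$ is a subset of the single $H$-orbit $H \cdot f(x_0)$, which is typically meager in $Y$; it is \emph{not} an open neighborhood of $f(x_0)$ in $Y$. Continuity of $f\restriction C$ only tells you that $f(x)$ lands in any prescribed open set of $Y$ containing $f(x_0)$, and there is no reason $f(x)$ should lie in the same $H$-orbit as $f(x_0)$ at all, let alone in $V \cdot f(x_0)$. The same objection applies to the claim $f(g \cdot x) \in V \cdot f(g \cdot x_0)$. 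So the chain $f(g\cdot x) = v_1 h_0 v_2^{-1} f(x)$ never gets off the ground.

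The paper avoids this by not comparing $f(x)$ to the fixed value $f(x_0)$. Instead, for each $g \in G_0$ and each basic open $W \subseteq H$ it considers the Baire-measurable $\{0,1\}$-valued map $x \mapsto \llbracket f(g\cdot x) \in W \cdot f(x)\rrbracket$ and passes to a comeager $C_{g,W}$ on which this indicator is continuous; intersecting over the countably many pairs $(g,W)$ and then restoring properties (2) and (3) gives the final $C$. On that $C$, if $f(g\cdot x_0) \in W\cdot f(x_0)$ then the indicator is $1$ at $x_0$, hence $1$ on a relative neighborhood, which is exactly (5). The point is that the predicate ``$f(g\cdot x) \in W\cdot f(x)$'' is itself Baire-measurable in $x$, so one can stabilize it directly rather than trying to control $f(x)$ and $f(g\cdot x)$ separately against fixed targets.

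A smaller issue: you assume the Lupini--Panagiotopoulos lemma already yields clause (4) as stated. In fact their version only gives the conclusion for \emph{comeagerly many} $w \in W$; upgrading to all $w$ with $w\cdot x \in C \cap U_0$ requires an extra $\hat V^2 \subseteq V$ argument, which the paper carries out.
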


Note that in (5) the open set $W$ need not contain the identity.

\begin{proof}
Let $C_0$ be a comeager set satisfying the first three conditions as in \cite[Lemma 2.5]{LupiniPanagio2018}. 
In particular, $C_0$ satisfies (1), (2) and
\begin{enumerate}
\item[(4*)]  for every $x_0 \in C_0$ and every nonempty open neighborhood $V \subseteq H$ of the identity, there is a neighborhood $U_0$ of $x_0$ and a nonempty open neighborhood $W \subseteq G$ of the identity such that for any $x \in C_0 \cap U_0$ and for a comeager set of $w \in W$, we have $f(w \cdot x) \in V \cdot f(x)$ and $g \cdot x \in C_0$.
\end{enumerate}
For every $g \in G_0$ and basic open $W$, let $C_{g, W}$ be a comeager set on which the Baire-measurable function $\hat{f} : X \rightarrow 2$ defined by
\[\hat{f}(x) = 1 \leftrightarrow f(g \cdot x) \in W \cdot f(x)\]
is continuous.
Let $C_1$ be the intersection of $C_0$ and each $C_{g, W}$.
Let $C_2$ be the intersection of each $g \cdot C_1$ for $g \in G_0$.
Define
\[C = \{x \in C_2 \mid \forall^*g \in G, \; g \cdot x \in C_2\}.\]
Since $G_0$ contains the identity, we have $C \subseteq C_2 \subseteq C_1 \subseteq C_0$.
Note that $C_1$ and $C_2$ are comeager by continuity of the group action and the countable closure of the meager ideal, and $C$ is comeager by Kuratowski-Ulam.
We argue that this works.

Clause (1) is satisfied because $C \subseteq C_0$.
Clause (2) is satisfied because if $x \in C$ then there is a comeager set $C_G \subseteq G$ such that $g \cdot x \in C_2$ for every $g \in C_G$. Then in fact for any $g \in G$ we have $g \cdot x \in C$ because $C_G g^{-1}$ is comeager in $G$.
Clause (3) is satisfied by the same argument and the fact that $C_2$ is invariant under $G_0$.

Clause (4) is a modification of (4*), with the difference being that we quantify over \emph{all} $w \in W$, not just a set comeager in $W$, such that $w \cdot x \in U_0 \cap C$.
Let $x_0 \in C$ and $V \subseteq H$ be an open neighborhood of the identity of $H$.
Let $\hat{V} \subseteq H$ be a symmetric open neighborhood of the identity of $H$ such that $\hat{V}^2 \subseteq V$.
By (4*), there is an open neighborhood $U \ni x_0$ and an open neighborhood $W \subseteq G$ of the identity of $G$ such that for every $x \in C \cap U$ there is a comeager set of $w \in W$ such that $f(w \cdot x) \in \hat{V} \cdot f(x)$.

Let $x \in U \cap C$ and $w \in W$ such that $w \cdot x \in U \cap C$.
Let $D_0$ be the set of $w' \in W$ such that $f(w' \cdot x) \in \hat{V} \cdot f(x)$ and let $D_1$ be the set of $w' \in W$ such that $f(w'w \cdot x) \in \hat{V} \cdot f(w \cdot x)$.
The set $D_0$ is comeager in $W$.
The set $D_1$ is also comeager in $W$ and thus $D_1w$ is comeager in $Ww$, and since $W \cap Ww$ is nonempty open, we may fix some $w' \in D_0 \cap D_1w$.
Then we have that $f(w' \cdot x) \in \hat{V} \cdot f(x)$ and $f(w'w^{-1} \cdot (w \cdot x)) \in \hat{V} \cdot f(w \cdot x)$.
Thus $f(w \cdot x) \in \hat{V}^{-1}\hat{V} \cdot f(x) \subseteq V \cdot f(x)$ as desired.

Clause (5) is satisfied because for any $x_0 \in C$ and $g \in G_0$ and nonempty open $W \subseteq H$ such that $f(g \cdot x_0) \in W \cdot f(x_0)$ then we have $x_0 \in C_{g, W}$ and so there is an open $U \ni x_0$ such that for every $x \in C_{g, W} \cap U$ we have $f(g \cdot x) \in W \cdot f(x)$. This works.
\end{proof}

\section{A weakening of disjoint amalgamation}\label{sec:closure}

In \cite{Hjorth2002Knight}, and more explicitly in \cite{BFKL2016}, disjoint amalgamation is identified as a sufficient condition for the automorphism group of a structure to involve $S_\infty$.
In this section, we introduce the appropriate weakening of disjoint amalgamation.
We first give a brief review of the Fra\"{i}ss\'{e} theory of classes of finite structures, and review the ideas of \cite{BFKL2016}.
Then we introduce the weakening of disjoint amalgamation and prove that it is necessary and sufficient.

\subsection{A review of Fra\"{i}ss\'{e} theory}\label{ss:fraisse}

We will briefly review the classical Fra\"{i}ss\'{e} theory before moving on to the generalization that we will need.
Let $\mcL$ be a countable relational language.
For an $\mcL$-structure $\mcA$, we write $A$ to refer to the universe of $\mcA$.
Given two $\mcL$-structures $\mcA$ and $\mcB$, we write $\mcA \preceq_\mcL \mcB$ if $\mcA$ is an $\mcL$-substructure of $\mcB$.
We write $\pi : \mcA \isom \mcB$ if $\pi$ is an isomorphism of $\mcA$ with $\mcB$.
More generally, if $C \subseteq A \cap B$ we write $\pi : \mcA \isom_C \mcB$ if $\pi$ is an isomorphism of $\mcA$ with $\mcB$ that also satisfies $\pi(c) = c$ for all $c \in C$.
Alternatively, we will often say that $\pi$ witnesses $\mcA \isom_C \mcB$, and sometimes we will simply say $\mcA \isom_C \mcB$ when such $\pi$ exists but we have no need to refer to it by name.
A countable $\mcL$-structure $\mcM$ is \emph{ultrahomogeneous} if for any finite $\mcA, \mcB \prec_\mcL \mcM$ and $\pi : \mcA \isom \mcB$, there is an automorphism of $\mcM$ that extends $\pi$.

Let $\mcF$ be a class of finite $\mcL$-structures closed under isomorphism and substructure.
We say that an $\mcL$-structure $\mcM$ is a \emph{limit} of $\mcF$ if the age of $\mcM$ is $\mcF$ and $\mcM$ is ultrahomogeneous.
Such $\mcM$ has an \emph{extension property}: if $\mcA, \mcB \in \mcF$ satisfy $\mcA \prec_\mcL \mcM$ and $\mcA \preceq_\mcL \mcB$, then there is some $\mcB' \isom_A \mcB$ such that $\mcB' \prec_\mcL \mcM$.
Using the extension property and a straightforward back-and-forth argument, one can show that a countable limit (if it exists) is unique up to isomorphism.

We say that $\mcF$ is a Fra\"{i}ss\'{e} class if moreover it satisfies the \emph{amalgamation property}: whenever $\mcA, \mcB, \mcC \in \mcF$ satisfy $\mcC \preceq_\mcL \mcA$ and $\mcC \preceq_\mcL \mcB$ then there is some $\mcA' \isom_C \mcA$ and some $\mcD \in \mcF$ such that $\mcA' \preceq_\mcL \mcD$ and $\mcB \preceq_\mcL \mcD$.
The main result of Fra\"{i}ss\'{e} theory is that such $\mcF$ always has a countable limit.

Say that $\mcF$ satisfies the \emph{disjoint amalgamation property} (also called the strong amalgamation property) iff for every $\mcA, \mcB, \mcC \in \mcF$ with $\mcC \preceq_\mcL \mcA$ and $\mcC \preceq_\mcL \mcB$, there is some $\mcA', \mcD \in \mcF$ such that $\mcA' \isom_C \mcA$ and $\mcA' \preceq_\mcL \mcD$ and $\mcB \preceq_\mcL \mcD$, and moreover $A' \cap B = C$.

\begin{proposition}[Baldwin-Friedman-Koerwien-Laskowski, \cite{BFKL2016}]\label{prop:bfkl}
If $\mcF$ is a Fra\"{i}ss\'{e} class satisfying the disjoint amalgamation property, and $\mcM$ is a limit, then $\Aut(\mcM)$ involves $S_\infty$.
\end{proposition}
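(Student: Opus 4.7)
The plan is to build a closed subgroup $G_0 \le \Aut(\mcM)$ together with a continuous surjective homomorphism $\rho : G_0 \to S_\infty$, in the spirit of the $P \le S_I$ example at the end of Section~2.5. The core object to produce is an infinite sequence $(b_n)_{n \in \omega}$ in $M$, together with a finite $\mcA \preceq_\mcF \mcM$, such that for every finite $F \subseteq \omega$ and every permutation $\tau$ of $F$, the partial map fixing $A$ pointwise and sending $b_n \mapsto b_{\tau(n)}$ (for $n \in F$) is a partial isomorphism of $\mcM$, hence extends to an automorphism of $\mcM$ by ultrahomogeneity. Once $(b_n)$ is in hand, I take $G_0$ to be the closed subgroup of automorphisms fixing $A$ pointwise and setwise stabilizing $\{b_n : n \in \omega\}$, and define $\rho : G_0 \to S_\infty$ by $\rho(\pi)(n) = m$ iff $\pi(b_n) = b_m$. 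Continuity is immediate from the pointwise-convergence topology; surjectivity comes from a back-and-forth argument, in which any $\sigma \in S_\infty$ is approximated by its finite restrictions, each of which extends (by the symmetric property of $(b_n)$ together with ultrahomogeneity of $\mcM$) to a partial automorphism, and the back-and-forth produces a $\pi \in G_0$ with $\rho(\pi) = \sigma$.

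To construct $(b_n)$, fix $\mcA \preceq_\mcF \mcM$ finite and a strict extension $\mcA \preceq_\mcF \mcB$ in $\mcF$. I inductively build a chain $\mcA = \mcC_0 \preceq_\mcF \mcC_1 \preceq_\mcF \cdots$ in $\mcF$ so that $\mcC_n$ contains $n$ pairwise $A$-disjoint copies $\mcB^{(1)}, \ldots, \mcB^{(n)}$ of $\mcB$ (that is, $B^{(i)} \cap B^{(j)} = A$ for $i \ne j$), and so that the symmetric group on $n$ letters acts on $\mcC_n$ by $\mcF$-automorphisms permuting the copies and fixing $A$ pointwise, compatibly with the inclusion into $\mcC_{n+1}$. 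Clause~(2) of Proposition~\ref{prop:limit_theorem} then embeds the direct limit $\bigcup_n \mcC_n$ into $\mcM$ over $\mcA$; fixing a point $b \in B \setminus A$ and letting $b_n$ be the image of the corresponding element in $\mcB^{(n)}$ produces the desired sequence, since the $S_n$-action on $\mcC_n$ translates, via ultrahomogeneity of $\mcM$, into partial automorphisms of $\mcM$ realizing every finite permutation of $\{b_n\}$ fixing $A$.

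The main obstacle is the inductive construction of the symmetric amalgams $\mcC_n$: disjoint amalgamation only asserts the existence of \emph{some} amalgam, not a canonical symmetric one. The approach is to bootstrap. Given $S_{n-1}$-symmetric $\mcC_{n-1}$, apply disjoint amalgamation to $\mcC_{n-1}$ and a fresh copy of $\mcB$ over $\mcA$ to obtain some amalgam $\mcD_0 \in \mcF$, which \emph{a priori} carries only an $(S_{n-1} \times S_1)$-action. One then iteratively amalgamates $\mcD_0$ with its translates under the would-be transpositions swapping the new copy with each of the old ones, using disjoint amalgamation at every step to preserve the pairwise $A$-disjointness of the copies, and reaches in finitely many steps a structure $\mcC_n \in \mcF$ that admits the full $S_n$-action. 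This symmetrization step — upgrading the mere existence of a disjoint amalgam to a canonical, symmetric amalgam — is the heart of the argument. With $\mcC_n$ constructed, the rest of the proof (embedding into $\mcM$, definition of $G_0$ and $\rho$, verification of closedness, continuity, and back-and-forth surjectivity) proceeds as described, yielding the required continuous surjection onto $S_\infty$ and hence the conclusion that $\Aut(\mcM)$ involves $S_\infty$.
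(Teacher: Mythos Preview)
There is a genuine gap in the symmetrization step. Your claim that one can build $\mcC_n \in \mcF$ carrying a full $S_n$-action by $\mcF$-automorphisms permuting the copies of $\mcB$ fails already for the class of finite linear orders, which has disjoint amalgamation and limit $(\mathbb{Q},<)$. Take $\mcA = \emptyset$ and $\mcB$ a single point. For $n=2$, any $\mcC_2 \in \mcF$ containing two distinct points $b^{(1)}, b^{(2)}$ has either $b^{(1)} < b^{(2)}$ or $b^{(2)} < b^{(1)}$, and the transposition is never an automorphism. Your iterative ``amalgamate with would-be translates'' procedure asks to amalgamate $\{b^{(1)} < b^{(2)}\}$ with $\{b^{(2)} < b^{(1)}\}$ over the common set $\{b^{(1)}, b^{(2)}\}$, which is impossible inside a linear order. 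More to the point, $(\mathbb{Q},<)$ admits no totally indiscernible sequence of length $\ge 2$ at all, so no sequence $(b_n)$ with the property you postulate can exist, for any choice of $\mcA$ and $\mcB$. The same obstruction kills the back-and-forth for surjectivity even if one tried to weaken the hypothesis on $(b_n)$.

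The paper's argument avoids this by never looking for indiscernibles in $\mcM$. Instead it passes to the class $\mcF^*$ of pairs $(\mcA, \oc_{\mcA})$ with $\oc_{\mcA} : A \to \omega$ an arbitrary coloring; disjoint amalgamation of $(\mcF, \preceq_{\mcF})$ is precisely what makes $(\mcF^*, \preceq^*)$ a Fra\"{i}ss\'{e} class. Its limit is $(\mcM, \oc)$ for a generic coloring $\oc$, and the closed subgroup $H \le \Aut(\mcM)$ of automorphisms that permute color classes consistently maps continuously onto $S_\infty$, with surjectivity by back-and-forth in the \emph{colored} limit. For $(\mathbb{Q},<)$ this recovers the classical proof: partition $\mathbb{Q}$ into countably many dense pieces and permute them. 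The essential difference is that the required symmetry lives on the colors, not on the underlying $\mcL$-structure.
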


The proof proceeds by considering the class $\mcF_c$ of ``colored" versions of structures in $\mcF$.
A colored $\mcF$-structure is a structure $\mcA$ in the extended language $\mcL_c := \mcL \cup \{R_n \mid n \in \omega\}$ where each $R_n$ is a new unary relation such that the $\mcL$-reduct of $\mcA$ is in $\mcF$ and for every $a \in A$ there is exactly one $n$ such that $\mcA \models R_n(a)$.
As a notational abuse, we will write colored $\mcF$-structures as pairs $(\mcA, \oc_\mcA)$ where $\mcA \in \mcF$ and $\oc_\mcA$ is a function $\oc_\mcA : A \rightarrow \omega$, and so given two colored $\mcF$-structures $(\mcA, \oc_\mcA)$ and $(\mcB, \oc_\mcB)$ we have $(\mcA, \oc_\mcA) \preceq_{\mcL_c} (\mcB, \oc_\mcB)$ iff $\mcA \preceq_\mcL \mcB$ and $\oc_\mcB \upharpoonright A = \oc_\mcA$.

The class $\mcF_c$ of colored $\mcF$-structures satisfies downward closure and isomorphism invariance, but need not satisfy the amalgamation property.
For example, consider the case where $\mcF$ is the class of all finite partial matchings (i.e. undirected graphs where each vertex has degree 0 or 1).
This is a Fra\"{i}ss\'{e} class and its limit is the unique perfect matching of a countably-infinite set up to isomorphism.
However, $\mcF_c$ does not satisfy the amalgamation property.
If we had $(\mcC, \oc_\mcC) \preceq_{\mcL_c} (\mcA, \oc_\mcA), (\mcB, \oc_\mcB)$ and there was some $c \in C$ and $a \in A \setminus C$ and $b \in B \setminus C$ with $\mcA$ matching $c$ with $a$ and $\mcB$ matching $c$ with $b$, then any attempt to amalgamate $\mcA$ and $\mcB$ over $\mcC$ would have to identify $a$ with $b$, because $c$ can only be matched to one other vertex.
However if $\oc_\mcA(a) \neq \oc_\mcB(b)$ then this would be impossible.

However, with the additional assumption that $\mcF$ has the disjoint amalgamation property, it is easy to confirm that $\mcF_c$ satisfies the amalgamation property.
Thus we can compute the limit of $\mcF_c$, which would be a pair $(\mcM', \oc)$ for some coloring $\oc : M' \rightarrow \omega$.
Since the limit of a Fra\"{i}ss\'{e} class is unique up to isomorphism, and $\mcM'$ is a limit of $\mcF$, we can assume without loss of generality that $\mcM' = \mcM$.

Next, consider the closed subgroup $H \le \Aut(\mcM)$ of automorphisms of $\mcM$ which permute the colors consistently, i.e. $h \in H$ iff $\oc(a) = \oc(b)$ if and only if $\oc(h \cdot a) = \oc(h \cdot b)$ for every $a, b \in M$, or equivalently, there is some $\sigma_h \in S_\infty$ such that $\oc(h \cdot a) = \sigma_h(\oc(a))$ for every $a \in M$.
There is a natural continuous homomorphism $f : H \rightarrow S_\infty$ sending $h$ to $\sigma_h$, noting that $\sigma_h$ is unique as every color appears somewhere in $M$.
The final step is to show that $f$ is surjective, by fixing an arbitrary $\sigma$ and constructing by a back-and-forth method (utilizing the homogeneity of the coloring $\oc$) an automorphism $h \in H$ such that $\sigma = \sigma_h$.
We'll leave out the details here since we'll be proving a generalization of this argument in the next subsection.

It's natural to ask if the converse to Proposition \ref{prop:bfkl} is true.
Evidently it is not, as one could again consider the case where $\mcF$ is the class of finite partial matchings.
This does not satisfy disjoint amalgamation as every vertex must be matched to at most one other, however its automorphism group involves $S_\infty$ as all of the pairs can be permuted arbitrarily.

Without disjoint amalgamation, we can't assume that the ``colored'' finite partial matchings $\mcF_c$ has the amalgamation property.
However, it's clear what the limit of $\mcF_c$ should be when $\mcF$ is the finite partial matchings.
It should be the countably-infinite perfect matching where every pair of colors (not necessarily distinct) appears infinitely-many times among the matched pairs of vertices.
One could produce this limit using a weakening of Fra\"{i}ss\'{e} theory where the amalgamation property is replaced by the \emph{weak amalgamation property}.
The class of colored finite partial matchings $\mcF_c$ satisfies weak amalgamation, which says that for every $\mcC \in \mcF_c$ there is some $\mcC' \in \mcF_c$ with $\mcC \prec_{\mcL} \mcC'$ such that for every $\mcA, \mcB \in \mcF_c$ with $\mcC' \preceq_\mcL \mcA$ and $\mcC' \preceq_\mcL \mcB$ there is some $\mcA' \isom_{C'} \mcA$ and $\mcD \in \mcF_c$ with $\mcA', \mcB \preceq_\mcL \mcD$.
The idea is if $\mcC \in \mcF_c$ then let $\mcC'$ be a superstructure which gives every vertex a match.

In other words, for this choice of $\mcF$ the amalgamation property works for structures that are definably-closed, or in a slightly different wording, closed with respect to definable closure.
However we come across two problems.
The first is that in the more general situation, the definable closure may be infinite.
This might not be much of a hindrance, since Fra\"{i}ss\'{e} theory was developed in the broader context of ``finitely-generated'' structures.
However the more serious issue is that we will need to restrict the amalgamation property with a closure that is coarser than definable closure.

An example of a coarser closure operator is the notion of pseudo-algebraic closure.
For a countable atomic $\mcL$-structure $\mcM$, the pseudo-algebraic closure of a set $A \subseteq M$ is defined to be the set of all $b \in M$ such that $b \in N$ for every $\mcN \preceq_{\countlogic} \mcM$ with $A \subseteq N$.
It's straightforward to see that if $b$ is in the definable closure of $A$ then it is in the pseudo-algebraic closure of $A$.
This is not quite the correct closure notion we want (we want an even coarser closure operator), but it is reasonably close to the correct one, and indeed our theory will significantly parallel the theory of the pseudo-algebraic closure as developed in \cite{Deissler1997}.

Our solution to this problem is to do two things: re-develop Fra\"{i}ss\'{e} theory with a notion of ``strong substructure'' $\mcA \preceq^* \mcB$ which puts more restrictions than simply $\mcA$ being an $\mcL$-substructure of $\mcB$; and find a suitable closure operator and corresponding weakening of disjoint amalgamation.

We will be doing this in the next subsections, but we describe now for the example of the colored finite partial matchings $\mcF_c$ what we will do differently.
Given $\mcA, \mcB \in \mcF_c$, we say $\mcA \preceq^* \mcB$, i.e. $\mcA$ is a strong substructure of $\mcB$ if $\mcA$ is a substructure of $\mcB$ and moreover for every vertex in $B \setminus A$ that is matched with a vertex in $A$, it must be given color 0.
This will indeed satisfy the right variation of amalgamation property.

\subsection{Strong Fra\"{i}ss\'{e} theory}

\begin{definition}\label{def:strong_substructure}
Let $\mcL$ be a countable relational language.
Let $\mcF$ be a class of finite $\mcL$-structures closed under isomorphism and substructure.
Let $\preceq^*$ be a binary relation on $\mcF$, where we say that $\mcA$ is a \emph{strong substructure} of $\mcB$ if $\mcA \preceq^* \mcB$, which satisfies the following axioms:
\begin{description}
    \item[Preorder]\label{strong:preorder} the strong substructure relation $\preceq^*$ is reflexive and transitive
    \item[Coarsening]\label{strong:coarsening} if $\mcA \preceq^* \mcB$ then $\mcA \preceq_{\mcL} \mcB$;
    \item[Invariance]\label{strong:invariance} if $\mcA \preceq^* \mcB$ and $\pi : \mcB \isom \mcC$ is an isomorphism, then $\pi[\mcA] \preceq^* \mcC$;
    \item[Downward closure]\label{strong:downward_closure} if $\mcA \preceq^* \mcB$ and $\mcA \preceq_\mcL \mcC \preceq_\mcL \mcB$ then $\mcA \preceq^* \mcC$;
    \item[Positivity]\label{strong:positivity} $\emptyset \preceq^* \mcA$ for every $\mcA \in \mcF$; and
    \item[No maximal models]\label{strong:nmm} there are no maximal models with respect to $\preceq^*$, i.e. for every $\mcA \in \mcF$ there is some $\mcB \in \mcF$ with $\mcA \prec^* \mcB$.
\end{description}
\end{definition}

One should check that for our earlier example of the colored finite partial-matchings, the example of strong substructure we described will satisfy all the above axioms.
Furthermore, with this example in mind we can see that we should not insist that if $\mcA \preceq^* \mcB$ and $\mcC \prec_\mcL \mcA$ then $\mcC \preceq^* \mcB$.

We write $\mcB \isom \mcC$ iff there is an isomorphism $\pi : B \rightarrow C$ between $\mcB$ and $\mcC$.
More generally, if $A \subseteq B \cap C$, then we write $\mcB \isom_A \mcC$ iff there is an isomorphism $\pi : B \rightarrow C$ between $\mcB$ and $\mcC$ satisfying $\pi(a) = a$ for every $a \in A$.

Let $\mcM$ be an infinite $\mcL$-structure (which would thus not be in $\mcF$).
Given $\mcA \in \mcF$ with $\mcA \prec_{\mcL} \mcM$ we say $\mcA$ is a \emph{strong substructure} of $\mcM$, denoted $\mcA \prec^* \mcM$ if for any $\mcB \in \mcF$ with $\mcA \preceq_\mcL \mcB \prec_\mcL \mcM$ we have $\mcA \preceq^* \mcB$.

\begin{lemma}\label{lem:strong_substructure}
If $\mcM$ is an $\mcL$-structure and $\mcA, \mcB \in \mcF$ satisfy $\mcA \preceq^* \mcB$ and $\mcB \prec^* \mcM$ then $\mcA \prec^* \mcM$.
Moreover, if $\mcA \prec^* \mcM$ and $\sigma$ is an automorphism of $\mcM$ then $\sigma[\mcA] \prec^* \mcM$.
\end{lemma}

\begin{proof}
For the first claim, fix arbitrary $\mcC \in \mcF$ with $\mcA \preceq_\mcL \mcC \prec_\mcL \mcM$.
Take $\mcD := \mcM \upharpoonright (B \cup C)$.
Then $\mcB \preceq_\mcL \mcD$ and since $\mcB \prec^* \mcM$ we have $\mcB \preceq^* \mcD$.
Thus by transitivity of $\prec^*$ we have $\mcA \preceq^* \mcD$.
Since we have $\mcA \preceq_\mcL \mcC \preceq_\mcL \mcD$ then by downward closure we have $\mcA \preceq^* \mcC$ as desired.

For the second claim, fix arbitrary $\mcA \prec^* \mcM$ and $\sigma$ an automorphism of $\mcM$.
To see that $\sigma[\mcA] \prec^* \mcM$ fix any $\mcB \in \mcF$ satisfying $\sigma[\mcA] \preceq_\mcL \mcB \prec_\mcL \mcM$.
Then we have $\mcA \preceq_\mcL \sigma^{-1}[\mcB] \prec_\mcL \mcM$.
Since $\mcA \prec^* \mcM$ we have $\mcA \preceq^* \sigma^{-1}[\mcB]$ and thus by invariance we have $\sigma[\mcA] \preceq^* \mcB$ as desired.
\end{proof}

We say that $\mcM$ is \emph{ultrahomogeneous on strong substructures} if whenever $\mcA, \mcA' \in \mcF$ satisfy $\mcA \prec^* \mcM$ and $\mcA' \prec^* \mcM$ then any isomorphism $\pi : \mcA \isom \mcA'$ can be extended to an automorphism of $\mcM$.

\begin{definition}\label{def:strong_limit}
Given a countable relational language $\mcL$ and $\mcF$ a class of finite $\mcL$-structures closed under isomorphism and substructure, and $\preceq^*$ a notion of strong substructure, we say that a countable $\mcL$-structure $\mcM$ is a \emph{strong limit} of $(\mcF, \preceq^*)$ if
\begin{enumerate}
\item[(i)] For every finite $C_0 \subseteq M$ there is some $\mcA \in \mcF$ with $\mcA \prec^* \mcM$ and $C_0 \subseteq A$;
\item[(ii)] For every $\mcA \in \mcF$ there is some $\mcA' \isom \mcA$ such that $\mcA' \prec^* \mcM$; and
\item[(iii)] $\mcM$ is ultrahomogeneous on strong substructures.
\end{enumerate}
\end{definition}

\begin{lemma}\label{lem:extension_property}
If $\mcM$ is the strong limit of $(\mcF, \preceq^*)$ then it has the following \emph{extension property}: if $\mcA, \mcB \in \mcF$ satisfy $\mcA \prec^* \mcM$ and $\mcA \preceq^* \mcB$ then there is some $\mcB' \isom_A \mcB$ such that $\mcA \preceq^* \mcB' \prec^* \mcM$. 
\end{lemma}

\begin{proof}
By (ii) there is some $\pi : \mcB' \isom \mcB$ with $\mcB' \prec^* \mcM$.
Letting $\mcA' := \pi^{-1}[\mcA]$ we have by invariance that $\mcA' \prec^* \mcB'$ and thus by Lemma \ref{lem:strong_substructure} that $\mcA' \prec^* \mcM$.
Thus by (iii) there is an automorphism $\sigma$ of $\mcM$ extending $\pi \upharpoonright A'$.
Let $\mcB'' := \sigma[\mcB']$.
Then by Lemma \ref{lem:strong_substructure} we have $\mcB'' \prec^* \mcM$ and by invariance we have $\mcA = \sigma[\mcA'] \prec^* \sigma[\mcB'] = \mcB''$.
Moreover we have $\pi \circ \sigma^{-1} : \mcB'' \isom_{A} \mcB$ as desired.
\end{proof}

\begin{proposition}\label{prop:unique_limit}
If it exists, the strong limit of $(\mcF, \preceq^*)$ is unique up to isomorphism.
\end{proposition}

\begin{proof}
Let $\mcM$ and $\mcN$ be two strong limits of $(\mcF, \preceq^*)$.
Since they are countable, it suffices to show that the set
\[ \mcI = \{\pi : \mcA \isom \mcB \mid \mcA, \mcB \in \mcF, \; \mcA \prec^* \mcM, \; \mcB \prec^* \mcN\}\]
has the back-and-forth property.

In particular, the first thing we need to show is that if $\pi : \mcA \isom \mcB$ is in $\mcI$ and $c \in M$ then there is an extension $\pi' : \mcA' \isom \mcB'$ of $\pi$ where $\mcA \prec_\mcL \mcA' \prec^* \mcM$ and $\mcB \prec_\mcL \mcB' \prec^* \mcN$ and $c \in A'$.
Indeed, apply Definition \ref{def:strong_limit}.(i) to find some $\mcA' \in \mcF$ with $A \cup \{c\} \subseteq A'$ where $\mcA' \prec^* \mcM$.
Extend $\pi$ arbitrarily to some $\pi' : \mcA' \isom \mcB'$ where $\mcB \prec_\mcL \mcB'$.
Since $\mcA \prec^* \mcM$ we have by definition that $\mcA \prec^* \mcA'$ and thus by invariance we have $\mcB \prec^* \mcB'$.
Thus by the extension property Lemma \ref{lem:extension_property} there is some $\sigma : \mcB'' \isom_B \mcB'$ such that $\mcB'' \prec^* \mcN$.
Then $\sigma^{-1} \circ \pi' : \mcA' \isom \mcB''$ is the desired extension of $\pi$.

The second thing we need to show is that if $\pi : \mcA \isom \mcB$ is in $\mcI$ and $d \in N$ then there is an extension $\pi' : \mcA' \isom \mcB'$ of $\pi$ where $\mcA \prec_\mcL \mcA' \prec^* \mcM$ and $\mcB \prec_\mcL \mcB' \prec^* \mcN$ and $d \in B'$.
This follows by the same argument.

Also of course it is nonempty since it contains the empty function by positivity.
\end{proof}

We say that $\mcF$ satisfies the \emph{amalgamation property on strong substructures} if for any $\mcA, \mcB, \mcC \in \mcF$, if $\mcA \preceq^* \mcB$ and $\mcA \preceq^* \mcC$ then there is some $\mcC' \in \mcF$ satisfying $\mcC' \isom_A \mcC$ and $\mcD \in \mcF$ such that $\mcB \preceq^* \mcD$ and $\mcC' \preceq^* \mcD$.

\begin{proposition}\label{prop:exists_limit}
If $(\mcF, \preceq^*)$ satisfies the amalgamation property on strong substructures then it has a strong limit.
\end{proposition}

\begin{proof}
Let $(\mcA_n, \mcB_n)_{n \in \omega}$ be a sequence of pairs $\mcA_n, \mcB_n$ from $\mcF$ with $\mcA_n \prec^* \mcB_n$ and $A_n \subseteq \omega$ such that for any pair $\mcA, \mcB \in \mcF$ with $\mcA \prec^* \mcB$ and $A \subseteq \omega$ there are infinitely-many $n$ such that $\mcA_n = \mcA$ and $\mcB_n \isom_{\mcA_n} \mcB$.

We inductively construct a sequence $(\mcC_n)_{n \in \omega}$ from $\mcF$ satisfying for each $n$
\begin{enumerate}
\item[(a)] $C_n \subseteq \omega$
\item[(b)] $\mcC_n \preceq^* \mcC_{n+1}$
\item[(c)] $n \in C_{2n+1}$
\item[(d)] if $\mcA_n \preceq^* \mcC_{2n+1}$ then there is $\mcB' \isom_{\mcA_n} \mcB_n$ such that $\mcB' \preceq^* \mcC_{2n+2}$.
\end{enumerate}
Letting $\mcC_0 = \emptyset$, suppose $(\mcC_i)_{i \le 2n}$ has been defined as above.
By no maximal models from Definition \ref{def:strong_substructure}, find some $\mcC_{2n+1} \in \mcF$ with $\mcC_{2n} \prec^* \mcC_{2n+1}$. Since $\mcF$ is closed under isomorphism, we can assume without loss of generality that $C_{2n+1} \subseteq \omega$ and $n \in C_{2n+1}$.
If $\mcA_n \preceq^* \mcC_{2n+1}$ then by the amalgamation property on strong substructures, find $\mcC_{2n+2} \in \mcF$ and $\mcB_n' \in \mcF$ such that $\mcB_n' \isom_{\mcA_n} \mcB_n$ and $\mcC_{2n+1} \preceq^* \mcC_{2n+2}$ and $\mcB_n' \preceq^* \mcC_{2n+2}$.
Since $\mcF$ is closed under isomorphism, without loss of generality we can assume that $C_{2n+2} \subseteq \omega$.
Thus we have established such a sequence $(\mcC_n)_{n \in \omega}$ exists.

Let $\mcM$ be the limit of the chain $\mcC_0 \preceq_\mcL \mcC_1 \preceq_\mcL \dots$.
By (a) and (c) the universe of $\mcM$ is $\omega$.
We also observe that each $\mcC_n \prec^* \mcM$.
Indeed, fixing arbitrary finite $\mcD$ with $\mcC_n \preceq_\mcL \mcD \prec_\mcL \mcM$ we can choose $m \ge n$ large enough such that $D \subseteq C_m$ (which exists by (a)) and so $\mcC_n \prec^* \mcC_m$ by transitivity and (b), and so by downward closure we have $\mcC_n \prec^* \mcD$ as desired.

We must show that $\mcM$ is ultrahomogeneous on strong substructures.
In particular, as in the proof of Proposition \ref{prop:unique_limit}, we just need to see that the set
\[ \mcI = \{\pi : \mcA \isom \mcB \mid \mcA, \mcB \in \mcF, \; \mcA, \mcB \prec^* \mcM\}\]
has the back-and-forth property.

Given $\pi : \mcA \isom \mcB \in \mcI$ and $c \in M$ we need to find an extension $\pi' : \mcA' \isom \mcB'$ of $\pi$ with $c \in A'$.
By (c) choose $n$ large enough such that $A \cup\{c\} \subseteq C_n$.
Since $\mcA \prec^* \mcM$ by definition we have $\mcA \prec^* \mcC_n$.
Extend $\pi$ arbitrarily to $\hat{\pi} : \mcC_n \isom \hat{\mcB}$ where $\hat{\mcB} \in \mcF$ satisfies $\mcB \preceq_\mcL \hat{\mcB}$.
By invariance we have $\mcB \preceq^* \hat{\mcB}$.
Choose $m$ such that $\mcA_m = \mcB$ and $\mcB_m \isom_{\mcA_m} \hat{\mcB}$, and large enough such that also $B \subseteq C_m$.
Then by (d) there is $\mcD \isom_{\mcB} \hat{\mcB}$ such that $\mcD \preceq^* \mcC_{2m+2}$.
Let $\sigma : \hat{\mcB} \isom_{\mcB} \mcD$ and define $\pi^* :\mcC_{n} \isom \mcD$ by $\pi^* = \sigma \circ \hat{\pi}$.
We have that $\pi^* \upharpoonright A = \pi$ and moreover $\mcD \prec^* \mcM$ by Lemma \ref{lem:strong_substructure} since $\mcD \preceq^* \mcC_{2m+2}$.
Thus $\pi^*$ is the desired extension of $\pi$.

The other direction of the back-and-forth property is similar.
It is nonempty since by positivity we have $\emptyset \prec^* \mcM$ and so it has the empty function.
\end{proof}

\subsection{Closure operators and independence relations}

We begin by considering definable closure and pseudo-algebraic closure in an abstract sense, as \emph{closure operators}.

A \emph{closure operator} on a set $I$ is a function $\cl: \mathcal{P}(I) \rightarrow \mathcal{P}(I)$ satisfying for every $A, B \subseteq I$:
\begin{enumerate}
    \item $A \subseteq \cl(A)$;
    \item $\cl(A) \subseteq \cl(B)$ whenever $A \subseteq B$; and
    \item $\cl(\cl(A)) = \cl(A)$.
\end{enumerate}
Note that sometimes the axioms of closure operators demand that $\cl(\emptyset) = \emptyset$, though we will not (and should not) demand that here.
In the case that $\cl(A) = A$, we say that $A$ is \emph{closed}.
We say that  $\cl$ is \emph{non-trivial} when $\cl(\emptyset) \neq I$.
The closure operators considered in this paper will have \emph{finite character}, meaning that $\cl(A)$ is the union of $\cl(A_0)$ where $A_0$ ranges over finite subsets of $A$. 
We will adopt the standard notation to sometimes write $A \cup B$ as simply $AB$  and $A \cup \{b\}$ as simply $Ab$, for $A, B \subseteq I$ and $b \in I$.

Given $B, C$ finite subsets of $I$, say that $b \in B \setminus \cl(C)$ is \emph{minimal in $B$ over $C$} iff for every $b' \in (B \cap \cl(bC)) \setminus \cl(C)$, we have $b \in \cl(b'C)$.

\begin{lemma}
Suppose $\cl$ is a closure operator on $I$ and $B \supseteq C$ are two finite subsets of $I$ such that $B \not\subseteq \cl(C)$. Then there exists some $b \in B \setminus \cl(C)$ which is minimal in $B$ over $C$.
\end{lemma}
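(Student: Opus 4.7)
The plan is to set up a finite preorder on $B \setminus \cl(C)$ and extract a minimal element. Concretely, define a binary relation on $B \setminus \cl(C)$ by
\[ b' \le b \iff b' \in \cl(bC). \]
The minimality condition in the statement is precisely that $b$ has no strict predecessor under $\le$: if $b' \le b$ with $b' \in B \setminus \cl(C)$, then $b \le b'$.

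First I would check that $\le$ is a preorder (reflexive and transitive). Reflexivity follows from axiom (1) of a closure operator, since $b \in \{b\} \cup C \subseteq \cl(bC)$. For transitivity, suppose $b'' \le b'$ and $b' \le b$, i.e.\ $b'' \in \cl(b'C)$ and $b' \in \cl(bC)$. Then $\{b'\} \cup C \subseteq \cl(bC)$, so by monotonicity (axiom (2)) and idempotence (axiom (3)),
\[ \cl(b'C) \subseteq \cl(\cl(bC)) = \cl(bC), \]
whence $b'' \in \cl(bC)$, that is, $b'' \le b$.

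Now, because $B \not\subseteq \cl(C)$, the set $B \setminus \cl(C)$ is a nonempty finite subset of $I$. Any finite preorder has a minimal element: starting from any element, each strict descent produces a new element (since strict descent means $b' \le b$ but not $b \le b'$, so $b$ and $b'$ lie in different $\le$-equivalence classes), and finiteness forces the process to terminate at some $b \in B \setminus \cl(C)$ with no strict predecessor. By the translation above, this $b$ is minimal in $B$ over $C$ in the sense of the definition.

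There is no real obstacle here; the only thing to verify carefully is that the relation $\le$ is genuinely transitive using only the three closure-operator axioms, which comes down to the computation $\cl(b'C) \subseteq \cl(bC)$ above. Once that is in place, the lemma reduces to the trivial fact that a finite preorder has a minimal element.
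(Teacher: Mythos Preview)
Your argument is correct and matches the paper's proof essentially verbatim: the paper defines the same preorder $b \le b'$ iff $b \in \cl(b'C)$ on $B \setminus \cl(C)$, remarks that it is easily seen to be a preorder, and then picks a $\le$-minimal element using finiteness. You have simply spelled out the verification of reflexivity and transitivity that the paper leaves to the reader.
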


\begin{proof}
Define on $B \setminus \cl(C)$ a binary relation $\le$ by $b \le b'$ iff $b \in \cl(b'C)$.
This is easily seen to be a preorder.
As $B \setminus \cl(C)$ is finite, we may choose a $\le$-minimal element.
\end{proof}

Given such a closure operator $\cl$ on a set $I$ and subsets $A, B, C \subseteq I$ , write 
\[A \forkindep[C] B \quad \text{iff} \quad A \cap \cl(B) \subseteq \cl(C) \; \text{and} \; B \cap \cl(A) \subseteq \cl(C).\]
We call $\forkindep$ \textbf{the independence relation derived from $\cl$}.

\begin{lemma}\label{lem:fork_props}
Suppose $\forkindep$ is the independence relation derived from a closure operator $\cl$ on a set $I$. For any $A, B, C \subseteq I$, the following hold:
\begin{description}
    \item[Existence] If $B \subseteq \cl(C)$ then $A \forkindep[C] B$.
    \item[Symmetry] We have
    \[ A \forkindep[C] B \quad \text{iff} \quad B \forkindep[C] A\]
    \item[Monotonicity] Whenever $A' \subseteq A$, $B' \subseteq B$, and $C' \supseteq C$, we have
    \[ A \forkindep[C] B\quad  \text{implies} \quad A' \forkindep[C'] B' \]
    \item[Finite character] $A \forkindep[C] B$ iff for every finite $A_0 \subseteq A$ and $B_0 \subseteq B$ there exists finite $C_0 \subseteq C$ such that $A_0 \forkindep[C_0] B_0$.
    \item[Weak transitivity] For finite $A, B, C$ with $C \subseteq A$ and $C \subseteq B$ if $b \in B \setminus \cl(C)$ is minimal in $B$ over $C$, then 
    \[ A \forkindep[C] bC \quad \text{and} \quad  A \forkindep[bC] B \quad \text{together imply} \quad A \forkindep[C] B. \]
\end{description}
\end{lemma}

\begin{proof}
Let $\cl$ be the closure operator from which $\forkindep$ is derived.
Existence, symmetry, and monotonicity are immediate from the definition of $\forkindep$ and properties of $\cl$.

To prove finite character, let's first assume $A \forkindep[C] B$ and let $A_0 \subseteq A$ and $B_0 \subseteq B$ be finite.
For each $a \in A_0 \cap \cl(B)$ by assumption we have $a \in \cl(C)$ and thus by finite character of $\cl$ there is some finite $C^0_a \subseteq C$ such that $a \in \cl(C^0_a)$.
Similarly fix for each $b \in B_0 \cap \cl(A)$ a finite set $C^1_b \subseteq C$ such that $b \in \cl(C^1_b)$.
Then letting $C_0 = [\bigcup_{a} C^0_a] \cup [\bigcup_{b} C^1_b]$ we see that $C_0 \subseteq C$ is finite and $A_0 \forkindep[C_0] B_0$.
On the other hand, if $A \not\forkindep[C] B$ then either there is some $a \in A \cap \cl(B)$ with $a \not\in \cl(C)$ or some $b \in B \cap \cl(A)$ with $b \not\in \cl(C)$.
In the first case, by finite character of $\cl$ we can find some finite $B_0 \subseteq B$ such that $a \in A \cap \cl(B_0)$ and thus $a \not\forkindep[C_0] B_0$ for any finite $C_0 \subseteq C$ by monotonicity.
The other case is similar.

Now we prove weak transitivity.
Let $A, B, C \subseteq I$ be finite, $b \in B \setminus \cl(C)$ be minimal in $B$ over $C$, and $A \forkindep[C] bC$ and $A \forkindep[bC] B$.
To show that $A \forkindep[C] B$ we first fix an arbitrary $a \in A \cap \cl(B)$. By $A \forkindep[bC] B$ we conclude $a \in \cl(bC)$ and then by $A \forkindep[C] bC$ we conclude $a \in \cl(C)$ as desired. 
Second, fix an arbitrary $b' \in B \cap \cl(A)$ and assume for contradiction $b' \not\in \cl(C)$. 
Then by $A \forkindep[bC] B$ we have that $b' \in \cl(bC)$. 
By minimality of $b$, we have $b \in \cl(b'C)$.
Since $b' \in \cl(A)$ we have $b \in \cl(A)$ and so by $A \forkindep[C] bC$, we have $b \in \cl(C)$.
This in turn implies $b' \in \cl(C)$, which is a contradiction.
\end{proof}

\subsection{Invariant closure operators}

We will ultimately be defining closure operators on structures, and we will need them to have invariance properties with respect to automorphisms.
In the meantime we will continue our discussion of closure operators in the abstract, but now taking into account the existence of a group action on the underlying set.

When there is some group action $P \curvearrowright I$, we say that a closure operator $\cl$ on $I$ is 
\emph{$P$-invariant} (or just \emph{invariant} when there is no chance of confusion) iff for every $A \subseteq I$ and $\pi \in P$, we have $\pi[\cl(A)] = \cl(\pi[A])$. 
Given sets $A, B, C \subseteq I$, we write $A \isom^P_C B$ to denote that there is some $\pi \in P$ such that $\pi[A] = B$ and $\pi(c) = c$ for every $c \in C$.
In practice, we will drop the superscript.

The following property of invariant closure operators comes in handy.
\begin{lemma}\label{lem:one_side}
Suppose $P \curvearrowright I$ and $\cl$ is an invariant closure operator on $I$. Then for any $a, b \in I$ and $C \subseteq I$ finite, if for every $a' \isom_{C} a$ we have $a' \in \cl(bC)$, then for every $a' \isom_{C} a$ and $b' \isom_{C} b$ we have $a' \in \cl(b' C)$.
\end{lemma}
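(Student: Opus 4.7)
The plan is to prove this by a direct pushforward argument: the hypothesis already secures $a' \in \cl(bC)$ for every $a' \isom_C a$, so to upgrade $b$ to an arbitrary $C$-conjugate $b'$, I just need to transport the conclusion along a witnessing automorphism and use the $P$-invariance of $\cl$.

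More concretely, fix arbitrary $a' \isom_C a$ and $b' \isom_C b$. Choose some $\pi \in P$ witnessing $b' \isom_C b$, i.e.\ with $\pi(b) = b'$ and $\pi|_C = \id_C$. Let $a'' := \pi^{-1}(a')$. Since $\pi$ fixes $C$ pointwise, we have $a'' \isom_C a'$, and composing with the given witness of $a' \isom_C a$ yields $a'' \isom_C a$. By the hypothesis applied to $a''$, we obtain $a'' \in \cl(bC)$. Now apply $\pi$ and use invariance of $\cl$:
\[
a' = \pi(a'') \in \pi[\cl(bC)] = \cl(\pi[bC]) = \cl(b'C),
\]
where the last equality uses $\pi(b) = b'$ together with $\pi|_C = \id_C$.

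There is no real obstacle here; the argument is essentially a bookkeeping exercise showing that the $P$-invariance of $\cl$ together with the fact that $\isom_C$ is (by definition) the orbit equivalence relation of the pointwise $C$-stabilizer of $P$ lets one symmetrize a one-sided hypothesis into a two-sided one. The only thing worth double-checking is that $\isom_C$ is genuinely transitive — which it is, since $P$ is a group and the pointwise $C$-stabilizer is a subgroup — so that the composition $a'' \isom_C a' \isom_C a$ really does yield $a'' \isom_C a$, enabling the application of the hypothesis.
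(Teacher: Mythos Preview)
Your proof is correct and is essentially identical to the paper's own argument: both fix $a' \isom_C a$ and $b' \isom_C b$, choose $\pi \in P$ with $\pi(b) = b'$ and $\pi \upharpoonright C = \id_C$, observe that $\pi^{-1}(a') \isom_C a$ so the hypothesis gives $\pi^{-1}(a') \in \cl(bC)$, and then push forward by $\pi$ using invariance. The paper is just slightly terser, omitting the explicit justification of $\pi^{-1}(a') \isom_C a$ that you spell out.
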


\begin{proof}
Let $a' \isom_C a$ and $\sigma : b' \isom_C b$. By assumption $\sigma(a') \in \cl(bC)$ and by invariance of $\cl$ we have $a' \in \cl(b'C)$.
\end{proof}

We will usually be interested in the case that we have an $\mcL$-structure $\mcM$ where $\mcL$ is a countable relational language, and $\cl$ is a closure operator on $M$ which is invariant with respect to the natural action $\Aut(\mcM) \curvearrowright M$.
Definable closure and pseudo-algebraic closure on a structure $\mcM$ are easily seen to be invariant closure operators with respect to this action.

\subsection{The disjointifying property}

We now define a new property of invariant closure operators on sets $I$ with group action $P \curvearrowright I$.
Let $\forkindep$ be the independence relation derived from an invariant closure operator $\cl$ on $I$.
We say that $\cl$ is \textbf{disjointifying} iff it is invariant, finite character, and whenever $A, B, C$ are finite subsets of $I$, with $C \subseteq A$ and $C \subseteq B$ there exists a finite subset $A'$ of $I$ such that $A' \isom_C A$ and $A' \forkindep[C] B$.

In the following proposition, we prove three additional statements equivalent to an invariant closure operator $\cl$ being disjointifying. 
Statement (3) below shows that in the definition of the disjointifying property, it is enough to consider $A$ and $B$ when $|A \setminus C| = |B \setminus C| = 1$. 
Statement (4) is helpful for verifying the disjointifying property as it allows one to check two easier statements separately.

\begin{proposition}\label{prop:cl_equivalence}
Suppose $\cl$ is an invariant finite-character closure operator on $P \curvearrowright I$ with derived independence relation $\forkindep$. The following are equivalent:
\begin{enumerate}
    \item $\cl$ is disjointifying;
    \item for any finite $C \subseteq I$ and $B \subseteq I$ with $C \subseteq B$ and $a \in I$, there is some $a' \isom_C a$ such that $a'C \forkindep[C] B$; and
    \item for any finite $C \subseteq I$ and $a, b \in I$, there is some $a' \isom_C a$ such that $a'C \forkindep[C] bC$;
    \item for any finite $C \subseteq I$ and $a, b \in I$ with $a \not\in \cl(C)$, the following both hold:
    \begin{enumerate}
        \item there is some $a' \isom_C a$ such that $a'C \forkindep[C] aC$; and
        \item there is some $a' \isom_C a$ such that $a' \not\in \cl(bC)$.
    \end{enumerate}
\end{enumerate}

\end{proposition}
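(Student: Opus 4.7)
The implications $(1) \Rightarrow (2) \Rightarrow (3) \Rightarrow (4)$ are immediate specializations. For $(1) \Rightarrow (2)$, take $A = \{a\} \cup C$ and observe that the two conditions defining $A' \forkindep[C] B$ reduce exactly to the two conditions defining $a' \forkindep[C] B$. $(2) \Rightarrow (3)$ is the analogous specialization $B = \{b\} \cup C$. For $(3) \Rightarrow (4\text{a})$, specialize $b := a$ in $(3)$. For $(3) \Rightarrow (4\text{b})$, unpack $a' \forkindep[C] b$ and use that $a' \notin \cl(C)$ (which follows from $a \notin \cl(C)$ by invariance of $\cl$) to conclude $a' \notin \cl(bC)$ from the first clause $a' \cap \cl(bC) \subseteq \cl(C)$.

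For the converse, the plan is $(4) \Rightarrow (3) \Rightarrow (2) \Rightarrow (1)$; the last two implications I will establish by induction using weak transitivity. For $(3) \Rightarrow (2)$, induct on $|B \setminus C|$: the base $B \subseteq \cl(C)$ is immediate with $a' = a$; for the inductive step, pick $b \in B \setminus \cl(C)$ minimal in $B$ over $C$, apply $(3)$ to get $a_1 \isom_C a$ with $a_1 \forkindep[C] b$, then apply the inductive hypothesis with base $bC$ and element $a_1$ to obtain $a^* \isom_{bC} a_1$ with $a^* \forkindep[bC] B$. Invariance of $\cl$ under the automorphism fixing $bC$ transports $a_1 \forkindep[C] b$ to $a^* \forkindep[C] b$, and Lemma \ref{lem:fork_props}(3) combines these to give $a^* \forkindep[C] B$. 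The argument for $(2) \Rightarrow (1)$ is the analogous induction on $|A \setminus C|$, using the symmetry of $\forkindep$ (Lemma \ref{lem:fork_props}(1)) to split $A$ rather than $B$.

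The crux is $(4) \Rightarrow (3)$, which I expect to be the chief obstacle. Reduce to the case $a, b \notin \cl(C)$, since otherwise $a' = a$ works trivially. The plan is to first invoke $(4\text{a})$ to produce $a_0 \isom_C a$ with $a_0 \forkindep[C] a$, a ``generic template'' in the orbit of $a$. Since $a \notin \cl(a_0 C)$, apply $(4\text{b})$ over the enlarged base $a_0 C$ with element $a$ and auxiliary $b$, obtaining $a^* \isom_{a_0 C} a$ with $a^* \notin \cl(b a_0 C)$. By invariance under the automorphism fixing $a_0 C$ pointwise, we get $a^* \isom_C a$, $a^* \notin \cl(a_0 C)$, $a_0 \notin \cl(a^* C)$, and hence $a^* \forkindep[C] a_0$. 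Monotonicity then yields $a^* \notin \cl(bC)$, which is one half of $a^* \forkindep[C] b$.

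The subtlest remaining step is to verify $b \notin \cl(a^* C)$, and I plan to argue this by contradiction. If $b \in \cl(a^* C)$, then transporting via the automorphism fixing $a_0 C$ and sending $a^* \mapsto a$ yields some $b' \isom_{a_0 C} b$ with $b' \in \cl(aC)$, and simultaneously transports $a^* \notin \cl(b a_0 C)$ to $a \notin \cl(b' a_0 C)$. Combining this with $a_0 \forkindep[C] a$, the resulting containment $\cl(b' a_0 C) \subseteq \cl(a a_0 C)$, and an application of Lemma \ref{lem:one_side} to pass between $b$ and its $\isom_C$-images, should produce the desired contradiction. This is the step where the ``two-sided'' nature of $\forkindep$ must be recovered from the one-sided escape hypothesis $(4\text{b})$, aided by the generic independent template from $(4\text{a})$; hence the author's emphasis that the equivalence of $(4)$ is useful precisely because it splits the disjointifying property into two concretely checkable pieces.
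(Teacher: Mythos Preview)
Your inductive arguments for $(3)\Rightarrow(2)$ and $(2)\Rightarrow(1)$ match the paper's: it too inducts on $|B\setminus\cl(C)|$ (respectively $|A\setminus\cl(C)|$; your ``$|B\setminus C|$'' is presumably a slip), selects a minimal element, and combines via Lemma~\ref{lem:fork_props}(3).

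The gap is in $(4)\Rightarrow(3)$. Your $a^*$ satisfies $a^*\notin\cl(bC)$, but the second half $b\notin\cl(a^*C)$ is not established. Under the negation of $(3)$ for this fixed $a,b,C$, \emph{every} $a'\cong_C a$ with $a'\notin\cl(bC)$ must satisfy $b\in\cl(a'C)$; so your $a^*$ is forced to have $b\in\cl(a^*C)$, and the burden falls entirely on the contradiction argument. But from the transported data $b'\in\cl(aC)$, $a\notin\cl(b'a_0C)$ you obtain only the innocuous strict containment $\cl(b'a_0C)\subsetneq\cl(aa_0C)$ (of course $a\in\cl(aa_0C)$), and Lemma~\ref{lem:one_side} requires a hypothesis quantified over an entire orbit, which nothing in your setup supplies. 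I could not find a way to close this; the ``generic template'' $a_0$ is not doing enough work, and the word ``should'' in your last paragraph is exactly where the argument fails.

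The paper's route for $(4)\Rightarrow(3)$ is different in kind. It passes to contrapositives: assuming $a\notin\cl(C)$ and that some $b\notin\cl(C)$ has every $b'\cong_C b$ satisfying $a\in\cl(b'C)$ or $b'\in\cl(aC)$, it defines a preorder on the orbit of $a$ by
\[
a'\le a'' \quad\Longleftrightarrow\quad \forall\, b'\cong_C b,\ \bigl(a''\in\cl(b'C)\ \Rightarrow\ a'\in\cl(b'C)\bigr),
\]
and proves this preorder is \emph{linear}; linearity is precisely where the two-sidedness is recovered. It then shows $a'\le a''\Rightarrow a'\in\cl(a''C)$ using (the contrapositive of) $(4\mathrm{b})$ over the enlarged base $a''C$. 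Linearity now forces, for every $a'\cong_C a$, either $a\in\cl(a'C)$ or $a'\in\cl(aC)$, contradicting $(4\mathrm{a})$. Your direct construction of a single witness does not substitute for this global comparison of orbit elements.
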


\begin{proof}
Clearly (1) implies (2), (2) implies (3), and (3) implies (4).
However showing (4) implies (1) is not easy, and instead we show each step backwards. In order we show (3) implies (2), (2) implies (1), then (4) implies (3).

We prove (3) implies (2).
We will induct on $|B \setminus \cl(C)|$.
For the base case, let $B, C \subseteq I$ finite with $C \subseteq B$ and $B \subseteq \cl(C)$.
Then for any $a \in I$ by existence we have $aC \forkindep[C] B$.

For the induction step, let $B, C \subseteq I$ be finite and assume $|B \setminus \cl(C)| > 0$ and that the claim holds for lesser values of $|B \setminus \cl(C)|$.
Let $a \in I$ and fix some $b_0 \in B \setminus \cl(C)$ minimal in $B$ over $C$.
By assumption find $a' \isom_C a$ such that $a'C \forkindep[C] b_0C$. 
By the induction hypothesis, find some $a'' \isom_{b_0C} a'$ as witnessed by some $\pi \in P$ such that $a''C \forkindep[b_0C] B$.
Clearly $a'' \isom_C a$.
By invariance with respect to $\pi$ we have $a''C \forkindep[C] b_0C$, and then by weak transitivity we have $a''C \forkindep[C] B$ as desired.

The proof that (2) implies (1) is similar.
We induct on $|A \setminus \cl(C)|$. 
If this is 0, then as before the conclusion follows by existence.
Otherwise, assume $|A \setminus \cl(C)| > 0$ and the claim holds for lesser values.
Let $a_0 \in A \setminus \cl(C)$ minimal in $A$ over $C$.
Find $a_0' \isom_C a_0$ such that $a_0' \forkindep[C] B$, as witnessed by some $\pi \in P$.
Define $A' := \pi[A]$ so that $A' \isom_C A$.
By invariance of $\cl$, we have $a_0' \in A'$ is minimal in $A'$ over $C$.
By the induction hypothesis, find $A'' \isom_{a_0'C} A'$ such that $A'' \forkindep[a_0'C] B$.
Observe that $a_0'$ is minimal in $A''$ over $C$ as well, by invariance of $\cl$.
By weak transitivity we have $A'' \forkindep[C] B$ as desired.

Finally we see that (4) implies (3). 
We prove this in a few steps via the following intermediate statements
\begin{enumerate}
    \item[(3*)] Given $a, C$ suppose there exists some $b \not\in \cl(C)$ such that for every $b' \isom_C b$, either $a \in \cl(b'C)$ or $b' \in \cl(aC)$. Then $a \in \cl(C)$.
    \item[(4*)] Given $a, C$ suppose either of the following holds:
    \begin{enumerate}
        \item[(a)] there exists some $b$ such that for every $b' \isom_C b$, $a \in \cl(b'C)$; or
        \item[(b)] for any $a' \isom_C a$ either $a \in \cl(a'C)$ or $a' \in \cl(aC)$.
    \end{enumerate}
    Then $a \in \cl(C)$.
\end{enumerate}
We show that (4) implies (4*) and (4*) implies (3*) and then (3*) implies (3).

Let's show that (4) implies (4*).
Let $a \in I$ and $C \subseteq I$ finite.
First let's suppose (4*).(a) holds, i.e. we have some $b \in I$ such that for every $b' \isom_C b$ we have $a \in \cl(b'C)$.
If we assume for the sake of contradiction that $a \not\in \cl(C)$ then by (4).(b) there is some $a' \isom_C a$ as witnessed by some $\pi \in P$ such that $a' \not\in \cl(bC)$.
Letting $b' := \pi^{-1}(b)$ we have by invariance of $\cl$ that $a \not\in \cl(b'C)$ where $b' \isom_C b$, which contradicts our choice of $b$.
Next, let's suppose (4*).(b) holds, i.e. for any $a' \isom_C a$ either $a \in \cl(a'C)$ or $a' \in \cl(aC)$.
If we assume for the sake of contradiction that $a \not\in \cl(C)$ then by (4).(a) there is some $a' \isom_C a$ such that $a'C \forkindep[C] aC$, or in other words
\[ a'C \cap \cl(aC) \subseteq \cl(C) \quad \text{and} \quad aC \cap \cl(a'C) \subseteq \cl(C). \]
However if $a \in \cl(a'C)$ then by the right inclusion we have $a \in \cl(C)$ which is a contradiction, and if $a' \in \cl(aC)$ then by the left inclusion we have $a' \in \cl(C)$ and thus $a \in \cl(C)$ by invariance of $\cl$, which is also a contradiction.

Next we show that (4*) implies (3*)
Suppose we're given $a, C$ and some $b \not\in \cl(C)$ such that for every $b' \isom_C b$, either $a \in \cl(b'C)$ or $b' \in \cl(aC)$. 
Assume for contradiction that $a \not\in \cl(C)$. 

We in fact have that for every $a' \isom_C a$ and $b' \isom_C b$, either $a' \in \cl(b'C)$ or $b' \in \cl(a'C)$.
Indeed, let $\pi$ witness $a' \isom_C a$ and let $b'' := \pi(b')$.
Then we have $b'' \isom_C b$ and so either $a \in \cl(b''C)$ or $b'' \in \cl(aC)$.
In the first case by invariance of $\cl$ with respect to $\pi^{-1}$ we have $a' \in \cl(b'C)$ and in the second case for the same reason we have $b' \in \cl(a'C)$ as desired.

Define a relation $\le$ on the set of all $a'$ with $a' \isom_C a$ by
\[ a' \le a'' \quad \text{iff} \quad \forall b' \isom_C b, \; (a'' \in \cl(b'C) \rightarrow a' \in \cl(b'C)). \]
We argue that this is in fact a linear preorder.
Reflexivity and transitivity of $\le$ are clear. 
To verify trichotomy, suppose for contradiction there exist some $a_0, a_1 \isom_C a$ with $a_0 \not\le a_1$ and $a_1 \not\le a_0$ or in other words, there are some $b_0, b_1 \isom_C b$ with 
\[\text{[i]} \; a_1 \in \cl(b_0C) \quad \text{and} \quad \text{[ii]} \; a_0 \not\in \cl(b_0C)\]
and 
\[\text{[iii]} \; a_1 \not\in \cl(b_1C) \quad \text{and} \quad \text{[iv]} \; a_0 \in \cl(b_1C).\]
By [iii], we have $b_1 \in \cl(a_1C)$ and thus by [iv] we have $a_0 \in \cl(a_1C)$. 
But that implies $a_0 \in \cl(b_0C)$ by [i], which contradicts [ii].

Now we show that $a' \le a''$ implies that $a' \in \cl(a''C)$.
Since either $a \le a'$ or $a' \le a$ for every $a' \isom_C a$, we would be done because (4*).(b) would hold and we would have $a \in \cl(C)$, a contradiction.

Assume $a' \le a''$.
We may find some $b' \isom_C b$ such that $a'' \in \cl(b'C)$.
Indeed, otherwise we would have $b' \in \cl(a''C)$ for every $b' \isom_C b$.
Thus $b' \in \cl(a'''C)$ for every $a''' \isom_C a$ by Lemma \ref{lem:one_side}, allowing us to conclude $b \in \cl(C)$ by (4*).(a), a contradiction.

Then of course by invariance we have $a'' \in \cl(b''C)$ for every $b'' \isom_{a''C} b'$ and since $a' \le a''$ we thus have $a' \in \cl(b''C)$ for every $b'' \isom_{a''C} b'$.
By (4*).(a), we can conclude $a' \in \cl(a''C)$ as desired.

Finally, we show that (3*) implies (3).
Assume (3*) and let finite $C \subseteq I$ and $a \in I$ be arbitrary.
Assume for the sake of contradiction that there is some $b \in I$ such that for every $a' \isom_C a$, we have $a'C \not\forkindep[C] bC$.
We must have $a, b \not\in \cl(C)$ otherwise $aC \forkindep[C] bC$ would follow by existence and symmetry.
We claim that
\[ \forall b' \isom_C b, \; a \in \cl(b'C) \quad \text{or} \quad b' \in \cl(aC). \]
This suffices as by (3*) we would conclude $a \in \cl(C)$ which would be a contradiction.
To that end, fix arbitrary $b' \isom_C b$ as witnessed by some $\pi \in P$.
Define $a' := \pi(a)$ in which case we have $a' \isom_C a$ and so $a'C \not\forkindep[C] bC$ by assumption.
By invariance under $\pi$ we have $aC \not\forkindep[C] b'C$ and thus we have
\[ aC \cap \cl(b'C) \not\subseteq \cl(C) \quad \text{or} \quad b'C \cap \cl(aC) \not\subseteq \cl(C). \]
In the first case, we conclude that $a \in \cl(b'C)$ and in the second case we conclude $b' \in \cl(aC)$ as desired.
\end{proof}

Recall that $\cl$ is called non-trivial iff $\cl(\emptyset) \neq I$.
In fact, if $\cl$ is non-trivial and disjointifying then $I \neq \cl(B_0)$ for any finite $B_0 \subseteq I$.
Indeed, if we fix $a \not\in \cl(\emptyset)$ then by the disjointifying property we find $a' \isom_\emptyset a$ such that $a' \forkindep[\emptyset] B_0$, we would have to conclude that $a' \not\in B_0$ since by invariance we would also have $a' \not\in \cl(\emptyset)$.

\subsection{Disjointifying closure operators and involving $S_\infty$}

We now finish this section by proving the following theorem:

\begin{theorem}\label{thm:disjointifying_closure_implies_involve}
Let $I$ be a countably-infinite set and $P \le S_I$ closed with the natural action $P \curvearrowright I$. If $I$ has a nontrivial disjointifying closure operator $\cl$, then $P$ involves $S_\infty$.
\end{theorem}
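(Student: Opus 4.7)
The plan is to mimic the Baldwin-Friedman-Koerwien-Laskowski argument sketched after Proposition~\ref{prop:bfkl}, replacing disjoint amalgamation with the disjointifying property of $\cl$. Identify $P$ with $\Aut(\mcM)$ for some ultrahomogeneous countable $\mcL$-structure $\mcM$ on $I$. The goal is to produce a coloring $\oc : I \to \omega$ in which every color is realized infinitely often, such that the closed subgroup
\[
H := \{\pi \in P : \text{there exists } \sigma \in S_\omega \text{ with } \oc \circ \pi = \sigma \circ \oc\}
\]
carries a continuous homomorphism $\phi : H \to S_\omega \isom S_\infty$ sending $\pi$ to its uniquely-determined color permutation $\sigma_\pi$, and such that $\phi$ is surjective; this witnesses that $P$ involves $S_\infty$.

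To build $\oc$, I enrich $\mcL$ to $\mcL^* := \mcL \cup \{U_n\}_{n < \omega}$ with new unary predicates, and let $\mcF^*$ be the class of finite $\mcL^*$-structures $(\mcA, \oc_\mcA)$ whose $\mcL$-reducts lie in $\Age(\mcM)$ and where $\oc_\mcA : A \to \omega$ is arbitrary. Declare $(\mcA, \oc_\mcA) \preceq^* (\mcB, \oc_\mcB)$ iff $\mcA \preceq_\mcL \mcB$, the colorings agree on $A$, and for some (equivalently, by ultrahomogeneity of $\mcM$ and invariance of $\cl$, every) embedding $\iota : \mcB \hookrightarrow \mcM$ we have $\iota(A) = \cl(\iota(A)) \cap \iota(B)$; that is, $A$ is closure-closed inside $B$. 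The crux is amalgamation of $(\mcF^*,\preceq^*)$: given $\mcA \preceq^* \mcB$ and $\mcA \preceq^* \mcC$, realize all three structures inside $\mcM$ with $A \subseteq B \cap C$; by the disjointifying property of $\cl$, pick $B' \subseteq I$ with $B' \isom_A B$ and $B' \forkindep[A] C$, and transport the coloring from $\mcB$ to $\mcB'$. Using $B' \forkindep[A] C$ together with the closure-closedness of $A$ inside $B'$ (inherited from $B$ via $P$-invariance of $\cl$), one computes
\[
B' \cap C \;\subseteq\; B' \cap \cl(AC) \;\subseteq\; B' \cap \cl(A) \;=\; A,
\]
so the colorings $\oc_{\mcB'}$ and $\oc_\mcC$ agree on their overlap and glue to a coloring of $\mcD := \mcM \restriction (B' \cup C)$; analogous calculations give $\cl(B') \cap C \subseteq A \subseteq B'$ and symmetrically $\cl(C) \cap B' \subseteq A \subseteq C$, so $\mcB', \mcC \preceq^* \mcD$.

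After handling the minimum-object issue when $\cl(\emptyset) \neq \emptyset$ --- either restrict $\mcF^*$ to structures whose embedding avoids $\cl(\emptyset)$ (possible since $\cl(\emptyset) \subsetneq I$ by nontriviality) and recover a coloring on $I \setminus \cl(\emptyset)$, or work over the fixed base given by finite substructures of $\mcM \restriction \cl(\emptyset)$ --- Proposition~\ref{prop:limit_theorem} yields a Fra\"{i}ss\'{e} limit $(\mcM, \oc)$ realizing every colored type. Surjectivity of $\phi$ is then a standard back-and-forth: given $\sigma \in S_\omega$, construct $\pi \in H$ with $\phi(\pi) = \sigma$ as the union of a chain of finite partial $P$-isomorphisms $\rho : F^0 \to F^1$ respecting $\oc \circ \rho = \sigma \circ \oc \upharpoonright F^0$, invoking Fra\"{i}ss\'{e} homogeneity of $(\mcM, \oc)$ at each step to absorb the next element with the prescribed color. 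The main obstacle I anticipate is the amalgamation step --- verifying precisely how the disjointifying property interacts with the closure-closedness built into $\preceq^*$ so as to force $B' \cap C \subseteq A$; a secondary subtlety is keeping the back-and-forth coherent when newly-added elements lie inside $\cl(F^0)$, which is handled by closing each stage under $\cl$, exactly what $\preceq^*$ supports.
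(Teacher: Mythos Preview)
Your overall plan is the right one, but there is a genuine gap in the Fra\"{i}ss\'{e} step. Your strong-substructure relation requires $A$ to be \emph{closure-closed} in $B$, i.e.\ $\cl(A)\cap B=A$. This means that in any $\preceq^*$-chain $\emptyset\preceq^* C_1\preceq^* C_2\preceq^*\cdots$, once $C_k$ has been fixed no element of $\cl(C_k)\setminus C_k$ can ever be added at a later stage. If $\cl$ of some finite set is infinite --- and nothing in the definition of a disjointifying closure operator rules this out --- then the Fra\"{i}ss\'{e} limit of your $(\mcF^*,\preceq^*)$ cannot have $\mcL$-reduct isomorphic to $\mcM$, so the coloring $\oc$ you obtain does not live on $I$ and the subgroup $H\le P$ is not defined. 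Your suggested fix of ``closing each stage under $\cl$'' in the back-and-forth presupposes exactly the missing hypothesis that $\cl$ of a finite set is finite.

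A concrete instance: let $I=\omega\times\omega$, let $P\le S_I$ be the group of all permutations preserving the equivalence relation ``same first coordinate'' (so $P\cong S_\omega\wr S_\omega$), and set $\cl(A)=\bigcup\{\{n\}\times\omega:A\cap(\{n\}\times\omega)\neq\emptyset\}$. One checks easily that $\cl$ is an invariant disjointifying closure operator with $\cl(\emptyset)=\emptyset$, yet $\cl(\{(0,0)\})=\{0\}\times\omega$ is infinite. With your $\preceq^*$, every $\preceq^*$-chain freezes each column at the finite stage where it first appears, so the limit has all columns finite and its $\mcL$-reduct is not $\mcM$. The paper's approach sidesteps this by enlarging the color set to $\omega\cup\{\nll\}$ and replacing ``$A$ closure-closed in $B$'' with the weaker requirement that every $b\in(B\cap\cl(A))\setminus A$ receive the color $\nll$. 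Then $\preceq^*$-chains may freely absorb elements of $\cl(C_k)\setminus C_k$ (assigning them $\nll$), so the limit's $\mcL$-reduct is $\mcM$; nontriviality of $\cl$ ensures that non-$\nll$ colors are realized, and only those are used to define the surjection onto $S_\infty$.
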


The remainder of this section will be dedicated to proving this.

\begin{proof}
Let $\mcL$ be the countable relational language with an $n$-ary relation $R_{\bar{a}}$ for each $n \in \omega$ and $\bar{a} \in I^n$.
We define an $\mcL$-structure $\mcM$ living on $M := I$ where for every $\bar{a}, \bar{b} \in I^{<\omega}$, we have that $R_{\bar{a}}(\bar{b})^{\mcM}$ holds iff there exists some $g \in P$ with $g \cdot \bar{a} = \bar{b}$.
Then $P$ is isomorphic to $\Aut(\mcM)$.
It's easy to check that $\mcM$ is ultrahomogeneous, thus if we let $\mcF$ be the age of $\mcM$, we have that $\mcF$ is a Fra\"{i}ss\'{e} class with limit $\mcM$.

For each $\mcA \in \mcF$, we have the disjointifying closure operator $\cl_{\mcA}$ on $A$ that $\mcA$ inherits as a substructure of $\mcM$.
In particular, given $A_0 \subseteq A$ and $a \in A$ we say $a \in \cl_{\mcA}(A_0)$ iff for some $\pi : \mcA \isom \mcB \prec_\mcL \mcM$ we have $\pi(a) \in \cl(\pi[A_0])$.
This is well-defined: indeed, suppose we also have $\sigma : \mcA \isom \mcC \prec_\mcL \mcM$.
Define $B_0 := \pi[A_0]$ and $b = \pi(a)$ and $C_0 := \sigma[A_0]$ and $c = \sigma(a)$.
Then we have $(\sigma \circ \pi^{-1}) \upharpoonright B_0 : \mcB_0 \isom \mcC_0$, and so by ultrahomogeneity of $\mcM$ we can extend this to some $\rho \in \Aut(\mcM)$.
Let $b' := \rho(b)$.
It suffices to show that $b' \isom_{C_0} c$ because by invariance of $\cl$ we would have $c \in \cl(C_0)$ iff $b' \in \cl(C_0)$ iff $b = \rho^{-1}(b') \in \cl(\rho^{-1}[C_0]) = \cl(B_0)$.
Indeed, define $\theta : b'C_0 \rightarrow cC_0$ by $\theta := (\sigma \circ \pi^{-1} \circ \rho^{-1}) \upharpoonright b'C_0$.
This extends to an automorphism of $\mcM$ by ultrahomogeneity, and $\theta(b') = c$ and $\theta(c_0) = c_0$ for all $c_0 \in C_0$ since $\rho^{-1}(c_0) = \sigma(\pi^{-1}(c_0))$.

\begin{claim}\label{claim:little_closure}
The following are true for any $\mcA, \mcB \in \mcF$:
\begin{enumerate}
\item if $\pi : \mcA \isom \mcB$ then for any $A_0 \subseteq A$ we have $\cl_{\mcB}(\pi[A_0]) = \pi[\cl_{\mcA}(A_0)]$;
\item if $\mcA \preceq_\mcL \mcB$ then for any $A_0 \subseteq A$ we have $\cl_\mcA(A_0) = \cl_\mcB(A_0) \cap A$;
\end{enumerate}
\end{claim}

\begin{proof}
For (1), let $\sigma : \mcA \isom \mcA'$ with $\mcA' \prec_\mcL \mcM$.
Then $\sigma \circ \pi^{-1} : \mcB \isom \mcA'$.
Then $b \in \cl_\mcB(\pi[A_0])$ is equivalent by definition to $\sigma \circ \pi^{-1}(b) \in \cl(\sigma[A_0])$ which is equivalent by definition to $\pi^{-1}(b) \in \cl_\mcA(A_0)$, or in other words, $b \in \pi[\cl_\mcA(A_0)]$.

For (2), let $\sigma : \mcB \isom \mcB'$ where $\mcB' \prec_\mcL \mcM$.
Then $\sigma \upharpoonright A : \mcA \isom \mcA'$ where $\mcA' := \sigma[\mcA] \prec_\mcL \mcM$.
Then for any $a \in A$, by definition $a \in \cl_\mcA(A_0)$ if and only if $\sigma(a) \in \cl(\sigma[A_0])$ if and only if $a \in \cl_\mcB(A_0)$.
\end{proof}

Let $\mcF_c$ be the class of colored $\mcF$-structures, which as before we represent as pairs $(\mcA, \oc_\mcA)$ where $\mcA \in \mcF$ and $\oc_\mcA : A \rightarrow \omega$.
We will think of 0 as a special color, the ``null'' color.
We define a notion of strong substructure as follows.
We say $(\mcA, \oc_\mcA)$ is a strong substructure of $(\mcB, \oc_\mcB)$, denoted $(\mcA, \oc_\mcA) \preceq^* (\mcB, \oc_\mcB)$ if and only if $(\mcA, \oc_\mcA) \preceq_{\mcL_c} (\mcB, \oc_\mcB)$ and moreover $\oc_\mcB(b) = 0$ for every $b \in (B \cap \cl_\mcB(A)) \setminus A$.
Recall that $(\mcA, \oc_\mcA) \preceq_{\mcL_c} (\mcB, \oc_\mcB)$ means that $\mcA \preceq_\mcL \mcB$ and $\oc_\mcB \upharpoonright A = \oc_\mcA$.
Finally, we let $\mcF^+_c$, the set of positive colored $\mcF$-structures, be the set of $(\mcA, \oc_\mcA) \in \mcF_c$ with $(\emptyset, \emptyset) \preceq^* (\mcA, \oc_\mcA)$.
Equivalently, a colored $\mcF$-structure $(\mcA, \oc_\mcA)$ is positive if $\oc_\mcA(a) = 0$ for every $a \in \cl_\mcA(\emptyset)$.

There's a decent number of things we need to check before we can claim that $\preceq^*$ is indeed a notion of strong substructure as in Definition \ref{def:strong_substructure}.
They're all rather straightforward and even a bit tedious to check, but it seems worthwhile to do so in full detail.

\begin{claim}
The relation $\preceq^*$ is a notion of strong substructure on $\mcF_c^+$.
\end{claim}

\begin{proof}
We have a rather large number of conditions from Definition \ref{def:strong_substructure} to check.

For [Preorder], it's trivially reflexive.
To see that it is transitive, suppose we have $(\mcA, \oc_\mcA) \preceq^* (\mcB, \oc_\mcB)$ and $(\mcB, \oc_\mcB) \preceq^* (\mcC, \oc_\mcC)$.
We want to show $(\mcA, \oc_\mcA) \preceq^* (\mcC, \oc_\mcC)$.
We indeed have $(\mcA, \oc_\mcA) \preceq_{\mcL_c} (\mcC, \oc_\mcC)$, so we fix an arbitrary $c \in C \setminus A$ with $c \in \cl_\mcC(A)$.
We need to see that $\oc_\mcC(c) = 0$.
We case on if $c \in B$.
Indeed, if $c \not\in B$ then we have $c \in C \setminus B$ and so from $(\mcB, \oc_\mcB) \preceq^* (\mcC, \oc_\mcC)$ we conclude $\oc_\mcC(c) = 0$.
Otherwise, if $c \in B$ then we have $c \in B \setminus A$ and since $\cl_\mcB(A) = \cl_\mcC(A) \cap B$ by Claim \ref{claim:little_closure}.(2) we have $c \in \cl_\mcB(A)$ and so from $(\mcA, \oc_\mcA) \preceq^* (\mcB, \oc_\mcB)$ we conclude that $\oc_\mcB(c) = 0$ and from $(\mcB, \oc_\mcB) \preceq^* (\mcC, \oc_\mcC)$ we conclude $\oc_\mcC(c) = \oc_\mcB(c) = 0$.

The property [Coarsening] follows directly by the definition of $\preceq^*$.

For [Invariance], let arbitrary $(\mcA, \oc_\mcA) \preceq^* (\mcB, \oc_\mcB)$ and suppose $\pi : (\mcB, \oc_\mcB) \isom (\mcC, \oc_\mcC)$.
Define $(\mcA', \oc_\mcA') := \pi[(\mcA, \oc_\mcA)]$ and so $(\mcA', \oc_{\mcA'}) \preceq_{\mcL_c} (\mcC, \oc_\mcC)$.
We want to show $(\mcA', \oc_\mcA') \preceq^* (\mcC, \oc_\mcC)$.
Indeed, let $c \in C \setminus A'$ with $c \in \cl_\mcC(A')$.
Then $\pi^{-1}(c) \in B \setminus A$ and by Claim \ref{claim:little_closure}.(1) we have $\pi^{-1}(c) \in \cl_\mcB(A)$.
Thus $\oc_\mcB(\pi^{-1}(c)) = 0$ and so $\oc_\mcC(c) = 0$.

For [Downward closure], suppose $(\mcA, \oc_\mcA) \preceq^* (\mcB, \oc_\mcB)$ and $(\mcA, \oc_\mcA) \preceq_{\mcL_c} (\mcC, \oc_\mcC) \preceq_{\mcL_c} (\mcB, \oc_\mcB)$.
We want to show $(\mcA, \oc_\mcA) \preceq^* (\mcC, \oc_\mcC)$.
Indeed, let $c \in C \setminus A$ with $c \in \cl_\mcC(A)$.
Then in fact we have $c \in B \setminus A$ and since by Claim \ref{claim:little_closure}.(2) we have $\cl_\mcC(A) = \cl_\mcB(A) \cap C$ we then have $c \in \cl_\mcB(A)$.
Thus from $(\mcA, \oc_\mcA) \preceq^* (\mcB, \oc_\mcB)$ we conclude $\oc_\mcB(c) = 0$ and so we conclude $\oc_\mcC(c) = 0$.

The property [Positivity] immediately follows by our restriction to $\mcF_c^+$.

Finally we consider [No maximal models].
Let $(\mcA, \oc_\mcA) \in \mcF_c^+$.
We want to see that this is not maximal with respect to $\preceq^*$.
Indeed since $\mcF$ does not have maximal models with respect to $\preceq_\mcL$ we can find $\mcB \in \mcF$ with $\mcA \prec_\mcL \mcB$.
We can then define $\oc_\mcB : B \rightarrow \omega$ by $\oc_{\mcB}(a) = \oc_{\mcA}(a)$ for all $a \in A$ and $\oc_{\mcB}(b) = 0$ for all $b \in B \setminus A$.
Then we get $(\mcB, \oc_\mcB) \in \mcF_c$ and $(\mcA, \oc_\mcA) \preceq^* (\mcB, \oc_\mcB)$.
Since $(\mcA, \oc_\mcA) \in \mcF_c^+$ we have $\emptyset \preceq^* (\mcA, \oc_\mcA)$ and so by transitivity of $\preceq^*$ we get $(\mcB, \oc_\mcB) \in \mcF_c^+$ by definition.

This concludes the proof.
\end{proof}

We have one more thing to check before we can compute the limit of $\mcF^+_c$.

\begin{claim}
$(\mcF^+_c, \preceq^*)$ satisfies the amalgamation property on strong substructures.
\end{claim}

\begin{proof}
Suppose $(\mcA, \oc_{\mcA}), (\mcB, \oc_{\mcB}), (\mcC, \oc_{\mcC}) \in \mcF^+_c$ satisfy $(\mcC, \oc_{\mcC}) \preceq^* (\mcA, \oc_{\mcA})$ and $(\mcC, \oc_{\mcC}) \preceq^* (\mcB, \oc_{\mcB})$.

Since $\mcF$ satisfies the amalgamation property and $\mcM$ is a limit of $\mcF$, we may assume without loss of generality that $\mcA, \mcB \prec_\mcL \mcM$.
Because $\cl$ is disjointifying, we may find some $\mcA' \prec_\mcL \mcM$ with $\mcA' \isom_C \mcA$ as witnessed by $\pi \in \Aut(\mcM)$ such that $\cl(A') \cap B \subseteq \cl(C)$ and $A' \cap \cl(B) \subseteq \cl(C)$ by invariance.
Expand $\mcA'$ to a $\mcL_c$-structure $(\mcA', \oc_\mcA')$ by defining $\oc_\mcA' := \oc_\mcA \circ \pi$, where we can verify $(\mcA', \oc_\mcA') \isom_C (\mcA, \oc_\mcA)$ and $(\mcC, \oc_\mcC) \preceq^* (\mcA', \oc_\mcA')$.

Now define $\mcD := \mcM \upharpoonright (A' \cup B)$.
Then we have $\mcD \in \mcF$ and $\mcA' \preceq_\mcL \mcD$ and $\mcB \preceq_\mcL \mcD$.
Our task is to define the appropriate coloring on $\mcD$.
Indeed, define for each $d \in D$,
\[ \oc_{\mcD}(d) = \begin{cases}
    \oc_{\mcA'}(d) & d \in A' \\
    \oc_{\mcB}(d) & d \in B.
\end{cases}
\]
To see that it is actually well-defined, observe that if $d \in A' \cap B$, then by choice of $A'$ since $A' \subseteq \cl(A')$ we have $d \in \cl(C)$.
If $d \in C$, then of course we have $\oc_{\mcA'}(d) = \oc_{\mcC}(d) = \oc_{\mcB}(d)$.
Otherwise we have $d \in \cl(C) \setminus C$ and so $d \in \cl_{\mcA'}(C) \setminus C$ and $d \in \cl_\mcB(C) \setminus C$ in which by definition of $\preceq^*$ we have $\oc_{\mcA'}(d) = 0 = \oc_{\mcB}(d)$.

We next see that $(\mcA', \oc_\mcA') \preceq^* (\mcD, \oc_\mcD)$.
Indeed, suppose $d \in D \setminus A'$ with $d \in \cl_\mcD(A')$ and thus $d \in \cl(A')$.
Since $D = A' \cup B$, we must have $d \in B$.
By our choice of $A'$ we have $\cl(A') \cap B \subseteq \cl(C)$ thus we have $d \in \cl(C)$.
In particular, we have $d \in \cl_\mcB(C) \setminus C$, and thus from $(\mcC, \oc_\mcC) \preceq^* (\mcB, \oc_\mcB)$ we can conclude $\oc_\mcB(d) = 0$ and thus by definition $\oc_\mcD(d) = 0$ as desired.
The fact that $(\mcB, \oc_\mcB) \preceq^* (\mcD, \oc_\mcD)$ is more of the same.
\end{proof}

By Proposition \ref{prop:exists_limit} there is a countable limit $(\mcN, \oc)$ of $(\mcF^+_c, \preceq^*)$.

\begin{claim}
We have $\mcM \isom \mcN$.
\end{claim}

\begin{proof}
It suffices by a back-and-forth argument that the set
\[ \mcI = \{\pi : \mcA \isom \mcB \mid \mcA \prec_\mcL \mcM \; \text{and} \; (\mcB, \oc_\mcB) \prec^* (\mcN, \oc) \; \text{where} \; \oc_\mcB = \oc \upharpoonright B\}.\]
has the back-and-forth property.

First, suppose we have $\pi : \mcA \isom \mcB \in \mcI$ and some $a' \in M$.
Define $\mcA' := \mcM \upharpoonright Aa'$, and extend $\pi$ arbitrarily to $\pi' : \mcA' \isom \mcB'$ where $B' = Bb'$ for some $b' \not\in B$.
Define $\oc_\mcB'$ on $B'$ by $\oc_\mcB'(b) = \oc(b)$ for all $b \in B$ and $\oc_{\mcB'}(b') = 0$.
Then we have $(\mcB, \oc_\mcB) \prec^* (\mcB', \oc_\mcB')$.
By Lemma \ref{lem:extension_property}, there is some $\sigma :  (\mcB'', \oc_\mcB'') \isom_B (\mcB', \oc_\mcB')$ with $(\mcB'', \oc_\mcB'') \prec^* (\mcN, \oc)$.
By definition of $\prec^*$, we must have $\oc_\mcB'' = \oc \upharpoonright B''$ and thus $\sigma^{-1} \circ \pi' : \mcA' \isom \mcB''$ is in $\mcI$.
We moreover have $\pi \subseteq \sigma^{-1} \circ \pi'$ and so the first part is done.

For the other half, suppose $\pi : \mcA \isom \mcB \in \mcI$ and $b' \in N$.
Applying Definition \ref{def:strong_limit}.(i) there is some $(\mcB', \oc_\mcB') \prec^* (\mcN, \oc)$ with $Bb' \subseteq B'$.
By definition of $\prec^*$ we have $\oc_\mcB' = \oc \upharpoonright B'$.
Extend $\pi$ arbitrarily to some $\pi' : \mcA' \isom \mcB'$ where $\mcA \prec_\mcL \mcA'$.
By the extension property for $\preceq_\mcL$ on $\mcF$, find some $\sigma : \mcA'' \isom_A \mcA'$ such that $\mcA'' \prec_\mcL \mcM$.
Then $\pi' \circ \sigma : \mcA'' \isom \mcB'$ is in $\mcI$ and extends $\pi$ as desired.
\end{proof}

Thus, without loss of generality we will assume $\mcM = \mcN$ and thus we have equipped $\mcM$ with a coloring $\oc$ such that $(\mcM, \oc)$ is a strong limit of $(\mcF^+_c, \preceq^*)$.

Let $S_\infty^*$ be the closed subgroup of $S_\infty$ of permutations of $\omega$ that fix 0.
This is easily isomorphic to $S_\infty$, thus it suffices to show that $\Aut(\mcM)$ involves $S_\infty^*$.

Let $H \le \Aut(\mcM)$ be the closed subgroup of $\pi$ satisfying that for every $a, b \in M$ we have $\oc(a) = \oc(b)$ if and only if $\oc(\pi(a)) = \oc(\pi(b))$.
Observe that $\pi \in H$ if and only if there is some $\sigma_\pi \in S^*_\infty$ such that $\oc(\pi(a)) = \sigma_\pi(\oc(a))$ for every $a \in M$.
Because $\cl$ is nontrivial, every color appears somewhere in $(\mcM, \oc)$ by Definition \ref{def:strong_limit}.(ii).
The map $\pi \mapsto \sigma_\pi$ is therefore well defined, and moreover is easily seen to be a continuous homomorphism from $H$ to $S_\infty^*$.
What remains is to show that this map is surjective.

Fix some arbitrary permutation $\sigma \in S_\infty^*$.
Let $\{d_n \mid n \in \omega\}$ be an enumeration of $M$.
We will define $(\mcA_n, \oc_{\mcA_n})$ and $(\mcB_n, \oc_{\mcB_n})$ in $\mcF^+_c$ satisfying for every $n$:
\begin{enumerate}
    \item $(\mcA_n, \oc_{\mcA_n}) \prec^* (\mcM, \oc)$ and $\mcA_n \preceq_\mcL \mcA_{n+1}$ and $(\mcB_n, \oc_{\mcB_n}) \prec^* (\mcM, \oc)$ and $\mcB_n \preceq_\mcL \mcB_{n+1}$;
    \item $d_n \in A_{2n+1}$ and $d_n \in B_{2n+2}$;
    \item $\pi_n[A_n] = B_n$;
    \item $\oc_{\mcB_n}(\pi_n(a)) = \sigma(\oc_{\mcA_n}(a))$ for every $a \in A_n$;
    \item $\pi_{n+1} \upharpoonright A_n = \pi_n$. 
\end{enumerate}

Assuming we are able to produce this, we claim that the sequence of $\pi_n$ is Cauchy.
Recall that $d(\pi, \pi') := 2^{-n-1}$ where $n$ is the least such that $\pi(d_n) \neq \pi'(d_n)$ is a compatible left-invariant metric on $\Aut(\mcM)$ and $D(\pi, \pi') := d(\pi, \pi') + d(\pi^{-1}, \pi'^{-1})$ is a compatible complete metric on $\Aut(\mcM)$, see e.g. the beginning of \cite[1.5]{BeckerKechris1996}.
For any $m$, if we choose $n, n' \ge 2m+2$ with $n < n'$ then we have $\{d_i \mid i \le m\} \subseteq A_n$ by (2) and (1) and thus $d(\pi_n, \pi_{n'}) < 2^{-m-1}$ by (5).
Moreover we have $\{d_i \mid i \le m\} \subseteq B_n$ by (2) and (1) and so $d(\pi^{-1}_n, \pi^{-1}_{n'}) <2^{-m-1}$ by (3).
Thus $D(\pi_n, \pi_{n'}) < 2^{-m}$.
Thus by completeness of $\Aut(\mcM)$ we have $\pi_n \rightarrow \pi_\infty$ for some $\pi_\infty \in \Aut(\mcM)$.

Arguing similarly, we have that $\oc(\pi_\infty(a)) = \sigma(\oc(a))$ for every $a \in M$.
In particular, $\pi_\infty \in H$ and $\sigma_{\pi_\infty} = \sigma$.

Thus all that remains is the construction.
Let $A_0 = B_0 = \emptyset$.

Suppose for some $n$ we've defined $(\mcA_i)_{i \le 2n}$ and $(\mcB_i)_{i \le 2n}$ and $(\pi_i)_{i \le 2n}$.

By Definition \ref{def:strong_limit}.(i) of strong limit, we may find some $(\mcA_{2n+1}, \oc_{\mcA_{2n+1}}) \in \mcF^+_c$ with $A_{2n}\cup\{d_n\} \subseteq A_{2n+1}$ and $(\mcA_{2n+1}, \oc_{\mcA_{2n+1}}) \prec^* (\mcM, \oc)$ and so in particular 
\begin{equation}\label{eq:A_strong}
(\mcA_{2n}, \oc_{\mcA_{2n}}) \prec^* (\mcA_{2n+1}, \oc_{\mcA_{2n+1}}).
\end{equation}
Now let $(\hat{\mcB}, \hat{\oc})$ be defined by
\[\hat{\mcB} := \mcM \upharpoonright \pi_{2n}[A_{2n+1}] \quad \text{and} \quad \hat{\oc} := \sigma \circ \oc \circ \pi_{2n}^{-1}.\]
Observe that 
\begin{equation}\label{eq:B_strong}
(\mcB_{2n}, \oc_{\mcB_{2n}}) \prec^* (\hat{\mcB}, \hat{\oc}).
\end{equation}
Indeed, checking all parts of the definition of $\prec^*$, we have $\mcB_{2n} \prec_\mcL \hat{\mcB}$, and for every $b \in B_{2n}$ we have $\hat{\oc}(b) = \sigma(\oc(\pi_{2n}^{-1}(b))) = \oc(\pi_{2n}(\pi_{2n}^{-1}(b))) = \oc(b)$ where the second equality is by (4), and finally for every $b \in \cl_{\hat{\mcB}}(B_{2n}) \setminus B_{2n}$ we have by invariance Claim \ref{claim:little_closure}.(i) that $\pi_{2n}^{-1}(b) \in \cl_{\mcA_{2n+1}}(A_{2n}) \setminus A_{2n}$ and so $\hat{\oc}(b) = \sigma(\oc(\pi_{2n}^{-1}(b))) = \sigma(0) = 0$ by Equation \ref{eq:A_strong} and that $\sigma \in S^*_\infty$.

By Lemma \ref{lem:extension_property}, there is some $(\mcB_{2n+1}, \oc_{\mcB_{2n+1}}) \in \mcF^+_c$ such that
\begin{equation}\label{eq:B_strong2}
(\mcB_{2n+1}, \oc_{\mcB_{2n+1}}) \prec^* (\mcM, \oc) \quad \text{and} \quad (\mcB_{2n+1}, \oc_{\mcB_{2n+1}}) \isom_{B_{2n}} (\hat{\mcB}, \hat{\oc}).
\end{equation}
Letting $\rho$ witness the isomorphism on the right, we apply ultrahomogeneity of strong substructures Definition \ref{def:strong_limit}.(iii) to find $\pi \in \Aut(\mcM, \oc)$ extending $\rho$.
Then define $\pi_{2n+1} := \pi^{-1} \circ \pi_{2n}$.

Let's check that we satisfied all the properties up to $2n+1$.
We can see that we've satisfied (1).
We satisfied (2) by our choice of $A_{2n+1}$.
For (3) we observe $\pi_{2n+1}[\mcA_{2n+1}] = (\pi^{-1} \circ \pi_{2n})[\mcA_{2n+1}] = \pi^{-1}[\hat{\mcB}] = \mcB_{2n+1}$.
For (4) for any $a \in A_{2n+1}$ we see that 
\begin{align*}
\oc_{\mcB_{2n+1}}(\pi_{2n+1}(a)) &= \oc_{\mcB_{2n+1}}(\pi^{-1}(\pi_{2n}(a))) = \hat{\oc}(\pi_{2n}(a)) \\
&= (\sigma \circ \oc \circ \pi_{2n}^{-1})(\pi_{2n}(a)) = \sigma(\oc(a)) = \sigma(\oc_{\mcA_{2n+1}}(a)).
\end{align*}
Finally for (5) we see that for any $a \in A_{2n}$ we have $\pi_{2n+1}(a) = \pi^{-1}(\pi_{2n}(a)) = \pi_{2n}(a)$ since $\pi_{2n}(a) \in B_{2n}$ and $\pi$ extends $\rho$.

The construction of $\mcA_{2n+2}$ and $\mcB_{2n+2}$ and $\pi_{2n+2}$ are similar.

Thus we have showed that $P = \Aut(\mcM)$ involves $S_\infty$.
This completes the proof of Theorem \ref{thm:disjointifying_closure_implies_involve}.
\end{proof}

\section{A rank function and the minimal disjointifying closure operator}\label{sec:rank}

We now identify a rank function which identifies the existence of a disjointifying closure operator. It will also allow us to define a canonical closure operator which will happen to be the minimal disjointifying closure operator if one exists.
The rank that we will define will be a natural extension of a rank function which is already in the literature.
We will start by discussing this rank function and how it characterizes the pseudo-algebraic closure, and then proceed to defining our new rank function and the analogous way in which it characterizes the minimal disjointifying closure operator.

\subsection{Deissler rank}

This subsection is a presentation of the theory developed by Deissler in \cite{Deissler1997} in somewhat different language.

Let $I$ be a set with an action $P \curvearrowright I$.
For any finite $B \subseteq I$ let $\Stab_P(B)$ be the set of $g \in P$ such that $g \cdot b = b$ for all $b \in B$.
Given a finite set $B \subseteq I$ and $a \in I$, we define an ordinal rank $\Drk(a, B)$ as follows.
We say $\Drk(a, B) \le 0$ iff for every $g \in \Stab_P(B)$, we have $g \cdot a = a$.
In general, for $\alpha > 0$, we say $\Drk(a, B) \le \alpha$ iff there exists some $c \in I$ such that for every $c' \isom_B c$, we have $\Drk(a, Bc') < \alpha$.
We then define $\Drk(a,B)$ to be the least $\alpha$ such that $\Drk(a, B) \le \alpha$, if such $\alpha$ exists, otherwise we write $\Drk(a, B) = \infty$.
If $\Drk(a, B) \le \alpha$ for some ordinal $\alpha$, then we write $\Drk(a, B) < \infty$.

The following facts are useful and easily verified by the most straightforward induction arguments.

\begin{lemma}\label{lem:deissler_rank_fact}
    The following all hold:
    \begin{enumerate}
    \item For finite subsets $B, C$ of $I$ satisfying $C \supseteq B$ and $a \in I$, we have $\Drk(a, C) \le \Drk(a, B)$;
    \item For any finite $B \subseteq I$ and $a \in I$, if there exists some $c \in I$ such that $\Drk(a, c'B) < \infty$ for every $c' \isom_B c$, then $\Drk(a, B) < \infty$;
    \item For finite $B \subseteq I$ and $a \in I$ if $g \in P$ then $\Drk(g \cdot a, g \cdot B) = \Drk(a, B)$.
    \end{enumerate}
\end{lemma}

We define a closure operator $\Dcl$ on $I$ as follows: given $B \subseteq I$ and $a \in I$, we declare $a \in \Dcl(B)$ iff $\Drk(a, B_0) < \infty$ for some finite $B_0 \subseteq B$.
The fact that it is actually a closure operator follows by the next result.

\begin{theorem}[ess. Deissler]\label{th:deissler}
Let $\mcL$ be a countable relational language and $\mcM$ a countable $\mcL$-structure with the natural action $\Aut(\mcM) \curvearrowright M$. 
Let $B \subseteq M$ be finite and $a \in M$.
Then $a \in \Dcl(B)$ iff for every $\mcM_0 \preceq_{\mcL_{\omega_1, \omega}} \mcM$ with $B \subseteq M_0$, we have $a \in M_0$.
\end{theorem}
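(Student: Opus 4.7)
The plan is to prove the two directions separately. The forward direction $(\Rightarrow)$ is a straightforward transfinite induction on $\Drk(a, B)$, while the reverse direction $(\Leftarrow)$ is a downward L\"owenheim--Skolem-style construction that maintains infinite rank throughout. The main technical point is the reverse direction, specifically the care needed to preserve $\Drk(a, B_n) = \infty$ at every stage of the construction, but Lemma \ref{lem:deissler_rank_fact} provides exactly the tool we need.

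For the forward direction, assume $a \in \Dcl(B)$, so $\alpha := \Drk(a, B) < \infty$. Fix an arbitrary $\mcM_0 \prec_{\countlogic} \mcM$ with $B \subseteq M_0$. I argue $a \in M_0$ by induction on $\alpha$. If $\alpha = 0$, then $\Stab_{\Aut(\mcM)}(B)$ fixes $a$, so the only $a' \in M$ with $a' \isom_B a$ is $a$ itself; the elementarity criterion recalled in the preliminaries, applied to the finite set $B \subseteq M_0$ and the element $a \in M$, therefore forces $a \in M_0$. For $\alpha > 0$, pick a witness $c \in M$ with $\Drk(a, Bc') < \alpha$ for every $c' \isom_B c$; elementarity hands us such a $c' \in M_0$, and the inductive hypothesis applied to the finite set $Bc' \subseteq M_0$ yields $a \in M_0$.

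For the reverse direction, assume $\Drk(a, B) = \infty$. The key maneuver is the contrapositive of Lemma \ref{lem:deissler_rank_fact}: whenever $B'$ is a finite set with $\Drk(a, B') = \infty$ and $c \in M$ is arbitrary, there exists some $c' \isom_{B'} c$ with $\Drk(a, B'c') = \infty$. Note that any such $c'$ is automatically distinct from $a$, since $\Drk(a, Ba) = 0 < \infty$. I then construct an increasing chain of finite sets $B = B_0 \subseteq B_1 \subseteq \cdots$ with $\Drk(a, B_n) = \infty$ for every $n$. Standard bookkeeping enumerates all pairs $(F, c)$ where $F$ is a finite subset of some previously constructed $B_n$ and $c \in M$; at stage $n+1$ we address the next pending task $(F, c)$ by applying the key maneuver with $B' := B_n$ to find $c' \isom_{B_n} c$ (hence a fortiori $c' \isom_F c$) with $\Drk(a, B_n c') = \infty$, and set $B_{n+1} := B_n \cup \{c'\}$.

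The resulting union $M_0 := \bigcup_n B_n$ then contains $B$, omits $a$, and by construction satisfies the Tarski--Vaught-style elementarity criterion from the preliminaries: for any finite $F \subseteq M_0$ and any $c \in M$, both $F$ and $c$ appear in the bookkeeping at some stage, and hence a witness $c' \isom_F c$ was added to $M_0$. This gives $\mcM_0 \prec_{\countlogic} \mcM$ with $B \subseteq M_0$ and $a \notin M_0$, completing the proof.
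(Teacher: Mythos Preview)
Your proof is correct and follows essentially the same strategy as the paper. The forward direction is identical; for the backward direction, both you and the paper use the contrapositive of Lemma \ref{lem:deissler_rank_fact} to build an elementary submodel avoiding $a$, with the only difference being that the paper organizes this as a two-level construction (an $\omega$-chain of sets $C_n$, each obtained from the previous by a separate $\omega$-sequence of extensions) while you flatten it into a single $\omega$-chain of finite sets with dynamic bookkeeping.
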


\begin{proof}
Recall from Lemma \ref{lem:elementary_substructure} that for any $\mcL$-substructure $\mcM_0 \preceq_{\mcL} \mcM$, we have $\mcM_0 \prec_{\countlogic} \mcM$ iff for every finite $B \subseteq M_0$ and $a \in M$, there is some $a' \in M_0$ with $a' \isom_B a$.

For the forward direction, suppose $a \in \Dcl(B)$ and $\mcM_0 \prec_{\countlogic} \mcM$ with $B \subseteq M_0$.
We argue by induction on $\Drk(a, B)$.
If $\Drk(a, B) \le 0$ and $a' \isom_B a$ satisfies $a' \in M_0$, then we must have $a' = a$ and thus $a \in M_0$.
Otherwise, suppose $\Drk(a, B) \le \alpha$ with $\alpha > 0$ and the claim is true below $\alpha$.
Fix some $c \in M$ such that $\Drk(a, Bc') < \alpha$ for every $c' \isom_B c$.
Let $c' \isom_B c$ such that $c' \in M_0$ in which case by the induction hypothesis we have $a \in M_0$.

For the backwards direction, suppose $a \not\in \Dcl(B)$.
We will construct a $\countlogic$-elementary submodel $\mcM_0$ of $\mcM$ with $a \not\in M_0$.

Given $C \subseteq C' \subseteq M$, say that $C'$ has closure property $(*)$ over $C$ iff for every finite $D \subseteq C$ and $e \in M$, there exists some $e' \isom_D e$ with $e' \in C'$.
We argue that for any $C \subseteq M$ with $a \not\in \Dcl(C)$, there is a set $C' \subseteq M$ with $C \subseteq C'$ which has closure property $(*)$ over $C$ and $a \not\in \Dcl(C')$.
Granting this, we can define a sequence 
\[\emptyset = C_0 \subseteq C_1 \subseteq C_2 \subseteq ...\]
such that $a \not\in \Dcl(C_n)$ and $C_{n+1}$ has property $(*)$ over $C_n$, and define $\mcM_0 = \mcM \upharpoonright [\bigcup_{n \in \omega} C_n]$.
Then $a \not\in M_0$ and $\mcM_0 \preceq_{\countlogic} \mcM$.

Let $C \subseteq M$.
To find $C'$, write $C$ as an increasing union $\bigcup_{n \in \omega} B_n$ of finite sets.
Fix an enumeration $\{a_n \mid n \in \omega\}$ of $M$.
Let $h : \omega \rightarrow \omega$ be a function such that the preimage of every $n \in \omega$ is infinite.
We recursively define using Lemma \ref{lem:deissler_rank_fact} a sequence $d_0, d_1, ...$ such that for every $n \in \omega$, $d_n \isom_{B_n} a_{h(n)}$ and $a \not\in \Dcl(B_nd_0...d_{n})$.
Then let $C' = C \cup \{d_n \mid n \in \omega\}$.
It is easy to check that this works.
\end{proof}

\begin{proposition}\label{prop:Dcl_property}
The operator $\Dcl$ is an invariant closure operator with finite character.
\end{proposition}

\begin{proof}
The fact that it is a closure operator is a direct corollary of Theorem \ref{th:deissler}.
Invariance follows from Lemma \ref{lem:deissler_rank_fact}.(3).
It has finite character by definition.
\end{proof}

In \cite{Gao1998}, Gao relates the non-existence of nontrivial $\mcL_{\omega_1, \omega}$-elementary substructures with dynamical properties of the automorphism group.
Recall that a Polish group $G$ is cli iff there is a complete metric $d$ on $G$, compatible with the topology on $G$, such that $d$ is left-invariant, i.e. $d(g, g') = d(hg, hg')$ for every $g, g', h \in G$.

\begin{theorem}[Gao, \cite{Gao1998}]
Let $\mcL$ be a countable relational language and $\mcM$ a countable $\mcL$-structure. 
Then the Polish group $\Aut(\mcM)$ is cli iff there is no $\mcM_0 \prec_{\countlogic} \mcM$, i.e. there is no non-trivial $\countlogic$-elementary substructure of $\mcM$.
\end{theorem}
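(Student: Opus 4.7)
The plan is to prove both directions by contrapositive, after first reducing to the case where $\mcM$ is ultrahomogeneous (justified by the preliminaries: enriching $\mcL$ with orbit relations changes neither $\Aut(\mcM)$ nor the collection of $\countlogic$-elementary substructures, the latter because that notion is defined purely in terms of automorphism orbits $\isom_B$). The key characterization I will use is the standard fact that $\Aut(\mcM)$ fails to be cli iff there is a sequence $(g_n)$ of automorphisms of $\mcM$ converging pointwise to some $g_\infty : M \to M$ that is an injective $\mcL$-embedding but not surjective. This is because in any compatible left-invariant metric on $\Aut(\mcM)$, Cauchy-ness reduces to pointwise stabilization of the forward action, whereas convergence in $\Aut(\mcM)$ further demands that the pointwise limit be a bijection of $M$.

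For the $(\Rightarrow)$ direction, suppose $\mcM_0 \prec_{\countlogic} \mcM$ with $M_0 \subsetneq M$. Then $\mcM_0 \equiv_{\countlogic} \mcM$, and since both structures are countable they satisfy the same Scott sentence; hence $\mcM_0 \isom \mcM$, so I can fix an isomorphism $\sigma : \mcM \to \mcM_0 \subsetneq \mcM$. Enumerating $M = \{c_n : n \in \omega\}$, for each $n$ the finite map $c_i \mapsto \sigma(c_i)$ for $i \le n$ is a partial isomorphism of $\mcM$, so by ultrahomogeneity it extends to some $g_n \in \Aut(\mcM)$. Then $g_n \to \sigma$ pointwise, and $\sigma$ is a non-surjective $\mcL$-embedding, so by the characterization $\Aut(\mcM)$ is not cli.

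For the $(\Leftarrow)$ direction, suppose $(g_n) \subseteq \Aut(\mcM)$ converges pointwise to a non-surjective $\mcL$-embedding $g_\infty$. Set $\mcM_0 := g_\infty[\mcM]$, which is a proper substructure of $\mcM$. I will verify that $\mcM_0 \prec_{\countlogic} \mcM$ via the back-and-forth criterion: given a finite tuple $B = \{g_\infty(c_{i_1}), \ldots, g_\infty(c_{i_k})\} \subseteq M_0$ and $a \in M$, pick $n$ large enough that $g_n(c_{i_j}) = g_\infty(c_{i_j})$ for all $j \le k$, set $a_n := g_n^{-1}(a)$, and put $a' := g_\infty(a_n) \in M_0$. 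Then the tuples $(c_{i_1}, \ldots, c_{i_k}, a_n)$ and $(g_\infty(c_{i_1}), \ldots, g_\infty(c_{i_k}), a)$ have the same quantifier-free type in $\mcM$ (witnessed by the automorphism $g_n$), and applying the $\mcL$-embedding $g_\infty$ to the former shows $(g_\infty(c_{i_1}), \ldots, g_\infty(c_{i_k}), a')$ shares that type; composing, the map $B \cup \{a'\} \to B \cup \{a\}$ fixing $B$ and sending $a' \mapsto a$ is a partial isomorphism of $\mcM$, and ultrahomogeneity lifts it to an automorphism witnessing $a' \isom_B a$.

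The main obstacle is in the $(\Leftarrow)$ direction: promoting the raw pointwise-limit $\mcL$-embedding $g_\infty$ (which the failure of cli supplies essentially for free) into an honest back-and-forth $\countlogic$-elementary substructure as required by the definition in the preliminaries. This is where ultrahomogeneity is genuinely essential, since without it the partial isomorphisms produced by composing $g_n$ and $g_\infty$ need not extend to automorphisms. The $(\Rightarrow)$ direction is conceptually cleaner but relies on Scott's theorem to pass from $\countlogic$-equivalence to isomorphism; if one wished to avoid Scott, $\sigma : \mcM \to \mcM_0$ could instead be built by a direct back-and-forth using the defining property of $\mcM_0 \prec_{\countlogic} \mcM$ on one side and ultrahomogeneity of $\mcM$ on the other.
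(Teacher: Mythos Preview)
The paper does not supply a proof of this statement; it is quoted as a result of Gao with a citation to \cite{Gao1998}, so there is nothing to compare against. Your argument is correct and is essentially the standard one. A couple of small remarks: when you reduce to the ultrahomogeneous case, make sure you are consistently working in the enriched language throughout both directions---in particular, in the $(\Leftarrow)$ direction the pointwise limit $g_\infty$ is an embedding for the enriched language (since each orbit relation is preserved by every $g_n$ and hence by $g_\infty$), which is what you need for the partial-isomorphism step to go through via ultrahomogeneity. Your justification of the key characterization (not cli $\iff$ some left-Cauchy sequence of automorphisms has non-surjective pointwise limit) is right, and relies on the general fact that a Polish group has a complete left-invariant metric iff \emph{every} compatible left-invariant metric is complete, so it suffices to check the canonical one on $\Aut(\mcM)$.
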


Putting these equivalences together, we get the following ``master" list of equivalences, where we define $\Drk(\mcM)$ to be the supremum of $\Drk(a, \emptyset) + 1$ over all $a \in M$ with respect to the natural action $\Aut(\mcM) \curvearrowright M$.

\begin{corollary}[Deissler, Gao]\label{cor:deissler}
Let $\mcL$ be a countable relational language and $\mcM$ a countable $\mcL$-structure.
The following are equivalent:
\begin{enumerate}
    \item $\Aut(\mcM)$ is cli;
    \item there is no $\mcM_0 \prec_{\countlogic} \mcM$;
    \item there is no uncountable $\mcL$-structure satisfying the Scott sentence of $\mcM$;
    \item $\Dcl(\emptyset) = M$;
    \item $\Drk(\mcM) < \infty$;
    \item $\Drk(\mcM) < \omega_1$.
\end{enumerate}
\end{corollary}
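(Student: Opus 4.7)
The plan is simply to assemble the corollary from ingredients already laid out in this subsection: almost nothing new needs to be done beyond chaining the stated equivalences.

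First, $(1) \iff (2) \iff (3)$ is precisely the Gao Fact cited earlier in this subsection, which bundles cli-ness, the absence of proper $\countlogic$-elementary submodels, and the non-existence of uncountable models of the Scott sentence. The equivalence $(1) \iff (2)$ is also exactly Gao's Theorem restated just above the corollary. No extra work is required here; the $(2) \iff (3)$ direction is the standard Scott / Lopez--Escobar argument (iterate a proper $\countlogic$-elementary embedding transfinitely to get an uncountable direct limit satisfying the Scott sentence for one direction, and apply downward L\"owenheim--Skolem for $\countlogic$ together with Scott's isomorphism theorem for countable models of the Scott sentence for the other).

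Next, $(2) \iff (4)$ is immediate from Theorem \ref{th:deissler} specialized to $B = \emptyset$: an element $a \in M$ belongs to $\Dcl(\emptyset)$ iff $a$ lies in every $\countlogic$-elementary substructure of $\mcM$. Consequently $\Dcl(\emptyset) = M$ iff every such substructure is all of $M$, which is (2).

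Finally, $(4) \iff (5)$ reduces to checking that for countable $\mcM$, the assertion $\Drk(a,\emptyset) < \infty$ and the assertion $\Drk(a,\emptyset) < \omega_1$ coincide, since by definition $a \in \Dcl(\emptyset)$ iff $\Drk(a,\emptyset) < \infty$. This is a routine well-foundedness observation: at each successor step in the recursion one picks a single $c \in M$ and takes the supremum of $\Drk(a, Bc')$ over $c' \isom_B c$, which is a countable collection; countable suprema of countable ordinals remain countable, so an induction shows that any defined rank lies below $\omega_1$. The only place where something external is imported is $(2) \iff (3)$, and even that is packaged as a previously cited fact, so I do not expect any real obstacle in executing this argument.
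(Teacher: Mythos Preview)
Your proposal is correct and matches the paper's approach exactly: the corollary is presented in the paper with the remark ``Putting these equivalences together,'' and you have done precisely that, chaining Gao's Fact/Theorem for $(1)\iff(2)\iff(3)$, Theorem~\ref{th:deissler} with $B=\emptyset$ for $(2)\iff(4)$, and the standard countability-of-ranks observation for $(4)\iff(5)$. If anything, you have supplied more detail than the paper itself, particularly in justifying why defined Deissler ranks on a countable structure are bounded by $\omega_1$.
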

The equivalence of (5) and (6) is easily seen by induction on $\alpha$ that for any $a \in M$ and $B \subseteq M$ if $\Drk(a, B) \le \alpha$ then $\Drk(a, B) < \omega_1$.

\subsection{Disjointifying rank}
With the Deissler rank as motivation, we move on to the disjointifying rank.
Let $I$ be a set with a group action $P \curvearrowright I$.
Given a finite set $B \subseteq I$ and $a \in I$, we define an ordinal rank $\Krk(a, B)$ as follows.
We say $\Krk(a, B) \le 0$ iff for every $g \in \Stab_P(B)$, we have $g \cdot a = a$.
In general for $\alpha > 0$, we say $\Krk(a, B) \le \alpha$ iff at least one of the following holds:
\begin{enumerate}
\item[(a)] there exists some $c$ such that for every $c' \isom_B c$, we have $\Krk(a, Bc') < \alpha$;
or
\item[(b)] for every $a' \isom_B a$, either $\Krk(a, Ba') < \alpha$ or $\Krk(a', Ba) < \alpha$.
\end{enumerate}

We then define $\Krk(a, B)$ to be the least $\alpha$ such that $\Krk(a, B) \le \alpha$, if such $\alpha$ exists, otherwise we write $\Krk(a, B) = \infty$.
If $\Krk(a, B) \le \alpha$ for some ordinal $\alpha$, then we write $\Krk(a, B) < \infty$.

Note that if we were to remove condition (b) in the recursive case of the definition of disjointifying rank, we will just get the Deissler rank.

We list some basic properties of the rank which are easily confirmed by the most straightforward induction argument.
\begin{lemma}\label{lem:rank_basic_facts} The following all hold:
\begin{enumerate}
    \item[(1)]\label{item:knight_monotone} For finite subsets $B, C$ of $I$ satisfying $C \supseteq B$, and $a \in I$, we have $\Krk(a, C) \le \Krk(a, B)$;
    \item[(2a)]\label{item:knight2a} For finite $B \subseteq I$ and $a \in I$, if there exists some $c \in I$ such that $\Krk(a, c'B) < \infty$ for every $c' \isom_B c$, then $\Krk(a, B) < \infty$;
    \item[(2b)]\label{item:knight2b} For finite $B \subseteq I$ and $a \in I$, if either $\Krk(a, a'B) < \infty$ or $\Krk(a', aB) < \infty$ for every $a' \isom_B a$, then $\Krk(a, B) < \infty$.
    \item[(3)]\label{item:knight_invariant} For finite $B \subseteq I$ and $a \in I$ if $g \in P$ then $\Krk(g \cdot a, g \cdot B) = \Krk(a, B)$.
\end{enumerate}
\end{lemma}

The following properties are also useful but demand proofs.

\begin{lemma}\label{lem:clmin_closure}
The following hold for all $x, y \in I$ and finite $X, Y, Z \subseteq I$:
\begin{enumerate}
\item If $\Krk(x, Z) \le 0$ and $\Krk(y, xZ) < \infty$ then $\Krk(y, Z) < \infty$;
\item If $\Krk(x, Z) < \infty$ and $\Krk(y, xZ) < \infty$ then $\Krk(y, Z) < \infty$; 
\item If $\Krk(x, Z) < \infty$ for all $x \in X$ and $\Krk(y, XZ) < \infty$ then $\Krk(y, Z) < \infty$
\end{enumerate}
\end{lemma}

\begin{proof}
We start with Statement (1), inducting on $\alpha := \Krk(y, xZ)$.
We start with the base case $\alpha = 0$.
Letting $g \in \Stab_P(Z)$ by $\Krk(x, Z) \le 0$ we have $g \cdot x = x$ and so $g \in \Stab_P(xZ)$ and so by $\Krk(y, xZ) \le 0$ we have $g \cdot y = y$. 
Thus $\Krk(y, Z) \le 0$.

Otherwise assume $\alpha > 0$ and the claim holds below $\alpha$.
In the inductive definition of $\Krk(y, xZ)$ there are two cases, which we handle separately.

First, suppose there is some $w$ such that for every $w' \isom_{xZ} w$ we have $\Krk(y, w'xZ) < \alpha$.
To show $\Krk(y, Z) < \infty$ by Lemma \ref{lem:rank_basic_facts}.(2a) it suffices to show $\Krk(y, w'Z) < \infty$ for every $w' \isom_Z w$.
Indeed, fix arbitrary $w' \isom_{Z} w$ as witnessed by $g \in P$.
Since $\Krk(x, Z) \le 0$ we must have $g \cdot x = x$.
Thus $g : w' \isom_{xZ} w$ and so $\Krk(y, w'xZ) < \alpha$.
By monotonicity Lemma \ref{lem:rank_basic_facts}.(1) we have $\Krk(x, w'Z) \le 0$, and so by the induction hypothesis we have $\Krk(y, w'Z) < \infty$ as desired.

Second, suppose for every $y' \isom_{xZ} y$ either $\Krk(y', yxZ) < \alpha$ or $\Krk(y, y'xZ) < \alpha$.
To show $\Krk(y, Z) < \infty$ it suffices to show by Lemma \ref{lem:rank_basic_facts}.(2b) that for every $y' \isom_Z y$ either $\Krk(y', yZ) < \infty$ or $\Krk(y, y'Z) < \infty$.
Indeed, fix arbitrary $y' \isom_Z y$ as witnessed by $g$.
Since $\Krk(x, Z) \le 0$ we in fact have $g : y' \isom_{xZ} y$.
If $\Krk(y', yxZ) < \alpha$ then by the induction hypothesis we get $\Krk(y', yZ) < \infty$.
If $\Krk(y, y'xZ) < \alpha$ then by the induction hypothesis we get $\Krk(y, y'Z) < \infty$.

Now we move on to Statement (2).
We induct on $\alpha := \Krk(x, Z)$.
If $\alpha = 0$ then this is just Statement (1), thus we assume $\alpha > 0$ and the claim holds below $\alpha$.
In the inductive definition of $\Krk(x, Z)$ there are two cases, which we handle separately.

First, suppose there is some $w$ such that for every $w' \isom_Z w$ we have $\Krk(x, w'Z) < \alpha$.
To show $\Krk(y, Z) < \infty$ it suffices to show by Lemma \ref{lem:rank_basic_facts}.(2a) that for arbitrary $w' \isom_Z w$ we have $\Krk(y, w'Z) < \infty$.
We have $\Krk(x, w'Z) < \alpha$.
Since $\Krk(y, xZ) < \infty$ we have by Lemma \ref{lem:rank_basic_facts}.(1) that $\Krk(y, xw'Z) < \infty$ and so by the induction hypothesis we have $\Krk(y, w'Z) < \infty$.

Second, suppose for every $x' \isom_Z x$ either $\Krk(x', xZ) < \alpha$ or $\Krk(x, x'Z) < \alpha$.
For any $y' \isom_Z y$ and $y'' \isom_Z y$ write $y' \le y''$ iff for every $x' \isom_Z x$ if $\Krk(y', x'Z) < \infty$ then $\Krk(y'', x'Z) < \infty$.
We claim this is a prelinear order.
It is clearly reflexive and transitive, so we move on to trichotomy.
Indeed, suppose for contradiction that we have such $y', y''$ such that $y' \not\le y''$ and $y'' \not\le y'$.
Then we have $x' \isom_Z x$ and $x'' \isom_Z x$ such that
\begin{equation}\label{eq:trich_1}
\Krk(y', x'Z) < \infty \quad \text{and} \quad \Krk(y'', x'Z) = \infty
\end{equation}
\begin{equation}\label{eq:trich_2}
\Krk(y', x''Z) = \infty \quad \text{and} \quad \Krk(y'', x''Z) < \infty.
\end{equation}

By invariance Lemma \ref{lem:rank_basic_facts}.(3), we have either $\Krk(x', x''Z) < \alpha$ or $\Krk(x'', x'Z) < \alpha$.
If $\Krk(x', x''Z) < \alpha$ then because the left side of (\ref{eq:trich_1}) implies $\Krk(y', x'x''Z) < \infty$ by monotonicity Lemma \ref{lem:rank_basic_facts}.(1), we can conclude by the induction hypothesis that $\Krk(y', x''Z) < \infty$, but this contradicts the left side of (\ref{eq:trich_2}).
On the other hand, if $\Krk(x'', x'Z) < \alpha$ then because the right side of (\ref{eq:trich_2}) implies $\Krk(y'', x''x'Z) < \infty$ by monotonicity, we can conclude by the induction hypothesis that $\Krk(y'', x'Z) < \infty$, but this contradicts the right side of (\ref{eq:trich_1}).
In either case we get a contradiction, thus $\le$ is a prelinear order.

Now we argue that if $y' \isom_Z y$ and $y'' \isom_Z y$ satisfy $y' \le y''$ then $\Krk(y'', y'Z) < \infty$.
This would imply that $\Krk(y, Z) < \infty$ as desired by Lemma \ref{lem:rank_basic_facts}.(2b).
Indeed, suppose $y' \le y''$ and let $g \in P$ witness $y' \isom_Z y$.
Define $x' := g^{-1} \cdot x$ in which case we get $x' \isom_Z x$.
Thus $\Krk(y', x'Z) = \Krk(y, xZ) < \infty$ by invariance Lemma \ref{lem:rank_basic_facts}.(3).
By Lemma \ref{lem:rank_basic_facts}.(2a) to show $\Krk(y'', y'Z) < \infty$ it suffices to show that for an arbitrary $x'' \isom_{y'Z} x'$, we have $\Krk(y'', x''y'Z) < \infty$.
By monotonicity Lemma \ref{lem:rank_basic_facts}.(1) it suffices to show $\Krk(y'', x''Z) < \infty$.
By definition of $\le$ it suffices to show $\Krk(y', x''Z) < \infty$.
By invariance, since $x'' \isom_{y'Z} x'$ we have $\Krk(y', x''Z) = \Krk(y', x'Z)$ which is less than $\infty$.

We proceed to Statement (3).
We induct on $|X|$.
If $|X| = 0$ then the claim follows immediately so assume $|X| > 0$ and the claim is true below $|X|$.
Fix any $x_0 \in X$ and let $X_0 = X \setminus \{x_0\}$.
Then by monotonicity Lemma \ref{lem:rank_basic_facts}.(1) we have $\Krk(x, Zx_0) < \infty$ for all $x \in X_0$ and $\Krk(y, X_0 Z x_0) < \infty$.
Thus by the induction hypothesis we have $\Krk(y, Z x_0) < \infty$.
We have $\Krk(x_0, Z) < \infty$ and $\Krk(y, Zx_0) < \infty$ so by Statement (2) we have $\Krk(y, Z) < \infty$ as desired.
\end{proof}

We define a closure operator $\clmin$ on $I$ by saying for $a \in I$ and $B \subseteq I$ that $a \in \clmin(B)$ iff for some finite $B_0 \subseteq B$ we have $\Krk(a, B_0) < \infty$.
We need to actually verify that it is a closure operator, which we do in the next lemma.

\begin{lemma}\label{lem:clmin_closure2}
$\clmin$ is a disjointifying closure operator.
\end{lemma}

\begin{proof}
For an arbitrary $A \subseteq I$ we need to show that 
\begin{enumerate}
\item $A \subseteq \clmin(A)$;
\item $\clmin(A) \subseteq \clmin(B)$ whenever $A \subseteq B$; and
\item $\clmin(\clmin(A)) = \clmin(A)$.
\end{enumerate}

For (1), we simply observe that for any $a \in A$ we have $\Krk(a, a) \le 0$ and thus $a \in \clmin(A)$.
For (2), we simply observe that if $c \in \clmin(A)$ then there is some finite $A_0 \subseteq A$ such that $\Krk(c, A_0) < \infty$ and since $A_0 \subseteq B$ we conclude $c \in \clmin(B)$.
For (3), let $c \in \clmin(\clmin(A))$ be arbitrary.
Let $D_0 \subseteq \clmin(A)$ be finite such that $\Krk(c, D_0) < \infty$.
For each $d \in D_0$ there is some finite $A_d \subseteq A$ such that $\Krk(d, A_d) < \infty$.
Letting $A_0 := \bigcup_{d \in D_0} A_d$ we have by monotonicity that $\Krk(d, A_0) < \infty$ for every $d \in D_0$.
We also get by monotonicity Lemma \ref{lem:rank_basic_facts}.(1) that $\Krk(c, D_0A_0) < \infty$.
By Lemma \ref{lem:clmin_closure}.(3) we conclude $\Krk(c, A_0) < \infty$ and thus $c \in \clmin(A)$.

The fact that $\clmin$ is invariant follows by Lemma \ref{lem:rank_basic_facts}.(3) and finite-character follows by definition.

To show that it is disjointifying, by Proposition \ref{prop:cl_equivalence}.(4) it suffices to show that for arbitrary finite $Z \subseteq I$ and $x, y \in I$ with $x \not\in \clmin(Z)$ that 
\begin{enumerate}
\item[(a)] there is some $x' \isom_Z x$ with $x'Z \forkindep[Z] xZ$; and
\item[(b)] there is some $x' \isom_Z x$ with $x' \not\in \clmin(yZ)$
\end{enumerate}
where $\forkindep$ is the independence relation derived from $\clmin$.

We first check (a).
Suppose for contradiction that for every $x' \isom_Z x$ we have $x'Z \not\forkindep[Z] xZ$, or equivalently, either $\clmin(x'Z) \cap xZ \not\subseteq \clmin(Z)$ or $\clmin(xZ) \cap x'Z \not\subseteq \clmin(Z)$.
The first holds if and only if $x \in \clmin(x'Z)$ and the second if and only if $x' \in \clmin(xZ)$.
Thus we have shown for every $x' \isom_Z x$ either $\Krk(x, x'Z) < \infty$ or $\Krk(x', xZ) < \infty$ and thus by Lemma \ref{lem:rank_basic_facts}.(2b) we have $\Krk(x, Z) < \infty$ and so $x \in \clmin(Z)$, a contradiction.

Next we check (b).
Suppose for contradiction that for every $x' \isom_Z x$ we have $x' \in \clmin(yZ)$.
Thus by Lemma \ref{lem:one_side} we have for every $y' \isom_Z y$ that $x \in \clmin(y'Z)$ and thus $\Krk(x, y'Z) < \infty$ and so by Lemma \ref{lem:rank_basic_facts}.(2a) we have $\Krk(x, Z) < \infty$, a contradiction.
Thus we have shown that $\clmin$ is disjointifying.

\end{proof}

\begin{proposition}
$\clmin$ is the minimum invariant disjointifying closure operator with finite character.
\end{proposition}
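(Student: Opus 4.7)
The plan is to prove two things: that $\clmin$ is itself an invariant disjointifying closure operator, and that $\clmin(B) \subseteq \cl(B)$ for every disjointifying $\cl$ and every finite $B$. Invariance of $\clmin$ is routine, as a single induction on $\alpha$ yields $\Krk(\pi(a),\pi[B]) \le \alpha$ iff $\Krk(a,B) \le \alpha$ for every $\pi \in P$, since the defining clauses are phrased entirely in terms of $\isom_B$. To verify the disjointifying property I would invoke the characterization in Proposition \ref{prop:cl_equivalence}(4). Fix finite $C$ and $a,b \in I$ with $\Krk(a,C) = \infty$. If condition (4)(a) were to fail, every $a' \isom_C a$ would satisfy $a \in \clmin(a'C)$ or $a' \in \clmin(aC)$, i.e.\ $\Krk(a,a'C) < \infty$ or $\Krk(a',aC) < \infty$, and Lemma \ref{lem:rank_basic_facts}(3) would force $\Krk(a,C) < \infty$. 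If (4)(b) were to fail, invariance upgrades ``every $a' \isom_C a$ lies in $\clmin(bC)$'' to ``$\Krk(a,b'C) < \infty$ for every $b' \isom_C b$'', and Lemma \ref{lem:rank_basic_facts}(2) gives the same contradiction.

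For minimality, let $\cl$ be an arbitrary disjointifying closure operator (automatically invariant by the standing convention). I would prove by transfinite induction on $\alpha$ that $\Krk(a,B) \le \alpha$ implies $a \in \cl(B)$. The base case $\alpha = 0$ is where the disjointifying hypothesis on $\cl$ first bites: $\Krk(a,B) \le 0$ means $\Stab_P(B)\cdot a = \{a\}$, so if $a \notin \cl(B)$ the element supplied by Proposition \ref{prop:cl_equivalence}(4)(a) is forced to be $a$ itself, which yields $a \forkindep[B] a$ and hence $a \in \{a\} \cap \cl(aB) \subseteq \cl(B)$, a contradiction.

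In the successor step I split according to which clause of the definition of $\Krk(a,B) \le \alpha$ is witnessed. In the first subcase some $c$ satisfies $\Krk(a,Bc') < \alpha$ for every $c' \isom_B c$; by induction and invariance of $\cl$ we have $a'' \in \cl(Bc')$ for every $a'' \isom_B a$ and every $c' \isom_B c$, so if $a \notin \cl(B)$ then Proposition \ref{prop:cl_equivalence}(4)(b), applied with $b := c$, produces an $a''$ contradicting this. In the second subcase every $a' \isom_B a$ satisfies $\Krk(a,Ba') < \alpha$ or $\Krk(a',Ba) < \alpha$, giving $a \in \cl(Ba')$ or $a' \in \cl(Ba)$ by induction; if $a \notin \cl(B)$, Proposition \ref{prop:cl_equivalence}(4)(a) yields $a' \isom_B a$ with $a' \forkindep[B] a$, forcing $a \notin \cl(a'B)$ and $a' \notin \cl(aB)$ and contradicting the dichotomy.

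The guiding observation is that the two recursive clauses defining $\Krk(a,B) \le \alpha$ were tailored to match subconditions (a) and (b) of Proposition \ref{prop:cl_equivalence}(4); once this correspondence is recognized, Lemma \ref{lem:rank_basic_facts} makes the disjointifying verification essentially a translation and the same correspondence in reverse organizes the minimality induction. I expect the only mildly subtle step to be the base case of that induction, where one must notice that $\Krk(a,B) \le 0$ collapses the relevant orbit to the singleton $\{a\}$ and thereby forces the witness provided by (4)(a) to be $a$ itself.
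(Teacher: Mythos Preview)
Your proposal is correct and follows essentially the same approach as the paper: verify the disjointifying property via condition (4) of Proposition~\ref{prop:cl_equivalence} using Lemma~\ref{lem:rank_basic_facts}(2),(3), and prove minimality by transfinite induction on $\Krk(a,B)$, handling the two recursive clauses with conditions (4)(b) and (4)(a) respectively. Your treatment of the base case (reading $a \forkindep[B] a$ as forcing $a \in \cl(B)$) and your explicit invocation of invariance in the first inductive subcase are minor rephrasings of the paper's argument, not substantive differences.
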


\begin{proof}
Let $\cl$ be another disjointifying closure operator and let $\forkindep$ be the independence relation derived from $\cl$.
Fixing arbitrary finite $Z \subseteq I$ it suffices to show that $\clmin(Z) \subseteq \cl(Z)$.
Indeed, fix arbitrary $x \in \clmin(Z)$ which means $\Krk(x, Z) < \infty$.
We induct on $\alpha := \Krk(x, Z)$.
For the base case, suppose $\alpha = 0$.
Since $\cl$ is disjointifying by Proposition \ref{prop:cl_equivalence}.(3) we can find some $x' \isom_Z x$ such that $x'Z \forkindep[Z] xZ$.
But since $\Krk(x, Z) \le 0$ we must have $x' = x$.
However $xZ \forkindep[Z] xZ$ can only hold if $x \in \cl(Z)$.

Now let $\alpha > 0$ and assume the claim holds below $\alpha$.
By the inductive definition of $\Krk(x, Z) \le \alpha$ either (a) there is some $y$ such that for every $y' \isom_Z y$ we have $\Krk(x, y'Z) < \alpha$, or (b) for every $x' \isom_Z x$ either $\Krk(x', xZ) < \alpha$ or $\Krk(x, x'Z) < \alpha$.

First suppose we have $y$ as in case (a).
Because $\cl$ is disjointifying find $y' \isom_Z y$ such that $xZ \forkindep[Z] y'Z$, and so in particular $xZ \cap \cl(y'Z) \subseteq \cl(Z)$.
By the induction hypothesis since $\Krk(x, y'Z) < \alpha$ we have $x \in \cl(y'Z)$, and thus $x \in \cl(Z)$.

Next, we suppose we have case (b).
Because $\cl$ is disjointifying find $x' \isom_Z x$ such that $xZ \forkindep[Z] x'Z$, thus \[\cl(xZ) \cap x'Z \subseteq \cl(Z) \quad \text{and} \quad \cl(x'Z) \cap xZ \subseteq \cl(Z).\]
If $\Krk(x', xZ) < \alpha$ then by the induction hypothesis we have $x' \in \cl(xZ)$ and so by the left inclusion we have $x' \in \cl(Z)$ and thus by invariance of $\cl$ we get $x \in \cl(Z)$.
On the other hand if $\Krk(x, x'Z) < \alpha$ then by the induction hypothesis we have $x \in \cl(x'Z)$ and so by the right inclusion we get $x \in \cl(Z)$.
\end{proof}

Given a countable relational language $\mcL$ and $\mcL$-structure $\mcM$ we can define $\Krk(\mcM)$ as before to be the supremum of $\Krk(a, \emptyset) + 1$ as $a$ ranges over $M$, when we consider the natural action $\Aut(\mcM) \curvearrowright M$.

Then mirroring Corollary \ref{cor:deissler} for Deissler rank, we get the following equivalences, though one equivalence we will have to postpone until the next section.

\begin{corollary}\label{cor:equivalences}
Let $\mcL$ be a countable relational language and $\mcM$ a countable $\mcL$-structure. The following are equivalent:
\begin{enumerate}
\item $\Aut(\mcM)$ does not involve $S_\infty$;
\item $\cl(\emptyset) = M$ for every invariant disjointifying closure operator with finite character;
\item $\clmin(\emptyset) = M$;
\item $\Krk(\mcM) < \infty$;
\item $\Krk(\mcM) < \omega_1$.
\end{enumerate}
\end{corollary}

\begin{proof}
By Theorem \ref{thm:disjointifying_closure_implies_involve} we have (1) implies (2), while (2) implies (3) is by Lemma \ref{lem:clmin_closure2}.
The definition of $\clmin$ gives (3) implies (4).
The fact that (4) implies (5) follows by an easy induction on $\alpha$ that if $\Krk(a, B) \le \alpha$ then $\Krk(a, B) < \omega_1$.
Finally (5) implies (1) will follow by the next section.
\end{proof}

There is likely a direct way to prove that (5) implies (1) without going through the results in the following section.

\section{Indiscernible support functions}

We define another technical notion which we will see is equivalent to an automorphism group involving $S_\infty$, with two goals in mind: to provide a motivating example of where a nontrivial disjointifying closure operator arises, and as a technical tool which we will make use of later.

Let $I$ be a set with an action $P \curvearrowright I$.
We write $[I]^{<\omega}$ to represent the set of finite subsets of $I$.
For $u, v, v'$ in $[\omega]^{<\omega}$, which represents the family of finite subsets of $\omega$, we write $v \isom_u v'$ iff there is some permutation $\sigma$ of $\omega$ such that $\sigma[v] = v'$ and $\sigma(n) = n$ for every $n \in u$.
Equivalently, $v \isom_u v'$ iff $|v| = |v'|$ and $v \cap u = v' \cap u$. Usually, we have $u \subseteq v$ and $u \subseteq v'$.

A function $\supp : [I]^{<\omega} \rightarrow [\omega]^{<\omega}$ is a \textbf{support function} iff
\begin{enumerate}
    \item[(1)] for every finite $A, B \subseteq I$ with $A \subseteq B$, we have $\supp(A) \subseteq \supp(B)$.
\end{enumerate}
We say $\supp$ is \textbf{nontrivial} iff furthermore
\begin{enumerate}
    \item[(2)] $\supp(A) \subsetneq \supp(B)$ for some finite $A \subsetneq B \subseteq I$;
\end{enumerate}
and finally we say $\supp$ is \textbf{indiscernible} iff
\begin{enumerate}
    \item[(3)] for every finite $A, B \subseteq I$ with $A \subseteq B$, and for every finite $u, v \subseteq \omega$ with $\supp(A) = u$ and $\supp(B) = v$, and for every $v' \isom_u v$, there exists some $B' \isom_A B$ with $\supp(B') = v'$.
\end{enumerate}

Note that $\supp$ being indiscernible implies that for any finite $A, B \subseteq I$ with $A \subseteq B$ there is some $B' \isom_A B$ such that $\supp(B') \cap \supp(B) = \supp(A)$.

Note that we do not make any demands that $\supp$ is invariant.
One could view this as meaning that a support function captures local information.
From the existence of such a function $\supp$, we will derive the existence of a nontrivial disjointifying closure operator.
An invariant closure operator, on the other hand, is a global object, as it describes relationships between sets which is invariant under automorphisms.

We assume that $\supp$ is such a function, and our objective is to show that $\clmin$ is nontrivial.

\begin{lemma}\label{lem:clmin_implies_supp}
Suppose $a \in \clmin(B)$. Then $\supp(aB) = \supp(B)$.
\end{lemma}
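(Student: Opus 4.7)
The plan is to induct on the ordinal $\Krk(a, B)$, which exists by hypothesis. Write $u = \supp(B)$ and $v = \supp(aB)$. Axiom (1) of support functions already gives $u \subseteq v$, so the task reduces to ruling out $v \setminus u \neq \emptyset$. The central mechanism throughout is axiom (3): it lets us realize any prescribed $v' \supseteq u$ of the correct cardinality as $\supp(B')$ for some $B' \isom_B aB$, and since $\omega$ is infinite, we can always choose such $v'$ to avoid any prescribed finite set.

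For the base case $\Krk(a, B) \le 0$, every $\pi \in \Stab_P(B)$ fixes $a$, so any $B' \isom_B aB$ equals $aB$ as a set; indiscernibility thus forces every admissible $v'$ to coincide with $v$, which is absurd if $v \setminus u \neq \emptyset$ (take $v' \subseteq u \cup (\omega \setminus v)$). For the inductive step at rank $\alpha > 0$, suppose first we are in clause (1) of the definition of $\Krk$: there is $c$ with $\Krk(a, Bc') < \alpha$ for every $c' \isom_B c$. By IH, $\supp(aBc') = \supp(Bc')$ for all such $c'$. Indiscernibility applied to $B \subseteq Bc$ shows that as $c'$ varies over $\isom_B c$-conjugates, $\supp(Bc')$ ranges over every extension of $u$ of size $|\supp(Bc) \setminus u|$. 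Since $v \subseteq \supp(aBc') = \supp(Bc')$, this set must always contain $v$; choosing one disjoint from $v \setminus u$ forces $v = u$.

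Suppose instead we are in clause (2): for every $a' \isom_B a$, either $\Krk(a, Ba') < \alpha$ or $\Krk(a', Ba) < \alpha$. Assuming $v \neq u$, apply indiscernibility to $B \subseteq aB$ to find $a' \isom_B a$ with $\supp(a'B) = v'$ for a preassigned $v' \supseteq u$ with $|v' \setminus u| = |v \setminus u|$ and $v' \cap (v \setminus u) = \emptyset$. If $\Krk(a, Ba') < \alpha$, the IH gives $\supp(aBa') = \supp(Ba') = v'$, but $v \subseteq \supp(aBa')$ forces $v \subseteq v'$, contradicting disjointness; if instead $\Krk(a', Ba) < \alpha$, the IH applied to the pair $(a', Ba)$ gives $\supp(a'Ba) = \supp(Ba) = v$, but $v' \subseteq \supp(a'Ba)$ forces $v' \subseteq v$, again contradicting disjointness. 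The main obstacle will be precisely this clause (2) step, where $a$ and $a'$ play asymmetric roles in the definition of $\Krk$: one must dispatch both sub-cases and design a single choice of $v'$ whose disjointness contradicts each of the two possible inclusions.
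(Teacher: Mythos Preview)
Your proposal is correct and follows essentially the same route as the paper's proof: induct on $\Krk(a,B)$, and at each step use indiscernibility to produce a conjugate $a'$ (or $c'$) whose support over $u=\supp(B)$ is chosen disjoint from $v\setminus u$ where $v=\supp(aB)$, then apply the induction hypothesis to force a containment that collapses $v$ down to $u$. Your handling of the clause~(2) step, including the choice of a single $v'$ with $|v'\setminus u|=|v\setminus u|$ and $v'\cap(v\setminus u)=\emptyset$ that works for both sub-cases, is exactly what the paper does (the paper phrases it as choosing $a'$ with $\supp(a'B)\cap\supp(aB)=\supp(B)$ and $|\supp(a'B)|=|\supp(aB)|$, which is the same condition).
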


\begin{proof}
We prove by induction on $\alpha$ that if $\Krk(a, B) \le \alpha$ then $\supp(aB) = \supp(B)$.

Consider the case where $\alpha = 0$.
By indiscernibility of $\supp$, there is some $a' \isom_B a$ such that $\supp(a'B) \cap \supp(aB) = \supp(B)$.
By the definition of $\Krk(a,B) \le 0$, we know that $a' = a$, which means $\supp(a'B) = \supp(aB) = \supp(B)$ as desired.

Otherwise let $\alpha > 0$ and suppose the claim is true below $\alpha$.

In the first case of the definition of $\Krk(a, B) \le \alpha$, there is some $c \in I$ such that $\Krk(a, c'B) < \alpha$ for every $c' \isom_B c$.
By indiscernibility of $\supp$ fix some $c' \isom_B c$ such that $\supp(c'B) \cap \supp(aB) = \supp(B)$. 
By the induction hypothesis we have $\supp(ac'B) = \supp(c'B)$ and thus $\supp(aB) \subseteq \supp(c'B)$.
We conclude $\supp(aB) = \supp(B)$.

The second case of the definition of $\Krk(a, B) \le \alpha$ is handled a similar way.
Suppose for every $a' \isom_B a$, either $\Krk(a, a'B) < \alpha$ or $\Krk(a', aB) < \alpha$.
By indiscernibility of $\supp$, fix some $a' \isom_B a$ such that $\supp(a'B) \cap \supp(aB) = \supp(B)$.
In the case that $\Krk(a, a'B) < \alpha$ we have by the induction hypothesis that $\supp(aB) \subseteq \supp(aa'B) = \supp(a'B)$ and thus we can conclude $\supp(aB) = \supp(B)$.
On the other hand, if $\Krk(a', aB) < \alpha$ we have $\supp(a'B) \subseteq \supp(a'aB) = \supp(aB)$ and thus $\supp(aB) = \supp(B)$.
\end{proof}

Now we can connect indiscernible support functions with disjointifying closure operators.

\begin{proposition}\label{prop:supp_implies_cl}
If $P \curvearrowright I$ has a nontrivial indiscernible support function, then it has a nontrivial disjointifying closure operator.
\end{proposition}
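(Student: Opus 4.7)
The plan is to show that $\clmin$, the canonical closure operator introduced in Section \ref{sec:rank}, is nontrivial whenever $\supp$ is nontrivial. Since $\clmin$ has already been established to be an invariant disjointifying closure operator on $I$ (as the minimum such operator, via the propositions in Section \ref{sec:rank}), the only remaining task is to verify that $\clmin(\emptyset) \neq I$, and then $\clmin$ itself witnesses the conclusion of the proposition.

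First I would invoke the remark stated just before the proposition: nontriviality of $\supp$ produces a finite set $B \subseteq I$ and an element $a \in I$ with $\supp(B) = \emptyset$ and $\supp(aB) \neq \emptyset$. Explicitly, take a finite $C$ of minimum cardinality among those with $\supp(C) \neq \emptyset$, pick any $a \in C$, and set $B := C \setminus \{a\}$; minimality of $|C|$ forces $\supp(B) = \emptyset$, while $\supp(aB) = \supp(C) \neq \emptyset$.

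Next I would apply the contrapositive of the immediately preceding lemma, which asserts that $a \in \clmin(B)$ implies $\supp(aB) = \supp(B)$. Since in our chosen configuration $\supp(aB) \neq \supp(B)$, we conclude $a \notin \clmin(B)$, so in particular $\clmin(B) \neq I$. By monotonicity of $\clmin$ as a closure operator, $\clmin(\emptyset) \subseteq \clmin(B) \neq I$, hence $\clmin(\emptyset) \neq I$, which is the definition of nontriviality of $\clmin$. Together with the fact that $\clmin$ is an invariant disjointifying closure operator, this completes the proof.

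There is no serious obstacle here: the essential conceptual content is contained in the preceding lemma, and this proposition is largely a clean packaging of its immediate consequence. The indiscernibility hypothesis on $\supp$ has already been fully consumed in proving the lemma, so no further appeal to it is needed at this stage.
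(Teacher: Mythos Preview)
Your proposal is correct and follows essentially the same approach as the paper: both use the preceding lemma together with the remark producing $a, B$ with $\supp(aB) \neq \supp(B)$ to conclude $a \notin \clmin(B)$, and hence that $\clmin$ is nontrivial. The paper's proof is just the phrase ``And we are done'' after the remark; you have simply spelled out the details that the paper leaves implicit.
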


\begin{proof}
Suppose $\supp$ is a nontrivial indiscernible support function.
It suffices to show that $\clmin$, which we showed is a disjointifying closure operator in Lemma \ref{lem:clmin_closure2}, is nontrivial.

Since $\supp$ is nontrivial, we can find some finite $A \subsetneq B$ such that $\supp(A) \subsetneq \supp(B)$.
In particular we can choose $A$ and $B$ such that $B = Ab'$ for some $b'$.
In particular, by Lemma \ref{lem:clmin_implies_supp} we have that $b' \not\in \clmin(A)$ and thus $\clmin$ is nontrivial.
\end{proof}

And we are done.

\subsection{Deriving an indiscernible support function from a Baire-measurable homomorphism}

Our final goal is to show that if $P \le S_I$ classifies $=^+$, then there is a nontrivial indiscernible support function on $P \curvearrowright I$.
We start by defining a presentation of $=^+$ which is easier for us to work with.

Let $\Delta \curvearrowright J$ be a free action of a countably-infinite group $\Delta$ on a countably-infinite set $J$ with infinitely-many orbits.
This partitions $J$ into infinitely-many ``copies'' of $\Delta$.
We assume $\Delta$ has the property that for finite $u, v \subseteq \Delta$ there is $\delta \in \Delta$ with $\delta u \cap v = \emptyset$, such as in the case of the additive group of the integers.
Let $T \subseteq J$ be a transversal for $\Delta \curvearrowright J$ (i.e. a set which intersects every $\Delta$-orbit exactly once) and fix an enumeration $T = \{t_n \mid n \in \omega \}$.
Fix also an enumeration $\Delta = \{\delta_m \mid m \in \omega\}$ where $\delta_0$ is the identity.
Thus every element of $J$ can be uniquely written in the form $\delta_m \cdot t_n$ for $m, n \in \omega$.

The group $S_J$ of permutations of $J$ has the natural action $S_J \curvearrowright J$.
Let $Q \le S_J$ be the closed subgroup of $q \in S_J$ that commute with the action of $\Delta$, i.e. for every $\delta \in \Delta$ we have $\delta \cdot (q \cdot a) = q \cdot (\delta \cdot a)$.

Now define $Y$ to be the $G_\delta$ set of all injections $y : J \rightarrow \mathbb{R}$.
This is a Polish space with the pointwise-convergence topology (putting the discrete topology on $J$), and moreover the natural action $Q \curvearrowright Y$ defined by $(q \cdot y)(a) = y(q^{-1} \cdot a)$ for all $q \in Q$, $y \in Y$, and $a \in J$ is continuous with respect to this topology.
This induces the orbit equivalence relation $E^Q_Y$.

\begin{lemma}
The equivalence relations $E^Q_Y$ and $=^+$ are Borel bi-reducible.
\end{lemma}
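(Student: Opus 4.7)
The plan is to establish the two inequalities separately. For $E^Q_Y \le_B =^+$, the first step is to characterize when two injections $p, p' \in Y$ are $Q$-equivalent. Unpacking the action $(g \cdot p)(a) = p(g^{-1} a)$ and using that $g \in Q$ commutes with the $\Delta$-action on $J$, one checks that $p \sim_Q p'$ iff $p(J) = p'(J)$ and the induced $\Delta$-actions on this common range agree, where the action induced by $p$ is defined by $\delta \cdot p(a) := p(\delta \cdot a)$. Granted this characterization, the $Q$-orbit of $p$ is completely determined by the countable set of triples $\{(i, a, \delta_i \cdot a) : a \in p(J), \, i \in \omega\}$, for a fixed enumeration $\{\delta_i\}_{i \in \omega}$ of $\Delta$. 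Using a Borel bijection $\omega \times \mathbb{R}^2 \cong \mathbb{R}$, this encodes the invariant as a countable subset of $\mathbb{R}$, and any Borel enumeration of it gives a Borel reduction $Y \to \mathbb{R}^\omega$ from $E^Q_Y$ to $=^+$.

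For the reverse direction $=^+ \le_B E^Q_Y$, the plan is to exploit the continuous surjective homomorphism $Q \twoheadrightarrow S_\infty$ established earlier in the paper (sending each $\pi \in Q$ to the induced permutation on $\Delta$-orbits). The idea is to construct a Borel map $f : \mathbb{R}^\omega \to Y$ so that the invariant of $f((x_n))$ from the characterization above is a Borel function of $\{x_n\}$ alone. Concretely, I would fix a Borel injection $\psi : \mathbb{R} \to (0, 1)$ and a Borel injection $\eta : \Delta \to \mathbb{Z}$, and set $f((x_n))(\delta \cdot t_n) = \psi(x_n) + \eta(\delta)$ as a first approximation. For generic $(x_n)$, the range decomposes canonically into $\mathbb{Z}$-translation orbits, one for each distinct $x_n$, and the induced $\Delta$-action is recoverable from the range (as shift by $\eta(\delta)$), so the invariant is determined by $\{\psi(x_n) + \mathbb{Z} : n \in \omega\}$, which is in turn a Borel function of $\{x_n\}$.

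The main obstacle is ensuring $p := f((x_n))$ is genuinely injective when $(x_n)$ has repeated values: if $x_n = x_{n'}$ with $n \neq n'$, two distinct orbits of $J$ collapse to the same range and $p$ fails to be injective. I expect this to be the main technical hurdle. The resolution would be to refine the encoding by introducing a canonical perturbation (for example, replacing $\eta(\delta)$ by $\eta(\delta) + n \alpha$ for an irrational $\alpha$, so that different orbits cannot coincide), while verifying that the resulting range is still Borel-recoverable from $\{x_n\}$ alone up to the induced $\Delta$-action. Alternatively, one could first pre-process via a Borel map $\mathbb{R}^\omega \to \mathbb{R}^\omega$ that spreads repeated values across disjoint coordinate slots. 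Either way, the endgame is to confirm that the invariant $(p(J), \text{action})$ depends only on $\{x_n\}$ and conversely determines it, giving the Borel reduction.
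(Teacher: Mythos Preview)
Your approach is essentially the paper's. For $E^Q_Y \le_B {=^+}$, encoding the $Q$-orbit as the pair (range, induced $\Delta$-action) and then as a countable set of reals is precisely what the paper does via the map $y \mapsto \big(h(\delta \mapsto y(\delta \cdot x_n))\big)_n$. For ${=^+} \le_B E^Q_Y$, your formula $(x_n) \mapsto \big(\delta \cdot t_n \mapsto \psi(x_n) + \eta(\delta)\big)$ is a concrete instance of the paper's $f(p)(\delta_n \cdot t_m) = g(p(m))(n)$, taking $g(x)(n) = \psi(x) + \eta(\delta_n)$.

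You are actually more careful than the paper in flagging the injectivity failure when $(x_n)$ has repeated values; the paper's construction has the same defect and simply asserts the map is a reduction. Of your two proposed repairs, the $n\alpha$-perturbation does \emph{not} work: the range of the resulting injection depends on the enumeration, not just on $\{x_n\}$. For instance, if $(x_n) = (a,b,a,b,\ldots)$ and $(x'_n) = (b,a,b,a,\ldots)$ with $a \neq b$, the respective ranges differ by a shift of $\alpha$ on each $\Delta$-block, and since $\alpha$ is irrational and $\eta(\Delta) \subseteq \mathbb{Z}$, no element of $Q$ can correct this; hence $f((x_n)) \not\mathrel{E^Q_Y} f((x'_n))$ despite $\{x_n\} = \{x'_n\}$. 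Your second repair is the correct one and is standard: first reduce $=^+$ on $\mathbb{R}^\omega$ to $=^+$ on injective sequences (e.g.\ send $(x_n)$ to the injective enumeration, by first occurrence, of $\{\phi(x_n, m) : n, m \in \omega\}$ for a fixed Borel bijection $\phi : \mathbb{R} \times \omega \to \mathbb{R}$), and then apply the construction.
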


\begin{proof}
We first see that $=^+ \; \le_B E^Q_Y$.
Recall that $=^+$ is the equivalence relation defined on $\mathbb{R}^\omega$ where $(x_n) =^+ (y_n)$ iff $\{x_n \mid n \in \omega\} = \{y_n \mid n \in \omega\}$.
If we restrict $=^+$ to the invariant $G_\delta$ set $\mathbb{R}^\omega_{\inj}$ of injections, we get the equivalence relation $=^+_\inj$.
The two equivalence relations $=^+$ and $=^+_\inj$ are Borel bi-reducible, see e.g. \cite[10.3.4]{Gao2008}.
Thus it suffices to show $=^+_\inj \le_B E^Q_Y$.
The benefit to using $=^+_\inj$ is that it is induced by the natural action of $S_\infty$ on $\mathbb{R}^\omega_\inj$.

Let $g : \mathbb{R} \rightarrow \mathbb{R}^\omega$ be a Borel function satisfying 
\[\{g(x)(n) \mid n \in \omega\} \cap \{g(x')(n) \mid n \in \omega\} = \emptyset\]
for any $x \neq x' \in \mathbb{R}$.
In other words, $g$ sends distinct reals to enumerations of disjoint countably-infinite sets of reals.

Now define a Borel function $f : \mathbb{R}^\omega \rightarrow Y$ by
\[ f(p) = y_p \quad \text{where} \quad y_p(\delta_m \cdot t_n) = g(p(n))(m)\]
for every $t_n \in T$ and $\delta_m \in \Delta$.
We claim that this is a reduction from $=^+_\inj$ to $E^Q_Y$.
If $p, p' \in \mathbb{R}^\omega_\inj$ satisfy $p =^+ p'$ then there is some $\sigma \in S_\infty$ such that $p' = \sigma \cdot p$, or in other words $p'(n) = p(\sigma^{-1}(n))$ for every $n \in \omega$.
Fix the permutation $q \in Q$ where $q \cdot (\delta_m \cdot t_n) = \delta_m \cdot t_{\sigma(n)}$ for all $n$.
Then $q \cdot f(p) = q \cdot y_p$ and for every $m$ and $n$ we have
\begin{align*}
(q \cdot y_p)(\delta_m \cdot t_n) &= y_p(q^{-1}(\delta_m \cdot t_n)) & [\text{action of } Q \text{ on } Y]\\
&= y_p(\delta_m \cdot t_{\sigma^{-1}(n)}) & [\text{choice of } q]\\
&= g(p(\sigma^{-1}(n)))(m) & [\text{definition of } y_p]\\
&= g((\sigma \cdot p)(n))(m) & [\text{action of } S_\infty \text{ on } \mathbb{R}^\omega_\inj]\\
&= g(p'(n))(m) & [\text{choice of } \sigma]\\
&= y_{p'}(\delta_m \cdot t_n) & [\text{definition of } y_{p'}].
\end{align*}
as desired.
Conversely, if $p =^+ p'$ does not hold then the ranges of $y_p$ and $y_{p'}$ are unequal and so cannot be in the same $Q$-orbit since $q \cdot y_p$ will have the same range as $y_p$ for every $q \in Q$.

Next, we see that $E^Q_Y \le_B \; =^+$.
Fix a Borel bijection $s : \mathbb{R}^{\Delta} \rightarrow \mathbb{R}$.
Fix also a bijection $\langle \cdot, \cdot \rangle : \omega \times \omega \rightarrow \omega$ and for each $i, j \in \omega$ define $a_{\langle i, j\rangle} := \delta_i \cdot t_j$ so that we have $J = \{a_{\langle i, j\rangle} \mid i, j \in \omega\} = \{a_n \mid n \in \omega\}$.
Define a Borel function $f : Y \rightarrow \mathbb{R}^\omega$ by
\[ f(y) = x_y \quad \text{where} \quad x_y(\langle m, n \rangle) = s[(y(\delta \cdot \delta_m \cdot t_n))_{\delta \in \Delta}]. \]
We claim that $f$ is a reduction from $E^Q_Y$ to $=^+$.
Let $g \in Q$ and $y \in Y$.
We show that $f(g \cdot y) =^+ f(y)$.
Indeed,
\begin{align*}
\{f(g \cdot y)(n) \mid n \in \omega \} &= \{x_{g \cdot y}(n) \mid n \in \omega \} & [\text{definition of } f] \\
&= \{s[((g \cdot y)(\delta \cdot a_n))_{\delta \in \Delta}] \mid n \in \omega\} & [\text{definition of } x_{g \cdot y}]\\
&= \{s[(y(g^{-1} \cdot \delta \cdot a_n))_{\delta \in \Delta}] \mid n \in \omega\} & [\text{action of } Q \text{ on } Y]\\
&= \{s[(y(\delta \cdot g^{-1}\cdot a_n))_{\delta \in \Delta}] \mid n \in \omega\} & [\text{definition of } Q]\\
&= \{s[(y(\delta \cdot a))_{\delta \in \Delta}] \mid a \in J\} \\
&= \{s[(y(\delta \cdot a_n))_{\delta \in \Delta}] \mid n \in \omega\} \\
&= \{x_y(n) \mid n \in \omega\} & [\text{definition of } x_y] \\
&= \{f(y)(n) \mid n \in \omega\} & [\text{definition of } f].
\end{align*}

On the other hand, suppose $y, y' \in Y$ satisfy $f(y) =^+ f(y')$.
For each $i$ let $\langle m_i, n_i \rangle \in \omega$ such that $x_y(\langle 0, i \rangle) = x_{y'}(\langle m_i, n_i \rangle)$ and thus $y(\delta \cdot t_i) = y'(\delta \cdot \delta_{m_i} \cdot t_{n_i})$ for every $\delta \in \Delta$.
Since $y, y'$ are injections, such $\langle m_i, n_i\rangle$ are unique.
Let $q$ be the element of $Q$ satisfying $q \cdot t_i = \delta_{m_i} \cdot t_{n_i}$ for every $i \in \omega$.
We claim that $q \cdot y = y'$.
Indeed,
\begin{align*}
(q^{-1} \cdot y')(\delta \cdot t_i) &= y'(q \cdot \delta \cdot t_i) & [\text{action of } Q \text{ on } Y] \\
&= y'(\delta \cdot q \cdot t_i) & [\text{definition of } Q]\\
&= y'(\delta \cdot \delta_{m_i} \cdot t_{n_i}) & [\text{choice of } q] \\
&= y(\delta \cdot t_i) & [\text{choice of } \langle m_i, n_i \rangle]
\end{align*}
We are done.
\end{proof}

Recall that for $Y$ which is $G_\delta$ in the product space $\mathbb{R}^J$, the usual basis for the topology consists of relative products $Y \cap U$ where $U = \prod_{a \in J} U_a$ such that for some finite $u \subseteq J$ we have $U_a = \mathbb{R}$ if and only if $a \not\in u$.
We call $u$ the \emph{support} of $U$, and we call such $U$ the basic open sets.

Let $N \trianglelefteq Q$ be the closed normal subgroup of $n \in Q$ satisfying $n \cdot a \in \Delta \cdot a$ for every $a \in J$.
Let $\theta : \Delta^T \isom N$ be the natural isomorphism, where $\Delta^T$ is the full product $\prod_{t \in T} \Delta_t$ and each $\Delta_t$ is a copy of $\Delta$.
Given $n \in N$ we define the \emph{support} of $n$ to be the set of $t \in T$ such that $n \cdot t \neq t$.

Let $H \le Q$ be the closed subgroup of $h \in Q$ such that $h \cdot t \in T$ for all $t \in T$.
Let $\chi : S_T \isom H$ be the natural isomorphism, where $S_T$ is the permutation group of $T$.
Given $h \in H$ we define the \emph{support} of $h$ to be the set of $t \in T$ such that $h \cdot t \neq t$.

Then $Q = HN$ and $H \cap N = \{e\}$ and so $Q$ is the semidirect product of $H$ and $N$.
Given $g \in Q$, we say that the \emph{support} of $g$ is the set of $t \in T$ such that $g \cdot t \neq t$.
Easily, the support of $g$ is the union of the supports of $h$ and $n$ for the unique $h \in H$ and $n \in N$ with $g = hn$.
In particular, observing that for any $u \subseteq T$ and $g \in Q$ that $g \in \Stab_Q(u)$ if and only if the support of $g$ is disjoint from $u$, we see that any $g \in \Stab_Q(u)$ can be written as $g = hn$ for $h \in \Stab_H(u)$ and $n \in \Stab_N(u)$.

Let $Q_0$ be the set of all $q \in Q$ with finite support.
Then $Q_0$ is countable dense subgroup of $Q$.
Define similarly $H_0$ to be the $h \in H$ with finite support and $N_0$ to be the $n \in N$ with finite support, in which case we get $Q_0 = H_0N_0$.
For any $g \in \Stab_{Q_0}(u)$ we can write $g = hn$ where $h \in \Stab_{H_0}(u)$ and $n \in \Stab_{N_0}(u)$.
Define also $S_T^{\fin}$ to be the finite-support permutations of $T$ and so $\chi$ restricts to an isomorphism $S_T^\fin \isom H_0$.

Let $I$ be a countably-infinite set and let $P \le S_I$ be a closed subgroup with the natural action $P \curvearrowright I$.
Let $X$ be a Polish $P$-space.
Fix a Baire-measurable homomorphism $f : Y \rightarrow X$ from $E^Q_Y$ to $E^P_X$.
From $f$ we will derive an indiscernible support function.
Assuming that $f$ is not degenerate in a way which we will soon define, the support function will be non-trivial.

\begin{claim}\label{claim:density}
The following are all true:
\begin{enumerate}
\item if basic open $U, V \subseteq Y$ have disjoint support then $U \cap V \neq \emptyset$;
\item if $u$ is the support of $U$ and $g \in Q$ then $g \cdot u$ is the support of $g \cdot U$;
\item for any nonempty open $U \subseteq Y$ and finite $u, W \subseteq T$ such that the support of $U$ is contained in $\Delta \cdot u$ and $u \subseteq W$, the set of $y$ for which there is an involution $h \in H_0$ with support $u \cup (h \cdot u)$ such that $(h \cdot u) \cap W = \emptyset$ and $h \cdot y \in U$ is dense and open;
\item for any nonempty open $U \subseteq Y$, the set of $y \in Y$ such that $N_0 \cdot y \cap U \neq \emptyset$ is dense and open.
\end{enumerate}
\end{claim}

\begin{proof}
Statements (1) and (2) are easy.

For (3), the fact that the set of such $y$ is open is easy, so we proceed to showing that it is dense.

Let $V \subseteq Y$ be any nonempty open set and $v \subseteq T$ finite such that the support of $V$ is contained in $\Delta \cdot v$.
Choose $\sigma \in S_T^\fin$ to be any involution with support $u \cup \sigma[u]$ where $\sigma[u] \cap W = \emptyset$ and $\sigma[u] \cap v = \emptyset$.
Let $h := \chi(\sigma)$ which is an involution and has support as desired.
By Claim (2) the support of $h \cdot U$ is contained in $\Delta \cdot (h \cdot u) = \Delta \cdot \sigma[u]$.
Then by Claim (1) we have $(h \cdot U) \cap V \neq \emptyset$.
Fixing any $y \in (h \cdot U) \cap V$ we see that $h^{-1} \cdot y \in U$.

For (4), let $V \subseteq Y$ be any nonempty open set.
Suppose $u \subseteq J$ is the support of $U$ and $v \subseteq J$ is the support of $V$.
By choice of $\Delta$ we can find $n \in N_0$ such that $(n \cdot u) \cap v = \emptyset$.
Indeed, let $\Delta_u$ be the finite set of all $\delta$ such that $\delta \cdot t \in u$ for some $t \in T$, and $\Delta_v$ the finite set of all $\delta$ such that $\delta \cdot t \in v$ for some $t \in T$ and choose $\hat{\delta}$ such that $(\hat{\delta} \cdot \Delta_u) \cap \Delta_v = \emptyset$.
Then let $n \in N_0$ such that $n \cdot t = \hat{\delta} \cdot t$ for every $t \in T \cap \Delta \cdot u$.
By (2), $n \cdot u$ is the support of $n \cdot U$ and by (1) there is some $y \in V \cap n \cdot U$.
Then $n^{-1} \cdot y \in U$.
\end{proof}

For each basic open $U$ and finite $W \subseteq T$, let $D^0_{U, W}$ be the set of $y$ as in Claim \ref{claim:density}.(3) and $D^1_{U}$ the set of $y$ as in Claim \ref{claim:density}.(4).

Fix $C_0 \subseteq Y$ comeager satisfying clauses (1)-(5) of Lemma \ref{lem:orbit_continuity}.
Let $C$ be the intersection of $C_0$ with all $g_0 \cdot D^0_{U, W}$ and $g_0 \cdot D^1_{U}$ for $g_0 \in Q_0$, basic open $U$, and finite $W \subseteq T$.
Since each $D^0_{U, W}$ and $D^1_{U}$ is open and dense by Claim \ref{claim:density}, $C$ is a countable intersection of comeager sets and thus comeager.
We claim that $C$ satisfies:
\begin{enumerate}
\item $f$ is continuous on $C$;
\item for all $y \in C$, $\forall^* g \in Q, \; g \cdot y \in C$;
\item for all $y \in C$, $\forall g \in Q_0, \; g \cdot y \in C$;
\item for all $y_0 \in C$ for every open neighborhood $V \subseteq P$ of the identity there is open $U \ni y_0$ and open neighborhood $W \subseteq Q$ of the identity such that for all $y \in C \cap U$ and $w \in W$ if $w \cdot y \in C \cap U$ then $f(w \cdot y) \in V \cdot f(y)$;
\item for all $y_0 \in C$ and $g_0 \in Q_0$ and nonempty open $W \subseteq P$ if $f(g_0 \cdot y_0) \in W \cdot f(y_0)$ then there is open $U \ni y_0$ such that for all $y \in C \cap U$, $g_0 \cdot y \in C$ and $f(g_0 \cdot y) \in W \cdot f(y)$.
\end{enumerate}
Indeed, clauses (1), (4), and (5) follow from the fact that $C \subseteq C_0$ and clause (3) is easily verified since $C_0$ is $Q_0$-invariant and so is the intersection of all the $g_0 \cdot D^0_{U, W}$ and $g_0 \cdot D^1_{U}$.
For clause (2), let $y \in C$. For any $g_0 \in Q_0$, basic open $U$, and finite $W$, the set of $g \in Q$ such that $g \cdot y$ is in $g_0 \cdot D^0_{U, W}$ and $g_0 \cdot D^1_U$ is open by continuity of the action and contains $Q_0$ by definition of $C$. Since $Q_0$ is dense in $Q$, this set is open and dense, hence comeager. The intersection of these countably-many sets along with the comeager set from clause (2) for $C_0$ is still comeager.

Fix some point $y_0 \in C$.
For finite $A \subseteq I$ and $u \subseteq T$, say that $u$ \emph{supports} $A$ iff there is a basic open neighborhood $U \ni y_0$ such that for every $y \in U \cap C$ and for every $g \in \Stab_{Q_0}(u)$, if $g \cdot y \in U$ then $f(g \cdot y) \in \Stab_P(A) \cdot f(y)$.
By properties (3) and (4) of $C$, for every finite $A$ there exists some finite $u$ such that $u$ supports $A$.

We would like to define the \emph{support} of any such $A$, but this requires a fair amount of work, namely the next few claims.

Our proof skids to a halt while we prove a technical lemma about permutations and involutions.
Recall that $S^\fin_T$ represents the countable dense subgroup of permutations of $T$ with finite support, meaning all but finitely-many elements are fixed.

\begin{claim}\label{claim:involution}
Let $u, v \subseteq T$ be finite and $\pi \in \Stab_{S^\fin_T}(u \cap v)$, and let $\sigma \in \Stab_{S^\fin_T}(v)$ be an involution with support $(u \setminus v) \cup \sigma[u \setminus v]$ such that $\sigma[u \setminus v]$ is disjoint from $u$, $v$, and the support of $\pi$.
Then there is some $\pi_1 \in \Stab_{S^\fin_T}(u)$ such that $\pi = \sigma \circ \pi_1 \circ \sigma$.
\end{claim}

\begin{proof}

We define $\pi_1 \in S_T$ by
\begin{equation*}
\pi_1(a) = 
    \begin{cases}
        \sigma(\pi(\sigma(a))) & \quad (A) \; \text{if } a \in \sigma[u \setminus v] \; \text{and} \; \pi(\sigma(a)) \in u \setminus v\\
        \pi(\sigma(a)) & \quad (B) \; \text{if } a \in \sigma[u \setminus v] \; \text{and} \; \pi(\sigma(a)) \not\in u \setminus v\\
        \sigma(\pi(\sigma(a))) & \quad (C) \; \text{if } a\not\in u \setminus v \; \text{and} \; \pi(a) \in u \setminus v \\
        a & \quad (D) \; \text{if } a \in u\\
        \pi(a) & \quad (E) \; \text{otherwise}.
    \end{cases}
\end{equation*}
Of course, we must show this is well-defined.
Conditions (A) and (B) are mutually exclusive by definition.
Furthermore, if $a$ falls into either case (A) or (B) then we have $a \in \sigma[u \setminus v]$, but by choice of $\sigma$ we would have $a$ not in the support of $\pi$, which would imply that $\pi(a) = a$ but that would make satisfying case (C) impossible.
Case (D) cannot coincide with either case (A) or (B) because if $a \in u$ then $a \not\in \sigma[u]$.
Case (D) cannot coincide with case (C) because if $a \in u$ and $a \not\in u \setminus v$ then this would imply $a \in u \cap v$, but since $\pi \in \Stab(u \cap v)$ this would mean $\pi(a) = a$ but then $\pi(a) \in u \setminus v$ cannot hold.

To see that $\pi = \sigma \circ \pi_1 \circ \sigma$, we fix an arbitrary $a \in T$ and check that $\pi(a) = (\sigma \circ \pi_1 \circ \sigma)(a)$. 

We case on which condition of $\pi_1$ that $\sigma(a)$ lands.

If $\sigma(a)$ satisfies condition (A) or (C) then by definition of $\pi_1$ we have $\pi_1(\sigma(a)) = \sigma(\pi(\sigma(\sigma(a))))$ and thus $\sigma(\pi_1(\sigma(a))) = \sigma(\sigma(\pi(\sigma(\sigma(a))))) = \pi(a)$ since $\sigma$ is an involution.

Next suppose $\sigma(a)$ satisfies condition (B), or equivalently, since $\sigma$ is an involution, we have $a \in u \setminus v$ and $\pi(a) \not\in u \setminus v$.
In particular, $a$ is in the support of $\pi$ and so by choice of $\sigma$ we have $\pi(a) \not\in \sigma[u \setminus v]$.
In particular, $\pi(a)$ is not in $(u \setminus v) \cup \sigma[u \setminus v]$ which is the support of $\sigma$, and so $\sigma(\pi(a)) = \pi(a)$.
By definition of $\pi_1$ we have $\pi_1(\sigma(a)) = \pi(\sigma(\sigma(a))) = \pi(a)$ and thus $\sigma(\pi_1(\sigma(a))) = \sigma(\pi(a)) = \pi(a)$.

Now suppose $\sigma(a)$ satisfies condition (D), i.e. $\sigma(a) \in u$.
Then by definition of $\pi_1$ we have $\sigma(\pi_1(\sigma(a))) = \sigma(\sigma(a)) = a$.
This is equal to $\pi(a)$.
Indeed, if $\sigma(a) \in u \cap v$ then $a = \pi(a)$ since $\pi$ is in the stabilizer of $u \cap v$.
On the other hand if $\sigma(a) \in u \setminus v$ then $a \in \sigma[u \setminus v]$ and in particular is not in the support of $\pi$ so $\pi(a) = a$.

Finally suppose $\sigma(a)$ satisfies condition (E).
Then by definition of $\pi_1$ we have $\sigma(\pi_1(\sigma(a))) = \sigma(\pi(\sigma(a)))$.
Since $\sigma(a)$ does not satisfy conditions (A) or (B), we have $\sigma(a) \not \in \sigma[u \setminus v]$, or equivalently, $a \not\in u \setminus v$.
Moreover $\sigma(a) \not\in u$ since $\sigma(a)$ does not satisfy condition (D).
Thus $a$ is not in the support of $\sigma$ and so $\sigma(a) = a$.
We also have $\sigma(\pi(a)) = \pi(a)$ because if $a$ is not in the support of $\pi$ this is clearly true, and otherwise if $\pi(a)$ is in the support of $\sigma$ it would have to be in $u \setminus v$, but this would imply that $\sigma(a) = a$ satisfies condition (C).
\end{proof}

Having finished this unpleasant detour, we continue with our proof.

\begin{claim}
For every finite $A \subseteq I$ and $u, v \subseteq T$, if both $u$ and $v$ support $A$ then $u \cap v$ supports $A$.
\end{claim}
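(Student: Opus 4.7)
The plan is to show that if basic open neighborhoods $U_1, U_2 \ni y_0$ witness that $u, v$ support $A$, then after intersecting and possibly shrinking to a basic open $U \subseteq U_1 \cap U_2$, this $U$ witnesses that $u \cap v$ supports $A$. The approach has two ingredients: a factorization of elements of $\Stab_{Q_0}(u \cap v)$ into products of elements from $\Stab_{Q_0}(u) \cup \Stab_{Q_0}(v)$, and a chaining argument using the support properties of $u$ and $v$ to handle each factor.

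For the factorization, take $g \in \Stab_{Q_0}(u \cap v)$ and write $g = \chi(\sigma) h$ with $\sigma \in S_T^{\mathrm{fin}}$ and $h \in N_0$, which is possible since $Q_0 = \chi(S_T^{\mathrm{fin}}) N_0$. Using that $T$ is a $\Delta$-transversal and $\chi$ commutes with $\Delta$, the condition $g \cdot t = t$ for $t \in u \cap v$ forces both $\sigma$ and $h$ to fix $u \cap v$ pointwise. The element $h$ splits as $h = h_u h_v$ by setting $h_u(t) = t$ for $t \in u$ and $h_u(t) = h(t)$ otherwise, and letting $h_v := h_u^{-1} h$; a direct computation (using that $h$ fixes $u \cap v$) shows $h_v$ fixes $v$ pointwise. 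The permutation $\sigma$, since it fixes $u \cap v$ pointwise, has all nontrivial cycles in $T \setminus (u \cap v)$, and each transposition $(a,b)$ in its cycle decomposition either already lies in $\Stab(u) \cup \Stab(v)$, or (when $a \in u \setminus v$ and $b \in v \setminus u$) can be rewritten as $(a,c)(b,c)(a,c)$ for a helper $c \in T \setminus (u \cup v)$ (which is infinite), each of the three factors lying in $\Stab(u) \cup \Stab(v)$.

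For the chaining, write $g = g_n g_{n-1} \cdots g_1$ where each $g_i \in \Stab_{Q_0}(u) \cup \Stab_{Q_0}(v)$, and set $y_k = g_k \cdots g_1 \cdot y$. Each $y_k$ lies in $C$ by property (3) of Lemma \ref{lem:orbit_continuity}. If each consecutive pair $(y_{k-1}, y_k)$ lies in $U$, then the support property for whichever of $u$ or $v$ contains $g_k$ yields $f(y_k) \in \Stab_P(A) \cdot f(y_{k-1})$, and since $\Stab_P(A)$ is a group, composing gives $f(g \cdot y) \in \Stab_P(A) \cdot f(y)$.

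The main obstacle is precisely ensuring each intermediate $y_k$ remains in $U$, since an arbitrary factorization of $g$ need not preserve the neighborhood. The strategy is to choose the helper elements $c \in T \setminus (u \cup v)$ adaptively: writing $U = \{y : y(a) \in V_a,\ a \in F\}$ for finite $F \subseteq J$, one selects $c$ so that its $\Delta$-orbit $\Delta \cdot c$ is disjoint from $F$, so that the transpositions $\chi((a,c))$ and $\chi((b,c))$ only modify $y$ on coordinates outside $F$ (and we may further shrink $U$ to include in its support the finitely many coordinates on which $h_u, h_v$ act, with value constraints matched to $y_0$). Verifying that this adaptive choice keeps all intermediate points in $U$ is the delicate bookkeeping at the heart of the argument; once done, transitivity of membership in $\Stab_P(A)$ completes the proof.
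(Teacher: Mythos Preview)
Your overall strategy—factor $g \in \Stab_{Q_0}(u\cap v)$ as a product of elements of $\Stab_{Q_0}(u)\cup\Stab_{Q_0}(v)$ and then chain through the support hypotheses—is exactly the paper's strategy, and your splitting $h=h_u h_v$ is essentially the same as the paper's $h=h_1h_0$. The gap is precisely where you flag the ``delicate bookkeeping'': your proposed mechanism for keeping the intermediate points in $U$ does not work as stated.

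Specifically, the assertion that choosing a helper $c$ with $\Delta\cdot c$ disjoint from $F$ makes $\chi((a,c))$ and $\chi((b,c))$ ``only modify $y$ on coordinates outside $F$'' is false. The permutation $\chi((a,c))$ swaps the entire $\Delta$-orbits of $a$ and $c$, so it changes the values of $y$ on $\Delta\cdot a$ as well as on $\Delta\cdot c$; when $a\in u\setminus v$ there is no reason $\Delta\cdot a$ should miss $F$. Likewise, the parenthetical suggestion to ``shrink $U$ to include in its support the finitely many coordinates on which $h_u,h_v$ act'' is circular: $h_u,h_v$ depend on the particular $g$ being tested, whereas $U$ must be fixed in advance and work for \emph{all} $g\in\Stab_{Q_0}(u\cap v)$ with $g\cdot y\in U$.

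The paper avoids both problems by a different device. Rather than decomposing $\sigma$ transposition-by-transposition, it conjugates once by a single involution $\sigma_0$ that moves all of $u\setminus v$ into a fresh region of $T$ disjoint from a large finite set $W$ (containing the supports of $U$, $\pi$, and $h$), obtaining a fixed-length factorization $\pi=\sigma_0\,\pi_1\,\sigma_0$ with $\sigma_0\in\Stab_{S_T}(v)$ and $\pi_1\in\Stab_{S_T}(u)$. The two intermediate points then depend on $\sigma_0$ only through the values of $y$ on the $\Delta$-orbits of $\sigma_0[u\setminus v]$, and a density argument over the (infinitely many) choices of target region lets one pick $\sigma_0$ so that these values satisfy the finitely many open constraints defining $U$. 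Your per-transposition scheme could in principle be salvaged by an analogous density argument at each step, but as written the mechanism you describe does not do the job.
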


\begin{proof}
By taking an intersection, let $U \ni y_0$ be a basic open set witnessing both that $u$ and $v$ support $A$.
We argue that $U$ in fact witnesses that $u \cap v$ supports $A$ as well.
To that end, let $y \in U \cap C$ and $g \in \Stab_{Q_0}(u \cap v)$ be arbitrary such that $g \cdot y \in U$.
We need to show that $f(g \cdot y) \in \Stab_P(A) \cdot f(y)$.
We will do this by finding $g_1, g_2, g_3 \in Q_0$ such that 
\begin{enumerate}
\item[(i)] $g = g_3g_2g_1$
\item[(ii)] $g_1, g_3 \in \Stab_{Q_0}(v)$ and $g_2 \in \Stab_{Q_0}(u)$
\item[(iii)] $g_1 \cdot y$ and $g_2g_1 \cdot y$ and $g_3g_2g_1\cdot y$ are in $C \cap U$.
\end{enumerate}
This will suffice as this would imply that $f(g_1 \cdot y)
 \in \Stab_P(A) \cdot f(y)$ and $f(g_2g_1 \cdot y) \in \Stab_P(A) \cdot f(g_1 \cdot y)$ and $f(g \cdot y) = f(g_3g_2g_1 \cdot y) \in \Stab_P(A) \cdot f(g_2g_1 \cdot y)$ which all together implies $f(g \cdot y) \in \Stab_P(A) \cdot f(y)$.

Write $g = hn$ where $h \in \Stab_{H_0}(u \cap v)$ and $n \in \Stab_{N_0}(u \cap v)$.
Write $h = \chi(\pi)$ where $\pi \in \Stab_{S_T^\fin}(u \cap v)$.
Write $n =  n_{v \setminus u} \hat{n} n_{u \setminus v}$ where the support of $\hat{n}$ is disjoint from $(u \cup v)$ and $\supp(n_{v \setminus u}) \subseteq v \setminus u$ and $\supp(n_{u \setminus v}) \subseteq u \setminus v$.

Let $W \subseteq T$ be a large enough finite set such that it contains the support of $\pi$ and $n$ and $u$ and $v$ and also $\Delta \cdot W$ contains the support of $U$.

By property (3) of $C$, since $n_{u \setminus v} \cdot y \in D^0_{U, W}$, we find a finite-support involution $\sigma \in S^\fin_T$ with support $(u \setminus v) \cup \sigma(u \setminus v)$ such that $\sigma(u \setminus v) \cap W = \emptyset$ and $\chi(\sigma) n_{u \setminus v} \cdot y \in U$.

Let $\pi_1 \in \Stab_{S_T^\fin}(u)$ as in Claim \ref{claim:involution} so that $\pi = \sigma \circ \pi_1 \circ \sigma$.

Now define $g_1 = \chi(\sigma) n_{u \setminus v}$ and $g_2 = \chi(\pi_1) n_{v \setminus u} \hat{n}$ and $g_3 = \chi(\sigma)$.
We can easily see that condition (ii) is satisfied.
Moreover, observing that $\chi(\sigma)$ commutes with both $\hat{n}$ and $n_{v \setminus u}$ as they have disjoint supports, we indeed have
\begin{align*}
g_3g_2g_1 &= \chi(\sigma)\chi(\pi_1) n_{v \setminus u}\hat{n} \chi(\sigma) n_{u \setminus v} = \chi(\sigma)\chi(\pi_1)  \chi(\sigma) n_{v \setminus u}\hat{n} n_{u \setminus v}\\
&= \chi(\sigma \circ \pi_1 \circ \sigma) n = \chi(\pi) n = h n
\end{align*}
and so condition (i) is met.

Moving on to condition (iii), the only thing left to check is that $g_2g_1 \cdot y \in U$.
Write $U = U_{u \setminus v} \cap \hat{U}$ where $U_{u \setminus v}$ has support contained in $\Delta \cdot (u \setminus v)$ and $\hat{U}$ has support disjoint from $\Delta \cdot (u \setminus v)$.
Since $\chi(\sigma) n_{u \setminus v} \cdot y \in U \subseteq U_{u \setminus v}$ and $\chi(\pi_1) n_{v \setminus u} \hat{n} \in \Stab_{Q_0}(u)$ we have $g_2g_1 \cdot y \in U_{u \setminus v}$.
On the other hand, since $\chi(\pi) n \cdot y \in U \subseteq \hat{U}$ and $\sigma \in \Stab_{S_T}(W \setminus (u \setminus v))$ we have $g_2g_1 \cdot y =g_3^{-1}g \cdot y =  \chi(\sigma) \chi(\pi) n \cdot y \in \hat{U}$.
Thus $g_2g_1 \cdot y \in U_{u \setminus v} \cap \hat{U} = U$.

We are done.
\end{proof}

We conclude in particular that for every $A$ there is a minimal $u$ which supports $A$.
The minimal support of $A$, denoted $\supp(A)$, is \textbf{the support of $A$}.
Now we wish to show that $\supp$ is indiscernible.
We will prove this after the following technical claim.

\begin{claim}\label{claim:supp_inv}
For any $u$ and $A$, if $u$ supports $A$ as witnessed by neighborhood $U_0 \ni y_0$, then for any finite-support $\sigma \in S_T$ and $n \in N_0$ and $h \in P$ with $\chi(\sigma)n \cdot y_0 \in U_0$ and $f(\chi(\sigma)n \cdot y_0) = h \cdot f(y_0)$, we have that $\sigma^{-1} \cdot u$ supports $h^{-1} \cdot A$.
\end{claim}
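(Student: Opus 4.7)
The plan is to show that $\sigma^{-1} \cdot u$ supports $g^{-1} \cdot A$ by transporting the support condition at $(u, A)$ along conjugation by $\chi(\sigma) h$. The key conjugation trick is to set $\tilde{g} := \chi(\sigma) h g' h^{-1} \chi(\sigma)^{-1}$ for any test element $g' \in \Stab_{Q_0}(\sigma^{-1} u)$ and verify $\tilde{g} \in \Stab_{Q_0}(u)$. For $a \in u$: $\chi(\sigma)^{-1}$ sends $a$ to $\sigma^{-1}(a) \in \sigma^{-1} u$; then $h^{-1}$ sends this to some $\delta \cdot \sigma^{-1}(a)$ with $\delta \in \Delta$; then $g'$ fixes this point because elements of $Q$ commute with the $\Delta$-action and $g'$ fixes $\sigma^{-1}(a)$; finally $h$ and $\chi(\sigma)$ return the point to $a$. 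Membership $\tilde{g} \in Q_0$ is automatic because $Q_0$ is a subgroup containing each factor.

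To produce the witnessing neighborhood $U_0' \ni y_0$, I would set $V := \Stab_P(g^{-1} A)$, which is an open subgroup of $P$ since $g^{-1} A$ is finite. Applying part (5) of Lemma \ref{lem:orbit_continuity} at $x_0 = y_0$ with the element $\chi(\sigma) h \in Q_0$ and the open set $W := gV \subseteq P$ (using $f(\chi(\sigma) h \cdot y_0) = g \cdot f(y_0) \in gV \cdot f(y_0)$), I obtain a neighborhood $U_1 \ni y_0$ such that $f(\chi(\sigma) h \cdot x) \in gV \cdot f(x)$ for every $x \in C \cap U_1$. I then define $U_0' := U_1 \cap (\chi(\sigma) h)^{-1}[U_0]$, which is an open neighborhood of $y_0$.

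For the verification, take $y \in U_0' \cap C$ and $g' \in \Stab_{Q_0}(\sigma^{-1} u)$ with $g' \cdot y \in U_0'$. Set $z := \chi(\sigma) h \cdot y$, which lies in $U_0 \cap C$ by the definition of $U_0'$ and property (3) of Lemma \ref{lem:orbit_continuity}; likewise $g' \cdot y \in C \cap U_1$ and $\tilde{g} \cdot z = \chi(\sigma) h \cdot (g' \cdot y) \in U_0$. The support property at $(u, A)$ yields $f(\tilde{g} \cdot z) = p \cdot f(z)$ for some $p \in \Stab_P(A)$, while the choice of $U_1$ gives $f(z) = g v_1 \cdot f(y)$ and $f(\tilde{g} \cdot z) = g v_2 \cdot f(g' \cdot y)$ for some $v_1, v_2 \in V$. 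Solving yields $f(g' \cdot y) = v_2^{-1} g^{-1} p g v_1 \cdot f(y)$; since $g^{-1} p g \in g^{-1} \Stab_P(A) g = V$ and $V$ is a subgroup, the product $v_2^{-1} g^{-1} p g v_1$ lies in $V = \Stab_P(g^{-1} A)$, as required. The main technical obstacle is the verification that $\tilde{g} \in \Stab_{Q_0}(u)$; this crucially uses that elements of $Q$ commute with the $\Delta$-action and that $h \in N$ moves each point within its $\Delta$-orbit. Once that is in hand, the rest is a routine synthesis of orbit continuity with the existing support at $(u, A)$.
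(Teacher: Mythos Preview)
Your proof is correct and follows essentially the same route as the paper's: both arguments conjugate the test element by $\chi(\sigma)h$ to land in $\Stab_{Q_0}(u)$, invoke the orbit continuity lemma to propagate the relation $f(\chi(\sigma)h\cdot y)\in g\Stab_P(g^{-1}A)\cdot f(y)$ to a neighborhood of $y_0$, and then chain three membership relations to conclude. Your write-up is in fact a bit more careful in two places: you explicitly invoke clause~(5) of Lemma~\ref{lem:orbit_continuity} (the paper cites clause~(4), which does not directly give the needed conclusion for a fixed $\chi(\sigma)h\in Q_0$), and you spell out why $(\chi(\sigma)h)g'(\chi(\sigma)h)^{-1}$ fixes each $a\in u$ using that $Q$ commutes with the $\Delta$-action and $h\in N$ moves points only within their $\Delta$-orbit.
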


\begin{proof}
Let $v = \sigma^{-1} \cdot u$ and $B = h^{-1} \cdot A$. 
Observe that $\Stab_P(B) = h^{-1} \Stab_P(A) h$ and $\Stab_{S_T}(v) = \sigma^{-1} \Stab_{S_T}(u) \sigma$. 
We show that $v$ supports $B$. 

By Clause (5) of the choice of $C$, we may find open $U_1 \ni y_0$ such that for every $y \in U_1 \cap C$, $f(\chi(\sigma)n \cdot y) \in h \Stab_P(B) \cdot f(y)$.
By shrinking $U_1$ if necessary we can also guarantee for any such $y$ that $\chi(\sigma)n \cdot y \in U_0$.
We argue that $U_1$ witnesses that $v$ supports $B$.

Let $y \in U_1 \cap C$ be arbitrary. 
Let $g_v \in \Stab_{Q_0}(v)$ arbitrary and suppose $g_v \cdot y \in U_1$. 
We want to show that $f(g_v \cdot y) \in \Stab_P(B) \cdot f(y)$.
Indeed, because $y \in U_1$ we have
\begin{equation}\label{eq:indis1}
    f(\chi(\sigma)n \cdot y) \in h \Stab_P(B) \cdot f(y)
\end{equation}
and because $g_v \cdot y \in U_1$ we have
\begin{equation}\label{eq:indis2}
    f(\chi(\sigma)n g_v \cdot y) \in h \Stab_P(B) \cdot f(g_v \cdot y).
\end{equation}
Observe that $ng_vn^{-1} \in \Stab_{Q_0}(v) = \chi(\sigma)^{-1} \Stab_{Q_0}(u) \chi(\sigma)$ so we have
\begin{equation}\label{eq:indis25}
(\chi(\sigma)n) g_v (\chi(\sigma)n)^{-1} \in \Stab_{Q_0}(u).
\end{equation}

Therefore, because $\chi(\sigma)n \cdot y \in U_0$ and $\chi(\sigma)ng_v \cdot y \in U_0$ and 
\[\chi(\sigma)ng_v \cdot y = [(\chi(\sigma)n) g_v (\chi(\sigma)n)^{-1}] \chi(\sigma)n \cdot y,\]
by Equation \ref{eq:indis25} we have
\begin{equation}\label{eq:indis3}
    f(\chi(\sigma)n \cdot y) \in \Stab_P(A) \cdot f(\chi(\sigma)n g_v \cdot y).
\end{equation}
Thus putting these together we have
\begin{align*}
    f(g_v \cdot y) & \in \Stab_P(B) h^{-1} \cdot f(\chi(\sigma)n g_v \cdot y)\\
    & \subseteq \Stab_P(B) h^{-1} \Stab_P(A) \cdot f(\chi(\sigma)n \cdot y) \\
    & \subseteq \Stab_P(B) h^{-1} \Stab_P(A) h \Stab_P(B) \cdot f(y) \\
    & = \Stab_P(B) \cdot f(y),
\end{align*}
where the first inclusion is from Equation \ref{eq:indis2}, the second from Equation \ref{eq:indis3}, and the third from Equation \ref{eq:indis1} as desired.
\end{proof}

Now we see that $\supp$ is indiscernible.
Let $A, B \subseteq I$ be finite and $u, v \subseteq T$ finite such that $\supp(A) = u$ and $\supp(B) = v$, and let $U_0 \ni y_0$ be a basic open neighborhood witnessing this.
Let $v' \subseteq T$ be arbitrary such that $v' \isom_u v$.

Fix a finite-support $\sigma \in S_T$ such that $\sigma(a) = a$ for every $a \in u$ and $\sigma[v'] = v$.
We claim that there is some $n \in \Stab_{N_0}(u)$ such that $\chi(\sigma) n \cdot y_0 \in U_0$.
Indeed, write $U_0 = V_0 \cap V_1$ where the support of $V_0$ is contained in $\Delta \cdot u$ and the support of $V_1$ is disjoint from $\Delta \cdot u$.
Since $y_0 \in C \subseteq D^1_{\chi(\sigma)^{-1} \cdot V_1}$ we can find some $\hat{n} \in N_0$ such that $\hat{n} \cdot y_0 \in \chi(\sigma)^{-1} \cdot V_1$.
Write $\hat{n} = n_0 n_1$ where the support of $n_0$ is contained in $u$ and the support of $n_1$ is disjoint from $u$.
We claim that $n := n_1$ works.
Indeed, $\chi(\sigma) n_1 \cdot y_0 \in V_0$ because $\chi(\sigma) n_1 \in \Stab_{Q_0}(u)$ and so $(\chi(\sigma)n_1)^{-1} \cdot V_0 = V_0$.
And $\chi(\sigma)n_1 \cdot y_0 \in V_1$ because $\chi(\sigma)n_0n_1 \cdot y_0 =\chi(\sigma) \hat{n} \cdot y_0 \in V_1$ and $\chi(\sigma)$ and $n_0$ commute and $n_0^{-1} \cdot V_1 = V_1$.

Because $\chi(\sigma)n \in \Stab_{Q_0}(u)$, there is some $g \in \Stab_P(A)$ such that $f(\chi(\sigma)n \cdot y_0) = g \cdot f(y_0)$.
By Claim \ref{claim:supp_inv} this means that $v'$ is a support of $g^{-1} \cdot B$, and because $g \in \Stab_P(A)$, we have $B \isom_A g^{-1} \cdot B$.
A symmetric argument gives us that $v'$ is in fact the minimal support of $g^{-1} \cdot B$, as desired.

Our final task is to show that $\supp$ is nontrivial.
\begin{claim}
If $\supp$ is trivial, then $f(y) \mathrel{E^P_X} f(y')$ for every $y, y' \in C$.
\end{claim}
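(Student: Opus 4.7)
The plan is to exploit two simple but powerful facts: (i) $\{\Stab_P(A) : A \subseteq I \text{ finite}\}$ is a neighborhood basis of $1_P$ in $P$ consisting of open subgroups, so $\bigcap_A \Stab_P(A) = \{1_P\}$, and (ii) the density of $Q$-orbits in $Y$ (since $E^Q_Y$ is Borel bi-reducible with $=^+$, whose orbits are dense in $\mathbb{R}^\omega$).

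First I would fix an exhausting chain $A_0 \subseteq A_1 \subseteq \cdots$ of finite subsets of $I$ with $\bigcup_n A_n = I$, and open neighborhoods $U_0 \supseteq U_1 \supseteq \cdots$ of $y_0$ each witnessing (via the hypothesis $\supp(A_n) = \emptyset$) that $\emptyset$ supports $A_n$. The key local computation is: for $y, g \cdot y \in \bigcap_n U_n \cap C$ with $g \in Q_0$, one has $f(g \cdot y) = f(y)$. Indeed, for each $n$ there exists $p_n \in \Stab_P(A_n)$ with $p_n \cdot f(y) = f(g \cdot y)$; since the $\Stab_P(A_n)$ form a neighborhood basis of $1_P$, $p_n \to 1_P$ in $P$, and by continuity of the action $f(g \cdot y) = \lim_n p_n \cdot f(y) = f(y)$.

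Next, by Lemma~\ref{lem:orbit_continuity}(3), $Q_0 \cdot y_0 \subseteq C$, and combined with density of $Q \cdot y_0$ in $Y$ and density of $Q_0$ in $Q$, the set $Q_0 \cdot y_0$ is a countable dense subset of $Y$ contained in $C$. For any $y^* \in C$, approximate $y^*$ by $y_k = g_k \cdot y_0 \in Q_0 \cdot y_0$ with $y_k \to y^*$; continuity of $f$ on $C$ (Lemma~\ref{lem:orbit_continuity}(1)) gives $f(y_k) \to f(y^*)$. For $y_k \in U_n$, the support property at $y_0$ forces the witnessing $p_k \in P$ (with $p_k \cdot f(y_0) = f(y_k)$) to lie in $\Stab_P(A_n)$, so whenever $y^* \in \bigcap_n U_n \cap C$ the $p_k$ eventually land in every $\Stab_P(A_n)$, hence $p_k \to 1_P$ and $f(y^*) = f(y_0)$.

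The main obstacle is extending this from ``$f(y^*) = f(y_0)$ for $y^* \in \bigcap_n U_n \cap C$'' to all $y^* \in C$. My plan is to show that the triviality of $\supp$ at $y_0$ propagates to triviality of the analogously-defined support function at every $y^* \in C$, via a translation argument parallel to the second claim in the indiscernibility proof (which related supports of $A$ and $g^{-1} \cdot A$ under $Q_0$-translations of $y_0$). Once triviality of $\supp$ holds at every base point $y^* \in C$, applying the local argument at each $y^*$ shows $f$ is locally constant on $C$ modulo $E^P_X$; combined with the $E^Q_Y$-saturation of the set $\{y \in C : f(y) \mathrel{E^P_X} f(y_0)\}$ and the density of $Q_0 \cdot y_0$ in $Y$, this forces the full conclusion. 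The delicate part of the plan is making this propagation of triviality rigorous for $y^*$ outside the $Q$-orbit of $y_0$, where one must combine the orbit continuity lemma at $y^*$ with the density of $Q \cdot y_0$ to effectively transfer neighborhood witnesses from $y_0$ to $y^*$.
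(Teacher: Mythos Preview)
Your plan assembles the right ingredients but takes an unnecessarily roundabout route, and the ``propagation of triviality'' step you flag as delicate is in fact the genuine gap: the second claim you reference only transfers supports along $Q_0$-translates of $y_0$, so it says nothing about base points outside the $Q$-orbit of $y_0$, and closing that would itself require a limiting argument you have not supplied.

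The paper avoids this entirely with one simple idea you are missing: rather than approximating an arbitrary $y\in C$ by points of $Q_0\cdot y_0$ and comparing them to $y_0$, push $y$ itself toward $y_0$ via its own $Q_0$-orbit and compare \emph{consecutive} pushes with each other. Concretely, shrink the witnessing neighborhoods so that $U_n\subseteq U_{A_n}$ has $d$-diameter $<1/n$ (arranging $U_{A_{n+1}}\subseteq U_{A_n}$ by intersecting), and use density of $Q_0$-orbits to choose $h_n\in Q_0$ with $h_n\cdot y\in U_n\cap C$. Then $h_n\cdot y$ and $h_{n+1}\cdot y$ both lie in $U_{A_n}\cap C$ and differ by $h_{n+1}h_n^{-1}\in Q_0=\Stab_{Q_0}(\emptyset)$, so the support hypothesis yields $g_n\in\Stab_P(A_n)$ with $f(h_{n+1}\cdot y)=g_n\cdot f(h_n\cdot y)$. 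Setting $g_n^*=g_n\cdots g_0$, the sequence $(g_n^*)$ is Cauchy in $P$ since $g_{n+1}^*(g_n^*)^{-1}=g_{n+1}\in\Stab_P(A_{n+1})$; letting $g_\infty^*$ be its limit and using $h_n\cdot y\to y_0$ together with continuity of $f$ on $C$, one gets $g_\infty^*\cdot f(h_0\cdot y)=f(y_0)$. Since $f$ is a homomorphism, $f(h_0\cdot y)\mathrel{E^P_X} f(y)$, and the claim follows.

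Your partial result that $f(y^*)=f(y_0)$ for $y^*\in\bigcap_n U_n\cap C$ is correct, but that $G_\delta$ set need not exhaust $C$, and the Cauchy-sequence trick above is precisely what lets you reach every $y\in C$ without ever changing the base point.
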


\begin{proof}
Suppose $\supp$ is trivial, i.e. the support of every $A$ is $u$ for some fixed $u$.
In other words, for every finite $A \subseteq I$, there is an open neighborhood $U_A$ of $y_0$ such that for every $y \in U_A \cap C$ and every $g \in \Stab_{Q_0}(u)$ with $g \cdot y \in U_A$, we have $f(g \cdot y) \in \Stab_P(A) \cdot f(y)$.

Let $y \in Y \cap C$ be arbitrary.
We will show that $f(y) \mathrel{E^P_X} f(y_0)$.
First we claim that for every basic open $U$ of $y_0$ there is some $n \in \Stab_{N_0}(u)$ such that $n \cdot y \in U$.
Indeed, since $y \in D^1_{U}$ there is some $\hat{n} \in N_0$ such that $\hat{n} \cdot y \in U$.
Write $\hat{n} = n_0n_1$ for $n_0, n_1 \in N_0$ where the support of $n_0$ is contained in $u$ and the support of $n_1$ is disjoint from $u$, and writing $U = V_0 \cap V_1$ where the support of $V_0$ is contained in $\Delta \cdot u$ and the support of $V_1$ is disjoint from $\Delta \cdot u$.
We claim that $n_1 \cdot y \in U$.
Indeed, $n_1 \cdot y \in V_0$ because $n_0n_1 \cdot y \in U \subseteq V_0$ and $n_0^{-1} \cdot V_0 = V_0$, and $n_1 \cdot y \in V_1$ because $y \in U \subseteq V_1$ and $n_1 \cdot V_1 = V_1$.
Observe that $n_1 \in \Stab_{N_0}(u)$.

Write $I$ as an increasing union $\bigcup_n A_n$ of finite sets.
Fix a compatible complete metric $d$ on $Y$.
For each $n$, let $U_n$ be a basic open neighborhood of $y_0$ with $d$-diameter less than $1/n$ and contained in $U_{A_n}$.
Let $g_0$ be the identity and for every $n$, let $g_{n+1} \in Q_0$ (actually in $N_0$) such that $g_{n+1} \cdot y \in C \cap U_{n}$.
We have $g_n \cdot y \rightarrow y_0$, and so by continuity of $f$ on $C$, we have $f(g_n \cdot y) \rightarrow f(y_0)$.

For every $n$ we may find $h_n \in \Stab_P(A_n)$ such that $f(g_{n+1}\cdot y) = h_n \cdot f(g_n \cdot y)$.
Defining $h^*_n := h_n ... h_0$, we have $f(g_{n+1} \cdot y) = h^*_n \cdot f(y)$ for every $n$.
We claim that the sequence $(h^*_n)$ is Cauchy and thus $h^*_n \rightarrow h^*_\infty$ for some $h^*_\infty \in P$, and so by continuity of the group action, we have $h^*_\infty \cdot f(y) = f(y_0)$.

Indeed, let $I = \{b_i \mid i \in \omega\}$ and recall for $h, h' \in S_I$ if we define $d(h, h') := 2^{-n-1}$ where $n$ is least such that $h(b_n) \neq h'(b_n)$ and if $D(h, h') := d(h, h') + d(h^{-1}, h'^{-1})$ then $D$ is a compatible complete metric on $S_I$ and thus on the closed subgroup $P$.
The sequence $(h^*_n)$ is Cauchy because for any $n$, choosing $m$ big enough such that $A_m \supseteq \{b_i \mid i < n\}$ we have for any $m < m_0 < m_1$ that $h^*_{m_0} \upharpoonright A_m = h^*_{m_1} \upharpoonright A_m$ and $(h^*_{m_0})^{-1} \upharpoonright A_m = (h^*_{m_1})^{-1} \upharpoonright A_m$ and so $D(h^*_{m_0}, h^*_{m_1}) \le 2^{-n-1} + 2^{-n-1} = 2^{-n}$.
\end{proof}

Observe that for any $a, a' \in J$, the set of $y$ such that $y(a) \neq y_0(a')$ is dense and open.
Thus there is some $y \in C$ such that $y(a) \neq y_0(a')$ for every $a, a' \in J$.
In particular, this means $Q \cdot y \neq Q \cdot y_0$.
By the same argument, every orbit of $E^Q_Y$ is meager.
We conclude that if $f$ is not just a homomorphism but a reduction, then $\supp$ is nontrivial.
The same could be concluded if $f$ witnesses that $E^Q_Y$ is not generically ergodic with respect to $E^P_X$.

Combining the results of this section, we get:

\begin{theorem}
Let $I$ be a countable set and $P \le S_I$ a closed subgroup with the natural action $P \curvearrowright I$.
The following are equivalent:
\begin{enumerate}
    \item $P$ classifies $=^+$;
    \item there is a Polish $P$-space $X$ such that $E^Q_{Y}$ is not generically-ergodic with respect to $E^P_X$;
    \item $P \curvearrowright I$ has a nontrivial indiscernible support function;
    \item $P \curvearrowright I$ has a nontrivial disjointifying closure operator.
\end{enumerate}
\end{theorem}

\begin{proof}
To see the fact that (1) implies (2), use the fact we just mentioned that all classes of $E^Q_{Y}$ are meager.
The fact that (2) implies (3) was the result of this last subsection, while (3) implies (4) was Proposition \ref{prop:supp_implies_cl}.

To see that (4) implies (1), we have by Corollary \ref{cor:equivalences} that if (4) holds then $P$ must involve $S_\infty$ so by Lemma \ref{lem:mackey}, $P$ classifies $=^+$ since $=^+$ is classifiable by $S_\infty$.
\end{proof}

We have now proved all of the equivalences of the main theorem.

\bibliographystyle{alpha}
\bibliography{main}
\end{document}